\theoremstyle{definition}
	\newtheorem{defn}{Definition} 
\theoremstyle{plain}
	\newtheorem{thm}[defn]{Theorem}
	\newtheorem{lem}[defn]{Lemma}
	\newtheorem{cor}[defn]{Corollary}
	\newtheorem{prop}[defn]{Proposition}
\theoremstyle{remark}
\newcommand{\N}{\mathbb N}
\newcommand{\R}{\mathbb R}
\newcommand{\dd}{\mathrm{d}}
\newcommand{\PP}{\mathbb P}
\newcommand{\EE}{\mathbb E}
\DeclareMathOperator{\Var}{Var}
\DeclareMathOperator{\sgn}{sgn}
\DeclareMathOperator{\zeros}{zeros}
\newcommand{\ind}{\mathbf{1}}
\newcommand{\Fc}{\mathcal F}
\newcommand{\Gc}{\mathcal G}
\newcommand{\Sc}{\mathcal S}
\newcommand{\Ac}{\mathcal A}
\newcommand{\Xt}{\widetilde{X}}
\newcommand{\at}{\tilde{a}}
\newcommand{\bt}{\tilde{b}}
\newcommand{\At}{\widetilde{A}}
\newcommand{\Bt}{\widetilde{B}}
\newcommand{\Ab}{\bar{A}}
\newcommand{\Bb}{\bar{B}}
\newcommand{\Xb}{\bar{X}}
\newcommand{\ah}{\hat{a}}
\newcommand{\bh}{\hat{b}}
\newcommand{\Xhat}{\hat{X}}
\newcommand{\Xhwpt}{\widehat{X}^{\mathrm{WPt}}} 
\newcommand{\Xhe}{\widehat{X}^{\mathrm{E}}} 
\newcommand{\Xaux}{\overline{X}^{\operatorname{aux}}}
\newcommand{\Xh}{\widehat X}
\newcommand{\Xm}{\widehat{X}^{\mathrm{M}}} 
\newcommand{\Xmaux}{\overline{X}^{\mathrm{M}}} 
\newcommand{\zsf}{{\mathcal G}}
\newcommand{\msf}{{\mathcal M}}
\newcommand{\Ups}{{\mathcal Y}}
\newcommand{\Upsh}{\widehat{{\mathcal Y}}}
\newcommand{\Mh}{\widehat{m}}
\newcommand{\Qb}{\overline{Q}}
\newcommand{\msfh}{\widehat{\msf}}
\newcommand{\Rh}{\widehat{R}}
\newcommand{\A}{{\mathfrak{A}}}
\newcommand{\F}{{\mathfrak{F}}}
\newcommand{\Aa}{\mathbb A}
\begin{document}

\title[Lower Error Bounds for Strong Approximation of Scalar SDEs]
	{Lower Error Bounds for Strong Approximation of Scalar SDEs with non-Lipschitzian Coefficients}

\author[Hefter]{Mario Hefter}
\address{Mario Hefter\\
		Johann Radon Institute for Computational and Applied Mathematics\\
		Austrian Academy of Sciences\\
		Altenbergerstra{\ss}e 69\\
		4040 Linz\\
		Austria}
\email{mario.hefter@ricam.oeaw.ac.at}

\author[Herzwurm]{Andr\'{e} Herzwurm}
\address{Andr\'{e} Herzwurm\\
	Fachbereich Mathematik\\
	Technische Universit\"at Kaisers\-lautern\\
	Postfach 3049\\
	67653 Kaiserslautern\\
	Germany}
\email{herzwurm@mathematik.uni-kl.de}

\author[M\"uller-Gronbach]{Thomas M\"uller-Gronbach}
\address{Thomas M\"uller-Gronbach\\
	Fakult\"at f\"ur Informatik und Mathematik\\
	Universit\"at Passau\\
	Innstra{\ss}e 33\\
	94032 Passau\\
	Germany}
\email{thomas.mueller-gronbach@uni-passau.de}

\begin{abstract}
	We study pathwise approximation of scalar stochastic differential equations
	at a single time point or globally in time by means of methods that are based on
	finitely many observations of the driving Brownian motion. We prove lower 
	error bounds in terms of the average number of evaluations of the driving Brownian motion
	that hold for every such method under rather mild assumptions on
	the coefficients of the equation. The underlying simple idea of our analysis is as follows:       
	the lower error bounds known for equations with coefficients that have 
	sufficient regularity globally in space should still apply in the case of coefficients that 
	have this regularity in space only locally, in a small neighborhood of the initial value. 
	Our results apply to a huge variety of equations with coefficients that are not globally 
	Lipschitz continuous in space 
	including Cox-Ingersoll-Ross processes, equations with superlinearly growing coefficients, and 
	equations with discontinuous coefficients. In many of these cases the resulting lower error bounds even turn out to be sharp.  
\end{abstract}

\keywords{stochastic differential equations;
	non-globally Lipschitz continuous coefficients;
	strong (pathwise) approximation;
	lower error bounds} 
\subjclass[2010]{65C30, 60H10}
 
\maketitle

\setcounter{tocdepth}{4}\setcounter{secnumdepth}{4}
\tableofcontents

\section{Introduction}\label{sec:intro}

Let $T>0$ and consider a scalar stochastic differential equation (SDE)
\begin{equation}\label{sde1}
		\dd X(t) = a(t,X(t)) \,\dd t + b(t,X(t)) \,\dd W(t), \qquad t\in [0,T],
\end{equation}
with drift coefficient $a\colon [0,T]\times \R \to\R$, diffusion coefficient $b\colon [0,T]\times \R \to\R$,
one-dimensional driving Brownian motion $W$, and initial value $X(0)$
such that \eqref{sde1} has a solution $X=(X(t))_{t\in[0, T]}$.
The computational problem we study is strong approximation of the solution $X$,
either globally on the whole time interval $[0,T]$ or at the final time $T$,
by means of methods that may use the initial value $X(0)$ and a finite number of sequentially taken evaluations $W(\tau_1),\dots,W(\tau_\nu)$ of the driving Brownian motion $W$ at times $\tau_1,\dots,\tau_\nu\in [0,T]$. 
Except for  measurability conditions we do not impose any further restrictions. The
$k$-th site $\tau_k$ may depend on the previous evaluations $X(0),W(\tau_1),\dots,W(\tau_{k-1})$, e.g., by using a path-dependent step size control,
and the total number $\nu$ of observations of $W$
may be determined by a stopping rule. Finally, the resulting discrete data may
be used in any way to generate an approximation to $X$ or to $X(T)$.
See Section~\ref{sec:methods} for a formal description of such approximations. 
Our goal is to establish lower error bounds that hold for any such method, in terms of the average number $\EE[\nu]$
of evaluations of $W$ that are used.

Lower error bounds for strong approximation of (systems of) SDEs based on evaluations of the
driving Brownian motion at finitely many times were first established in 1980 by \citet*{clark:1980}
in the particular case of strong approximation of Lev\'{y} areas.
Meanwhile, lower error bounds have extensively been studied in the case of coefficients
that are globally Lipschitz continuous in the state variable and sufficiently smooth,
see
\citet*{Ruemelin1982,CambanisHu1996,HMGR00-1,HMGR00-2,HMGR01,MG02_habil,MG02,MG04}.
Moreover, under the assumption of global Lipschitz continuity in the state variable,
lower error bound results are also available for equations with coefficients that are discontinuous in time,
see \citet*{Przybylowicz2014,Przybylowicz2015a,Przybylowicz2015b},
for stochastic delay differential equations, see \citet*{Hofmann-MuellerGronbach2006},
and for equations driven by a fractional Brownian motion, see \citet*{neuenkirch06,neuenkirch08,neuenkirch-shalaiko16}.

For SDEs with coefficients that are not globally Lipschitz continuous in the state variable
investigations on lower error bounds have started only recently. There seem to be two directions of research, up to now.
One of them consists in establishing sub-polynomial lower error bounds for particular equations
with smooth coefficients in order to come closer to a characterization of polynomial convergence  in that case,
see \citet*{hhj15,Jentzen-MG-Yaroslavtseva2016,yaroslavtseva2017,Gerencser-Jentzen-Salimova2017,MG-Yaroslavtseva2017}.
The other one aims at a thorough analysis of strong approximation of Cox-Ingersoll-Ross processes
as a prototype of  SDEs with a diffusion coefficient that is H\"older continuous in the state variable
with a H\"older exponent strictly between zero and one, see \citet*{HH16a,HH16b,HJ17}.

In the present paper we aim at scalar equations~\eqref{sde1} with coefficients $a$ and $b$
that are not globally Lipschitz continuous in the state variable and we establish lower error bounds
under rather mild assumptions on $a$ and $b$ by exploiting, essentially, the following simple idea:
it is likely that the lower error bounds known for equations with coefficients that have 
sufficient regularity globally in space still apply in the case of coefficients that 
have this regularity in space only locally, in a small neighborhood of the initial value. 

To give a flavour of our results we consider for simplicity the particular case
of an autonomous equation~\eqref{sde1}, i.e., we assume that
\begin{itemize}
	\item[(A)] $(\Omega,\F,\PP)$ is a complete probability space with a normal filtration
	$(\Fc_t)_{t\in[0,T]}$,  $W\colon [0,T]\times\Omega\to\R$
	is a standard Brownian motion on $(\Omega,\F,\PP)$ with respect to
	$(\Fc_t)_{t\in [0,T]}$,  $a\colon \R\to\R$
	and $b\colon \R\to\R$ are Borel-measurable functions, and
	$X\colon [0,T]\times\Omega\to\R$ is an $(\Fc_t)_{t\in [0,T]}$-adapted
	stochastic process with continuous sample paths such that $\PP$-a.s.~
	$\int_0^T (|a(X(t))| + |b(X(t))|^2)\,\dd t < \infty$ and for all $t\in[0,T]$ $\PP$-a.s.~
	\begin{align*}
		X(t) = X(0) + \int_0^t a(X(s))\,\dd s + \int_0^t b(X(s))\,\dd W(s),
	\end{align*}
\end{itemize}
and we restrict ourselves to approximations of the solution $X$ that are based on finitely many evaluations
of the driving Brownian motion at fixed times in $[0,T]$. Note, however, that all of the following lower error bounds also hold for approximations that use $n$ sequentially taken evaluations of the driving Brownian motion $W$ on average, see Sections~\ref{sec:ft} and~\ref{sec:global}.

We first consider strong approximation of the solution globally on $[0,T]$ with respect to the supremum-norm. The following result is an immediate consequence of Theorem~\ref{thm:sup-local}
in Section~\ref{sec:global-sup}.

\begin{thm}[$L_\infty$-approximation]\label{thm:intro-sup}
	Assume $(A)$. Let $t_0\in[0,T)$ and let $\emptyset\neq I\subseteq\R$ be an open interval such that
	\begin{enumerate}[\hspace{.5cm}(i)]\addtolength{\itemsep}{0.25\baselineskip}
		\item $a,b$ are once continuously differentiable on $I$,
		\item $\forall\,x\in I\colon\, b(x)\neq 0$,
		\item $\PP(X(t_0)\in I)>0$.
	\end{enumerate}
	Then there exist constants $c,\gamma\in(0,\infty)$ such that for all $n\in\N$, for all $s_1,\dots,s_n\in [0,T]$ 
	and for all measurable mappings $u\colon\R^{n+1}\to C([0,T])$ we have
	\begin{align*}
		\PP\bigl( \|X-u(X(0), W( s_1 ), \dots, W( s_n ) )\|_\infty\geq c\cdot \sqrt{\ln(n+1)/n} \bigr) \geq \gamma.
	\end{align*}
	In particular, we have for all $n\in\N$ that
	\begin{align*}
		\inf_{\substack{ s_1,\dots,s_n\in [0,T]\\ u\colon \R^{n+1}\to C([0,T]) \text{ measurable} }}\hspace{-0.5cm}
			\EE\bigl[ \|X - u(X(0), W( s_1 ), \dots, W( s_n ) ) \|_\infty \bigr]
			\geq c\gamma \cdot \sqrt{\ln(n+1)/n}.
	\end{align*}
\end{thm}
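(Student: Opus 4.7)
The plan is to reduce Theorem~\ref{thm:intro-sup} to the classical $L_\infty$-lower bound of order $\sqrt{\ln(n+1)/n}$ that is available for scalar SDEs with globally smooth, non-degenerate coefficients, by exploiting the fact that on a positive-probability event the process $X$ coincides, during a time window of positive length starting at $t_0$, with the solution of such a well-behaved auxiliary SDE.

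First I would shrink $I$ to a compact subinterval $[x_0,x_1]\subset I$ with $\PP(X(t_0)\in[x_0,x_1])>0$; such a subinterval exists by assumption~(iii) and inner regularity of the distribution of $X(t_0)$. I would then fix an open set $J$ with $[x_0,x_1]\subset J\subset\overline J\subset I$ and construct globally Lipschitz continuous extensions $\at,\bt\colon\R\to\R$ of $a|_{\overline J},b|_{\overline J}$ that are $C^1$ on $\R$ and satisfy $\inf_{x\in\R}|\bt(x)|>0$. This is routine because $a,b\in C^1(I)$, $\overline J$ is compact in $I$, and $b$ does not vanish on $\overline J$.

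Next I would consider the auxiliary SDE $\dd\Xt(t)=\at(\Xt(t))\,\dd t+\bt(\Xt(t))\,\dd W(t)$ started at $\Xt(t_0)=X(t_0)$ and driven by the same Brownian motion $W$. Pathwise uniqueness up to the exit from $J$ forces $X=\Xt$ on $[t_0,\tau]$, where $\tau:=\inf\{t\geq t_0\colon X(t)\notin J\}\wedge T$. Sample path continuity of $X$ allows one to choose $\delta\in(0,T-t_0]$ so small that the event $A:=\{X(t_0)\in[x_0,x_1],\ \tau\geq t_0+\delta\}$ has $\PP(A)>0$. Then for every choice of $s_1,\dots,s_n\in[0,T]$, every measurable $u\colon\R^{n+1}\to C([0,T])$ and every $\omega\in A$ one has
\begin{equation*}
\|X-u(X(0),W(s_1),\dots,W(s_n))\|_\infty\geq \sup_{t\in[t_0,t_0+\delta]}\bigl|\Xt(t)-u(X(0),W(s_1),\dots,W(s_n))(t)\bigr|.
\end{equation*}

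Conditioning on $\Fc_{t_0}$ and on $A$ reduces the right-hand side to strong $L_\infty$-approximation of the time-shifted auxiliary SDE on $[0,\delta]$ from the Brownian increments $W(t_0+\cdot)-W(t_0)$, to which the classical $\sqrt{\ln(n+1)/n}$-lower bound applies. Absorbing the resulting constants into $c,\gamma$ yields the probability statement, and the integral version follows via Markov's inequality. The main technical difficulty will lie exactly in this last reduction: the classical lower bound is stated for deterministic initial values and a fixed deterministic coefficient set, whereas here the starting point of $\Xt$ is the $\Fc_{t_0}$-measurable random variable $X(t_0)$, the available data comprise $X(0)$ together with evaluations of $W$ on all of $[0,T]$ rather than just the relevant Brownian increments on $[t_0,t_0+\delta]$, and everything is restricted to the potentially small event $A$. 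Handling this via a careful conditioning argument on $\Fc_{t_0}$ and on $A$, together with averaging over $X(t_0)\in[x_0,x_1]$ and invoking the classical bound for each fixed realization of $(X(0),X(t_0))$, should yield uniform positive constants $c,\gamma$.
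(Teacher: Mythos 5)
Your overall strategy---localize the coefficients to a compact subinterval of $I$, extend them to globally Lipschitz, non-degenerate functions, and transfer a lower bound for the resulting well-behaved auxiliary equation back to $X$ on a positive-probability event---is exactly the strategy of the paper (Proposition~\ref{prop:sup-global}, Proposition~\ref{prop:sup-local}, Theorem~\ref{thm:sup-local}, via the comparison results in Appendix~\ref{sec:comparison-result}). However, the step you yourself flag as ``the main technical difficulty'' is not a routine conditioning argument; it is the place where the known results are genuinely insufficient, and your proposed way around it does not work.

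The gap is this: the ``classical $\sqrt{\ln(n+1)/n}$-lower bound'' you invoke is a lower bound on the \emph{mean} error $\EE[\|\Xt-\Xh_n\|_\infty]$. A mean-error lower bound gives no control over the error restricted to an event $A$ of small positive probability --- the error of a cleverly chosen method could in principle be tiny on $A$ and large only on $A^c$. Moreover, you cannot ``condition on $A$'' and then apply the classical bound, because $A=\{X(t_0)\in[x_0,x_1],\,\tau\geq t_0+\delta\}$ depends on the path of $W$ after $t_0$ through the exit time $\tau$; conditioned on $A$, the increments $W(t_0+\cdot)-W(t_0)$ are no longer a Brownian motion, so no off-the-shelf lower bound for the auxiliary SDE applies to the conditioned process. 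What is actually needed --- and what constitutes the main new ingredient of the paper --- is an upgrade of the classical mean-error bounds to statements of the form $\PP\bigl(\|\Xt-\Xh_n\|_\infty\geq c\sqrt{\ln(n+1)/n}\bigr)\geq 1-\varepsilon$ for \emph{every} $\varepsilon\in(0,1)$ (Proposition~\ref{prop:sup-global}). This requires reworking the proof: one analyzes the residual of the continuous-time Euler scheme conditionally on the observed data, identifies independent conditionally Gaussian contributions coming from Brownian bridges on at least $k/2$ unobserved subintervals, and applies the Anderson inequality together with a quantitative bound on the maximum of independent Gaussians (Lemma~\ref{lem:normal2}). Once one has the ``probability $\geq 1-\varepsilon$'' version, the transfer to $X$ is done not by conditioning on $A$ but by the elementary intersection bound $\PP(E\cap A)\geq\PP(E)+\PP(A)-1$, with $\varepsilon$ chosen smaller than $\PP(A)$. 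Your write-up asserts the conclusion of this strengthening (``should yield uniform positive constants $c,\gamma$'') without supplying the argument, and the route you sketch (conditioning on $\Fc_{t_0}$ and on $A$, then citing the classical bound) would fail for the two reasons above. As a secondary point, your use of sample-path continuity to get $\PP(\tau\geq t_0+\delta)>0$ is a legitimate and somewhat simpler substitute for the paper's Girsanov/time-change argument (Lemma~\ref{lem:time-change}) for producing a positive-probability coincidence window, but it does not address the main gap.
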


Under slightly stronger smoothness assumptions on the coefficients $a$ and $b$
we obtain lower bounds for the error with respect to the $L_1([0,T])$-norm (denoted by $\|\cdot\|_1$),
which in turn implies a lower bound for the maximum pointwise approximation error.
The following result is an immediate consequence of Theorem~\ref{thm:Lp-local} in Section~\ref{sec:global-Lp}
and Theorem~\ref{thm:uniform-pointwise} in Section~\ref{sec:global-sup-outside}.

\begin{thm}[$L_1$-approximation \& maximum pointwise error]\label{thm:intro-Lp}
	Assume $(A)$. Let $t_0\in[0,T)$ and let $\emptyset\neq I\subseteq\R$ be an open interval such that
	\begin{enumerate}[\hspace{.5cm}(i)]\addtolength{\itemsep}{0.25\baselineskip}
		\item $a,b$ are twice continuously differentiable on $I$,
		\item $\forall\,x\in I\colon\, b(x)\neq 0$,
		\item $\PP(X(t_0)\in I)>0$.
	\end{enumerate}
	Then there exist constants $c,\gamma\in(0,\infty)$ such that for all $n\in\N$, for all $s_1,\dots s_n\in [0,T]$ 
	and for all measurable mappings $u\colon\R^{n+1}\to L_1([0,T])$ we have
	\begin{align*}
		\PP\bigl( \|X-u(X(0), W( s_1 ), \dots, W( s_n ) )\|_1 \geq c/\sqrt{n} \bigr) \geq \gamma.
	\end{align*}
	In particular, we have for all $n\in\N$ that
	\begin{align*}
		\inf_{\substack{ s_1,\dots,s_n\in [0,T]\\ u\colon \R^{n+1}\to L_1([0,T]) \text{ measurable} }}\hspace{-0.5cm}
			\EE\bigl[ \|X - u(X(0), W( s_1 ), \dots, W( s_n ) ) \|_1 \bigr]
			\geq c\gamma\cdot n^{-1/2}
	\end{align*}
	and
	\begin{align*}
		\inf_{\substack{ s_1,\dots,s_n\in [0,T]\\ u\colon \R^{n+1}\to C([0,T]) \text{ measurable} }}
			\biggl\{ \sup_{t\in[0,T]} \EE\Bigl[\bigl|X(t) - \bigl(u(X(0),W(s_1),\dots,W(s_n))\bigr)(t) \bigr|\Bigr] \biggr\}
			\geq c\gamma/T\cdot n^{-1/2}.
	\end{align*}
\end{thm}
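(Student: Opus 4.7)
My plan is to derive Theorem~\ref{thm:intro-Lp} as a direct corollary of the two more general results cited in the statement: Theorem~\ref{thm:Lp-local} (the probabilistic $L_1$-bound, which handles sequential adaptive sampling) and Theorem~\ref{thm:uniform-pointwise} (the maximum pointwise bound). The probability bound would come straight from \ref{thm:Lp-local}, the expected $L_1$-bound from Markov's inequality, and the pointwise bound either from \ref{thm:uniform-pointwise} directly or from a short Fubini reduction.

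\textbf{Step 1 (probability bound).} I would specialize Theorem~\ref{thm:Lp-local} to the non-adaptive regime considered here, i.e.\ fixed sites $s_1,\dots,s_n\in[0,T]$ and measurable $u\colon\R^{n+1}\to L_1([0,T])$. The hypotheses $(\mathrm{i})$--$(\mathrm{iii})$ match exactly, so the first displayed inequality follows at once. I expect the real technical content to sit inside the proof of \ref{thm:Lp-local}: restrict attention to the positive-probability event $\{X(t_0)\in I\}$, couple $X$ on a short window $[t_0,t_0+\delta]$ with the solution $\Xt$ of an SDE whose coefficients are a globally Lipschitz $C^2$-extension of $a,b$ from a compact subinterval $J\subset I$ on which $|b|$ is bounded away from zero (which is possible by continuity and non-vanishing of $b$ on $I$), and then invoke the classical $n^{-1/2}$ lower bound for $\Xt$ from the globally Lipschitz theory referenced in the introduction. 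The main obstacle that I anticipate is controlling the exit time $\inf\{s\geq t_0:X(s)\notin J\}$ so that $X=\Xt$ on $[t_0,t_0+\delta]$ holds with positive probability uniformly in $n$; this is where the $C^2$-smoothness on $I$ and the non-vanishing of $b$ on $I$ enter crucially. All of this lives in \ref{thm:Lp-local}, not in the present derivation.

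\textbf{Step 2 (expected $L_1$-error).} With the probability bound in hand, Markov's inequality gives, for every admissible $(s_1,\dots,s_n,u)$,
\begin{align*}
\EE\bigl[\|X-u(X(0),W(s_1),\dots,W(s_n))\|_1\bigr]\geq \tfrac{c}{\sqrt{n}}\cdot\PP\bigl(\|X-u(\cdot)\|_1\geq\tfrac{c}{\sqrt{n}}\bigr)\geq \tfrac{c\gamma}{\sqrt{n}},
\end{align*}
and taking the infimum over $(s_1,\dots,s_n,u)$ yields the second display.

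\textbf{Step 3 (maximum pointwise error).} Either invoke Theorem~\ref{thm:uniform-pointwise} directly, or argue by Fubini, which is consistent with the factor $1/T$ on the right-hand side: any measurable $u\colon\R^{n+1}\to C([0,T])$ is also measurable as a mapping into $L_1([0,T])$, and joint measurability of $(t,\omega)\mapsto (u(\ldots))(t)$ gives
\begin{align*}
T\cdot\sup_{t\in[0,T]}\EE\bigl[|X(t)-(u(\ldots))(t)|\bigr]\geq\int_0^T\EE\bigl[|X(t)-(u(\ldots))(t)|\bigr]\,\dd t=\EE\bigl[\|X-u(\ldots)\|_1\bigr]\geq\tfrac{c\gamma}{\sqrt{n}}.
\end{align*}
Since the $C([0,T])$-valued measurable mappings form a subclass of the $L_1([0,T])$-valued ones, the Step~2 bound applies, and taking the infimum over $(s_1,\dots,s_n,u)$ yields the pointwise lower bound $c\gamma/(T\sqrt{n})$. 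No further work is required beyond \ref{thm:Lp-local}, Markov, and Fubini.
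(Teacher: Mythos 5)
Your proposal is correct and follows essentially the same route as the paper, which states explicitly that Theorem~\ref{thm:intro-Lp} is an immediate consequence of Theorem~\ref{thm:Lp-local} (specialized to the non-adaptive subclass and to autonomous coefficients, for which the hypotheses (i)--(iii) indeed instantiate (C1*)--(C3*)) together with Theorem~\ref{thm:uniform-pointwise}. Your Step~2 is the same mechanism by which the paper passes from the probability bound to the mean-error bound, and your Step~3 Fubini argument is verbatim the paper's proof of Theorem~\ref{thm:uniform-pointwise}.
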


Finally, we consider strong approximation of the solution at the final time.
The following result is an immediate consequence of Theorem~\ref{thm:ft-local} in Section~\ref{sec:ft-local}.

\begin{thm}[Pointwise approximation]\label{thm:intro-ft}
	Assume $(A)$. Let $t_0\in[0,T)$ and let $\emptyset\neq I\subseteq\R$ be an open interval such that
	\begin{enumerate}[\hspace{.5cm}(i)]\addtolength{\itemsep}{0.25\baselineskip}
		\item\label{thm3parti} $a,b$ are three times continuously differentiable on $I$,
		\item\label{thm3partii} $\forall\,x\in I\colon\, b(x) \neq 0 \text{ and } 
			\bigl(a'b - a\:\!b' - \frac{1}{2} b^2b''\bigr)(x) \neq 0$,
		\item\label{thm3partiii} $\PP(X(t_0)\in I)>0$.
	\end{enumerate}
	Then there exist constants $c,\gamma\in(0,\infty)$ such that for all $n\in\N$, for all $s_1,\dots s_n\in [0,T]$
	and for all measurable mappings $u\colon\R^{n+1}\to\R$ we have
	\begin{align*}
		\PP\bigl( |X(T)-u(X(0), W( s_1 ), \dots, W( s_n ) ) |\geq c/n \bigr) \geq \gamma.
	\end{align*}
	In particular, we have for all $n\in\N$ that
	\begin{align*}
		\inf_{\substack{ s_1,\dots,s_n\in [0,T]\\ u\colon\R^{n+1}\to\R \text{ measurable} }}
			\EE\bigl[ |X(T) - u(X(0), W( s_1 ), \dots, W( s_n ) ) | \bigr]
			\geq c\gamma\cdot n^{-1}.
	\end{align*}
\end{thm}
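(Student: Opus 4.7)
The plan is to reduce the theorem to the classical pointwise lower bound of Hofmann, M\"uller-Gronbach, and Ritter for strong approximation of smooth, globally Lipschitz scalar SDEs, following the philosophy announced in the abstract: global regularity assumptions in the classical theorem can be replaced by local regularity in a small neighbourhood of an initial position, provided one couples the given process with an auxiliary process whose coefficients agree with $a,b$ on that neighbourhood but are globally well-behaved.

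First, using condition~(\ref{thm3partiii}) and the openness of $I$, I would pick $x_0\in I$ and $r>0$ with $[x_0-2r,x_0+2r]\subset I$ and $\PP(X(t_0)\in(x_0-r,x_0+r))>0$. Conditions~(\ref{thm3parti}) and~(\ref{thm3partii}) together with continuity imply that $a,b$ and their derivatives up to order three are bounded on $J:=[x_0-2r,x_0+2r]$, and that both $|b|$ and $|a'b-a\:\!b'-\frac{1}{2}b^2b''|$ are bounded away from zero on $J$. Using a standard cut-off/mollification, I would then extend $a|_J$ and $b|_J$ to bounded $C^3$ functions $\at,\bt\colon\R\to\R$ with bounded derivatives such that $\inf_\R|\bt|>0$ and $\at'\bt-\at\:\!\bt'-\frac{1}{2}\bt^2\bt''$ is bounded away from zero on $J$. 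Let $\Xt$ be the unique strong solution on $[t_0,T]$ of
\[
\dd\Xt(t)=\at(\Xt(t))\,\dd t+\bt(\Xt(t))\,\dd W(t),\qquad \Xt(t_0)=X(t_0),
\]
which exists because $\at,\bt$ are globally Lipschitz. On the event $E:=\{X(t_0)\in(x_0-r,x_0+r)\}\cap\{\sup_{t\in[t_0,T]}|\Xt(t)-X(t_0)|\le r\}$, the path $\Xt$ stays in $J\subset I$ throughout $[t_0,T]$, and local pathwise uniqueness for~\eqref{sde1} on $I$ forces $X\equiv\Xt$ on $[t_0,T]$; small-ball estimates for $\Xt$ combined with $\PP(X(t_0)\in(x_0-r,x_0+r))>0$ yield $\PP(E)>0$.

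Next I would invoke the classical pointwise lower bound for $\Xt$: under the global $C^3$-smoothness with bounded derivatives, the global Lipschitz property, $\inf|\bt|>0$, and the non-vanishing of $\at'\bt-\at\:\!\bt'-\frac{1}{2}\bt^2\bt''$ on a set of positive $\Xt(T)$-mass, there exist $\tilde c,\tilde\gamma>0$ such that for every $n\in\N$, every $s_1,\dots,s_n\in[t_0,T]$, and every measurable $u\colon\R^{n+1}\to\R$,
\[
\PP\bigl(|\Xt(T)-u(X(t_0),W(s_1),\dots,W(s_n))|\ge\tilde c/n\bigr)\ge\tilde\gamma.
\]
Intersecting with $E$, on which $X(T)=\Xt(T)$, gives the desired lower bound for $X(T)$ with $c=\tilde c$ and $\gamma$ proportional to $\tilde\gamma\cdot\PP(E)$.

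The main obstacle is the transfer step: the approximations permitted in the statement are measurable functions of $X(0)$ and of Brownian evaluations $W(s_1),\dots,W(s_n)$ at arbitrary $s_i\in[0,T]$, whereas the classical lower bound is naturally phrased with the $\Fc_{t_0}$-measurable initial value $\Xt(t_0)=X(t_0)$ and with evaluation sites in $[t_0,T]$. I would handle this by applying the Markov property of $W$ at $t_0$: the information $(X(0),\,W(s_i)\colon s_i<t_0)$ is $\Fc_{t_0}$-measurable and independent of the increments $W(s_i)-W(t_0)$ for $s_i\ge t_0$, so it can be absorbed into a new measurable estimator depending only on $X(t_0)$ and those future increments. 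A conditional version of the classical lower bound (or an integration of its unconditional form over $\Fc_{t_0}$) then yields the required inequality. Setting up this conditioning rigorously, and verifying that the extension $(\at,\bt)$ genuinely satisfies the hypotheses of the classical theorem on a set large enough for its proof to apply, is the technically most delicate part of the argument.
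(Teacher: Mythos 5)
Your overall strategy---localize the coefficients to a compact subinterval of $I$, extend them to globally smooth, globally Lipschitz, uniformly elliptic coefficients, couple $X$ with the solution $\Xt$ of the modified equation on a positive-probability event, and import a lower bound for the nice equation---is exactly the paper's strategy (Proposition~\ref{prop:ft-local} built on Proposition~\ref{prop:localization}). But the final transfer step, as you describe it, does not work, and the reason it does not work is precisely what forces the paper to prove Proposition~\ref{prop:ft-global}. First, the ``classical pointwise lower bound'' of Hofmann/M\"uller-Gronbach/Ritter and \citet{MG04} is a lower bound on the \emph{mean} error $\EE[|\Xt(T)-u(\cdots)|]\geq \tilde c/n$; it does not yield $\PP(|\Xt(T)-u(\cdots)|\geq\tilde c/n)\geq\tilde\gamma$ for a constant $\tilde\gamma>0$ uniform over all measurable $u$, because a lower bound on the first moment gives no lower bound on tail probabilities without uniform control of higher moments of the error, which is unavailable when $u$ ranges over all measurable maps. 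The paper explicitly notes that such probability bounds ``have not been established in this generality in the literature so far''; establishing them is the content of Proposition~\ref{prop:ft-global}, whose proof reworks the MG04 machinery so that the conditional law of the error given the data is Gaussian with a quantifiable variance, and then applies the Anderson-type estimate of Lemma~\ref{lem:normal1}.

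Second, even granting a probability version with some fixed $\tilde\gamma>0$, your conclusion ``$\gamma$ proportional to $\tilde\gamma\cdot\PP(E)$'' is false: the events $\{|\Xt(T)-u|\geq\tilde c/n\}$ and $E$ are not independent, and the only generally valid bound is $\PP(A\cap E)\geq\PP(A)+\PP(E)-1=\tilde\gamma+\PP(E)-1$, which is nonpositive whenever $\tilde\gamma\leq 1-\PP(E)$. Since $\PP(E)$ may be arbitrarily small, you need the probability bound for the nice equation to hold with probability $1-\varepsilon$ for \emph{every} $\varepsilon\in(0,1)$ (with $c$ depending on $\varepsilon$), and then choose $\varepsilon=\PP(E)/2$; this is exactly how the paper combines \eqref{eq:ft-prob1} and \eqref{eq:ft-prob2}. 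A smaller but still real gap is your claim that $\PP(E)>0$ follows from ``small-ball estimates'': since $\{X(t_0)\in(x_0-r,x_0+r)\}$ and $\{\sup_t|\Xt(t)-X(t_0)|\leq r\}$ are not independent, one needs a conditional confinement estimate, which the paper obtains via a Dambis--Dubins--Schwarz time change plus Girsanov (Lemma~\ref{lem:time-change}). Your handling of the $[0,t_0]$-observations via the Markov property at $t_0$ is fine in spirit and corresponds to the paper's device of letting methods use all of $\Fc_0$ after shifting time.
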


We stress that up to now the lower bounds on the mean errors in Theorems~\ref{thm:intro-sup}-\ref{thm:intro-ft}
were known to hold only under assumptions on the coefficients $a$ and $b$ and the initial value $X(0)$
that are much stronger than the conditions (i)-(iii) used in the above theorems.
For instance, all derivatives of $a$ and $b$,
which appear in Theorems~\ref{thm:intro-sup}-\ref{thm:intro-ft}, are typically assumed to exist on the whole real line
and to be bounded and the initial value $X(0)$ is required to satisfy a moment condition, see, e.g.,
\citet*{MG02,MG04,MG02_habil} for further details and references.
Moreover, Theorems~\ref{thm:intro-sup}-\ref{thm:intro-ft} provide error bounds that hold with positive probability, uniformly in $n$.
To the best of our knowledge such error estimates have not been established in this generality in the literature so far. 

The second condition in Assumption~\eqref{thm3partii} in Theorem~\ref{thm:intro-ft} requires some motivation,
which we take from a related discussion in \citet*{MG04}.
First note that the function $a'b-ab'-b^2 b''/2$
is the Lie bracket $[\widetilde a, b]$, where $\widetilde a = a-b b'/2$ is the drift coefficient of the Stratonovich equation
corresponding to the It\^{o} coefficients $a$ and $b$. By a general result of \citet*[Theorem~2.1]{Yamato1979},
which links the representability of the solution $X$ in terms of multiple It\^{o} integrals to the nilpotent property
of the Lie algebras associated with $\widetilde a$ and $b$, it follows that $X(T)= u(X(0),W(T))$ for some measurable function $u\colon\R^2\to\R$ if $\widetilde a, b\in C^\infty(\R)$ and $[\widetilde a, b]= 0$.
A special example is provided by a geometric Brownian motion, where $a(x) = \alpha x$, $b(x) = \beta x$ with $\alpha,\beta\in\R$, and $X(T) = X(0) \exp((\alpha-\beta^2/2)T +\beta W(T))$.
In particular, $X(T)$ can then be approximated with error zero based only on $X(0)$ and $W(T)$
and the lower error bound in Theorem~\ref{thm:intro-ft} cannot hold.
Thus, roughly speaking, the second condition in~\eqref{thm3partii} in Theorem~\ref{thm:intro-ft} (and its generalized version for non-autonomous equations in Proposition~\ref{prop:ft-global}) excludes
trivial approximation problems for SDEs such as a geometric Brownian motion or  
a one-dimensional squared Bessel process, see Section~\ref{sec:cir}.

Theorems~\ref{thm:intro-sup}-\ref{thm:intro-ft} yield lower error bounds for a huge variety of SDEs
including Cox-Ross-Ingersoll processes, see Section~\ref{sec:cir},
equations with superlinearly growing coefficients, see Section~\ref{sec:superlinear},
and equations with discontinuous coefficients, see Section~\ref{sec:discontinuous}.
In many of these cases these lower error bounds turn out to be sharp.  
Here, we illustrate our results by considering an SDE with a superlinearly growing drift coefficient, namely
\begin{align}\label{sde2}
	\begin{aligned}
		\dd X(t) &= -\bigl(X(t)\bigr)^5\,\dd t + X(t)\,\dd W(t), \qquad t\in [0,T],\\
		X(0) &= x_0\in \R\setminus\{0\}.
	\end{aligned}
\end{align}
Note that~\eqref{sde2} has a unique strong solution since both coefficients are locally Lipschitz continuous
and jointly satisfy a suitable monotone condition, see, e.g., \citet*{Mao2008}.
Clearly, $(a'b-ab'-b^2 b''/2)(x)=-4x^5$ for all $x\in\R$, so that all of the assumptions
in Theorems~\ref{thm:intro-sup}-\ref{thm:intro-ft} are satisfied
and therefore all of the respective lower error bounds hold true.
On the other hand, a tamed version of the equidistant Milstein scheme achieves the upper bound $1/n$, up to a constant,
for the error at the final time in the $p$-th mean, for any $p\in[1,\infty)$, see \citet*{Kumar-Sabanis2016},
while the piecewise linear interpolation of a tamed version of the Euler scheme achieves the upper bounds $n^{-1/2}$ and $\sqrt{\ln (n+1) /n}$, up to a constant each,
for the maximum pointwise error in the $p$-th mean (and thus the pathwise $L_p$-error in the $p$-th mean) and for the uniform error in the $p$-th mean, respectively,
for any $p\in[1,\infty)$, see \citet*{HJK12} and \citet*{Hutzenthaler-Jentzen-Kloeden2013}, respectively. Hence all of the lower error bounds from Theorems~\ref{thm:intro-sup}-\ref{thm:intro-ft}
are sharp and both methods just mentioned perform asymptotically optimal for equation \eqref{sde2}.

The present paper only addresses scalar SDEs and strong approximations based on evaluations of the driving Brownian motion
at single times in $[0,T]$. However, our proof techniques can also be applied in the case of systems of SDEs
and approximations based on bounded linear functionals of the driving Brownian motion or iterated It\^{o} integrals,
and we therefore expect an analogue transfer of the lower error bounds known for such methods in the case of systems of SDEs
with coefficients that are globally Lipschitz with respect to the state variable, see, e.g.,
\citet*{MG02_habil,Hofmann-MG-Ritter2002,Hofmann-MG2004}, to the case of systems of SDEs with coefficients
that behave sufficiently well with respect to the state variable only in a neighborhood of the initial value.

Our paper is organized as follows.
In Section~\ref{sec:notation} and Section~\ref{sec:setting} we introduce some notation and fix the setting with respect to equation~\eqref{sde1}, respectively.
In Section~\ref{sec:methods} we thoroughly explain what kind of approximation methods are covered by
the lower error bounds. In Section~\ref{sec:ft} we establish lower error bounds for pointwise approximation whereas  Section~\ref{sec:global} provides lower error bounds with respect to global approximation. In Section~\ref{sec:examples} we then use the results from the latter two sections to obtain lower error bounds for strong approximation of Cox-Ingersoll-Ross processes, equations with superlinearly growing coefficients,
and equations with discontinuous coefficients. The proofs of our lower error bounds rely on a localization technique, which provides a link between equations with coefficients that are globally Lipschitz in space and equations with coefficients that behave well only in a small neighbourhood of the initial value. This tool as well as a number of needed properties of Gaussian distributions are provided in an appendix.

\section{Notation}\label{sec:notation}

For $T\in(0,\infty)$ and $p\in [1,\infty)$ we use $L_p([0,T])$ to denote the Banach space
of all Borel-measurable functions $f\colon [0,T]\to\R$ with $\|f\|_p=\bigl(\int_0^T |f(t)|^p\,\dd t \bigr)^{1/p} < \infty$,
where two functions are identified if they coincide Lebesgue-almost everywhere on $[0,T]$.
By $C([0,T])$ we denote the Banach space of all continuous functions $f\colon [0,T]\to\R$
equipped with the norm $\|f\|_\infty = \sup_{t\in[0,T]}|f(t)|$. Furthermore, for $d\in\N$,
$x\in\R^d$, and $p\in [1,\infty)$ we use $|x|_p= (\sum_{i=1}^d|x_i|^p)^{1/p}$ to denote
the $p$-norm of $x$ and we use $|x|_\infty=\max_{i=1,\dots,d}|x_i|$ to denote the maximum norm of $x$.

\section{Setting}\label{sec:setting}

Throughout this article we fix the following scenario of a scalar SDE.
Let $(\Omega,\F,\PP)$ be a complete probability space with a normal filtration
$(\Fc_t)_{t\in[0,\infty)}$ and let $W\colon [0,\infty)\times\Omega\to\R$
be a standard Brownian motion on $(\Omega,\F,\PP)$ with respect to
$(\Fc_t)_{t\in [0,\infty)}$. Let $T\in (0,\infty)$, let $a\colon [0,T]\times\R\to\R$
and $b\colon [0,T]\times\R\to\R$ be Borel-measurable functions, and
let $X\colon [0,T]\times\Omega\to\R$ be an $(\Fc_t)_{t\in [0,T]}$-adapted
stochastic process with continuous sample paths such that $\PP$-a.s.~
$\int_0^T (|a(t,X(t))| + |b(t,X(t))|^2)\,\dd t < \infty$ and for all $t\in[0,T]$ $\PP$-a.s.~
\begin{align}\label{eq:SDE}
	X(t) = X(0) + \int_0^t a(s,X(s))\,\dd s + \int_0^t b(s,X(s))\,\dd W(s).
\end{align}

\section{The Class of Methods}\label{sec:methods}

Consider the setting in Section~\ref{sec:setting} and fix $n\in\N$.
We formally explain what we mean by an approximation of $X(T)$ or of $X$ that is based on the initial value $X(0)$
and on at most $n$ sequential evaluations of the Brownian motion $W$ at points in $[0,\infty)$ on average.

To this end we consider, more generally, a measurable space $(S,\Sc)$ 
and we introduce the class $\Ac_n(S,X(0),W)$
of all $\F$-$\Sc$-measurable mappings $V\colon \Omega \to S$ that can be constructed using $X(0)$
and at most $n$ sequential evaluations of the Brownian motion $W$ in $[0,\infty)$ on average.
In Section~\ref{sec:ft} we take $S=\R$ to study approximation of $X(T)$.
In Section~\ref{sec:global} we take $S=L_p([0,T])$ or  $S=C([0,T])$ to study
approximation of the whole process $X$. In either of these cases $\Sc$ is taken to be the
Borel $\sigma$-field generated by the respective canonical norm.

Every random variable $V\in \Ac_n(S,X(0),W)$ is determined by three sequences
\[
\psi = (\psi_k)_{k\in\N}, \quad \chi = (\chi_k)_{k\in\N}, \quad \varphi = (\varphi_k)_{k\in\N}
\] 
of measurable mappings
\begin{align*}
	\psi_k\colon & \R^k \to [0,\infty),\\
	\chi_k\colon & \R^{k+1} \to \{\text{STOP},\text{GO}\},\\
	\varphi_k\colon & \R^{k+1} \to S. 
\end{align*}
The sequence $\psi$ is used to determine the sequential evaluation sites for the Brownian motion $W$ in $[0,\infty)$.
The sequence $\chi$ determines when to stop the evaluation of $W$. The sequence $\varphi$ is used to obtain
the outcome of $V$ in $S$ once the evaluation of $W$ has stopped.

To be more precise, let $x\in\R$ be a possible realization of $X(0)$ and let $w\in C([0,\infty))$ be a possible realization of $W$.
We put $D_0(x,w)=x$, and for $k\in \N$ we recursively define 
\[
\tau_k(x,w) = \psi_k(D_{k-1}(x,w)), \quad Y_k(x,w)=w(\tau_k(x,w)),\quad D_k(x,w) = (D_{k-1}(x,w),Y_k(x,w)).
\] 
Thus $\tau_k(x,w)\in [0,\infty)$ is the $k$-th evaluation node for the actual path $w$ of $W$ and $Y_k(x,w)\in\R$
is the corresponding evaluation of $w$. The vector $D_k(x,w)\in \R^{k+1}$ contains the available data
about $x$ and $w$ after $k$ steps and we decide whether to stop or to go on with the sequential evaluation of $w$
according to the value of $\chi_k(D_{k}(x,w))$. The total number of evaluations of $w$ is thus given by
\[
\nu(x,w) = \min\{k\in\N\colon\, \chi_k(D_k(x,w)) = \text{STOP}\}\in \N\cup\{\infty\}.
\]
We require that the triple of sequences $(\psi,\chi,\varphi)$ satisfies $\PP(\nu(X(0),W)< \infty)=1$
and we define $V\colon\Omega\to S$ to be a random variable that satisfies $\PP$-a.s.
\begin{align}\label{eq:alg}
	V= \varphi_{\nu(X(0),W)}(D_{\nu(X(0),W)}(X(0),W)).
\end{align}
Now we put
\begin{align*}
	\Ac_n(S,X(0),W) = \{V\colon \Omega \to S\colon\, V \text{ is of the form \eqref{eq:alg} with }\EE[\nu(X(0),W)] \le n\}.
\end{align*}

\bigskip
The class $\Ac_n(S,X(0),W)$ is called the class of adaptive methods (with varying cardinality).
For $V\in\Ac_n(S,X(0),W)$, the corresponding average number $\EE[\nu(X(0),W)]$ of evaluation nodes of the
Brownian motion $W$ can be seen as a rough measure for the computational cost of the method $V$.
Prominent examples of such adaptive methods are Euler- or Milstein-type methods with a step size control
where the average number of evaluation nodes is bounded by $n$, see, e.g.,
\citet*{BHB04,Maut98,KSTZ05,GL97,HMGR00-1,HMGR00-2,HMGR01,MG02_habil,MG02,MG04}.

An important subclass  of $\Ac_n(S,X(0),W)$ is the class $\Ac^\mathrm{eq}_n(S,X(0),W)$ of all  methods that are based on $n$ evaluations of $W$ at the equidistant times $kT/n$, $k=0,1,\dots,n$. Formally, we have  $\chi_k=\text{GO}$ for $k< n$, $\chi_n=\text{STOP}$ and $\psi_k=Tk/\nu$ for $k\le n$. Thus
\[
\Ac^\mathrm{eq}_n(S,X(0),W) =\{u(X(0),W(T/n),W(2T/n),\dots,W(T))\colon u\colon \R^{n+1}\to S \text{ is measurable}\}
\]
and we have $\Ac^\mathrm{eq}_n(S,X(0),W)\subseteq \Ac_n(S,X(0),W)$. 
The lower error bounds established in Sections~\ref{sec:ft} and~\ref{sec:global} hold for any method from the class $\Ac_n(S,X(0),W)$ for the respective choice of $S$. In Section~\ref{sec:examples} we will see that matching upper error bounds are often (but not always) achieved by methods from the class $\Ac^\mathrm{eq}_n(S,X(0),W)$.

\bigskip
For technical reasons we introduce a further class of algorithms.
By $\Aa_n(S,W)$ we denote  the set of all random variables $V\colon\Omega\to S$
that are measurable with respect the $\sigma$-algebra generated by $\Fc_0,W(\tau_1),\dots, W(\tau_n)$,
where $\tau_1,\dots,\tau_n\colon \Omega\to [0,\infty)$ are any random variables
such that for all $k=1,\dots,n$ the random variable $\tau_k$ is measurable
with respect to the $\sigma$-algebra generated by $\Fc_0,W(\tau_1),\dots, W(\tau_{k-1})$.

\section{Lower Error Bounds for Strong Approximation at a Single Point}\label{sec:ft}

In this section we consider strong approximation of $X(T)$. In view of Section~\ref{sec:methods},
we study lower error bounds for approximation methods $\Xh_n(T)$ belonging to the class $\Ac_n(\R,X(0),W)$.
In Section~\ref{sec:ft-global}, we extend results from \citet*{MG04} and prove probability bounds on the error of $\Xh_n(T)\in \Aa_n(\R,W)$ under rather restrictive global smoothness assumptions on the coefficients $a$ and $b$ of the SDE~\eqref{eq:SDE}. In Section~\ref{sec:ft-local}, we switch to rather mild local smoothness assumptions on $a$ and $b$ and prove our main result on lower error bounds for pointwise approximation, see Theorem~\ref{thm:ft-local}.

\subsection{Global Assumptions}\label{sec:ft-global}

The following proposition shows that for sufficiently smooth coefficients $a$ and $b$, the probability for an adaptive method $\Xh_n(T)\in \Aa_n(\R,W)$ of having an error at least of order one is arbitrarily close to one.

\begin{prop}[Probability bounds on the pointwise error of adaptive methods I]\label{prop:ft-global}
	Assume the setting in Section~\ref{sec:setting} with $\EE{\left[|X(0)|^{16}\right]}<\infty$.
	Assume further that there exists a Borel set $I\subseteq\R$ such that
	\begin{enumerate}[\hspace{.5cm}({A}1)]\addtolength{\itemsep}{0.25\baselineskip}
		\item\label{cond:regularity}
			$\forall\, i\in\{0,1,2\},\, j\in\{0,1,2,3\}\colon\,$the partial derivatives $a^{(i,j)}, b^{(i,j)}$
			exist on $[0,T]\times \R$ and are continuous and bounded,
		\item\label{cond:ft} $\forall\,x\in I\colon\, {\left(a^{(0,1)}\,b - b^{(1,0)} -
			a\,b^{(0,1)} - \frac{1}{2} b^2 b^{(0,2)}\right)} (0,x) \neq 0$,
		\item\label{cond:init} $\PP(X(0)\in I) =1$.
	\end{enumerate}
	Then for every $ \varepsilon\in (0,1)$ there exists $ c\in (0,\infty)$ such that
	for all $n\in\N$ and for all $\Xh_n(T)\in\Aa_n(\R,W)$ we have
	\begin{align*}
		\PP \bigl(|X(T)-\Xh_n(T)|\geq c/n\bigr) \geq 1-\varepsilon.
	\end{align*}
\end{prop}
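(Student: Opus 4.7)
The plan is to adapt the pointwise lower bound technique of \citet*{MG04}, which is there stated for $L_2$-errors, and strengthen it to a probability lower bound that is uniform in $n$. The core idea is to isolate in $X(T)$ an Itô-Taylor residual whose conditional distribution, given any adaptive observation of $W$ at $\mathcal{O}(n)$ sites, is anti-concentrated on the scale $1/n$, the necessary non-degeneracy being provided by condition~\eqref{cond:ft}.

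First I would reduce to the case of adaptive methods with pathwise cardinality bounded by $N:=\lceil 2n/\varepsilon\rceil$: since $\EE[\nu(X(0),W)]\le n$, Markov's inequality gives $\PP(\nu\le N)\ge 1-\varepsilon/2$, and on $\{\nu\le N\}$ the method $\Xh_n(T)$ coincides $\PP$-a.s.\ with some element of $\Aa_N(\R,W)$. Thus it suffices to establish the desired bound for arbitrary $V\in\Aa_N(\R,W)$. On any adaptive partition $0=\tau_0\le\tau_1\le\dots\le\tau_N$ generated by the evaluation nodes, a two-step Itô-Taylor expansion of $X$, combined with condition~\eqref{cond:regularity} and the sixteenth-moment hypothesis on $X(0)$, allows all contributions of order three or higher in the Brownian increments to be absorbed into an $L_4$-remainder of order $N^{-3/2}$. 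This leaves as the irreducible part a sum $Z_N=\sum_{k=1}^{N}g\bigl(\tau_{k-1},X(\tau_{k-1})\bigr)\,\Delta_k$, where $g:=a^{(0,1)}b-b^{(1,0)}-a\,b^{(0,1)}-\tfrac12 b^2 b^{(0,2)}$ and $\Delta_k$ is a suitable ``twisted'' iterated Wiener integral on $[\tau_{k-1},\tau_k]$ which, conditionally on $\Fc_0\vee\sigma(W(\tau_1),\dots,W(\tau_N))$, has mean zero and variance bounded below by $c_1(\tau_k-\tau_{k-1})^3$.

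Summing the conditional variances and using $\sum_{k=1}^{N}(\tau_k-\tau_{k-1})^3\ge T^3/N^2$ (Jensen), together with~\eqref{cond:ft} and~\eqref{cond:init} which ensure $|g(0,X(0))|$ is bounded below by a deterministic positive constant on an event of probability at least $1-\varepsilon/4$, I obtain $\Var(Z_N\mid\Fc_0\vee\sigma(W(\tau_1),\dots,W(\tau_N)))\ge c_2/N^2$ on that event. To upgrade this $L_2$-type bound to a probability bound, I would exploit that $Z_N$, modulo the controllable remainder, is an element of the second Wiener chaos conditioned on a Gaussian vector, so that its conditional density is bounded above by a constant independent of the conditioning; the resulting anti-concentration estimate $\PP(|Z_N|\le\delta/N\mid\cdot)\le C\delta$, combined with the remainder bound and choosing $\delta$ small enough as a function of $\varepsilon$, then delivers $\PP(|X(T)-\Xh_n(T)|\ge c/n)\ge 1-\varepsilon$.

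The principal obstacle is that both the variance lower bound and the anti-concentration estimate must be uniform over all adaptive sampling rules of mean cost $n$. The uniform lower bound on the per-interval conditional variance $\Var(\Delta_k\mid\cdot)\gtrsim(\tau_k-\tau_{k-1})^3$ is a classical fact about Gaussian processes conditioned on finitely many linear observations, but the subsequent conversion to a uniform conditional small-ball estimate for $Z_N$ relies on density bounds for second-chaos random variables; here I would invoke the Gaussian distribution lemmas announced in the appendix to the paper. Once such a quantitative anti-concentration result is in place, the remaining steps—namely the Itô-Taylor expansion and the $L_p$-control of remainders under condition~\eqref{cond:regularity} together with $\EE[|X(0)|^{16}]<\infty$—are standard.
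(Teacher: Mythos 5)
Your overall strategy is the right one and matches the paper's in outline: isolate in $X(T)$ a residual of the form $\sum_k(\text{weight})_k\cdot\Delta_k$ with $\Delta_k$ conditionally centered of conditional variance $\gtrsim(\tau_k-\tau_{k-1})^3$, push the remainder to order $N^{-3/2}$, sum the variances to get $\gtrsim N^{-2}$ via a H\"older/Jensen step, and finish with anti-concentration. (Your opening Markov reduction is superfluous here: Proposition~\ref{prop:ft-global} is stated for the fixed-cardinality class $\Aa_n(\R,W)$; the averaged-cardinality reduction is only needed later, in Theorem~\ref{thm:ft-local}.) However, there is a genuine gap at the anti-concentration step, and it stems from your choice of weights. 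You take the weights to be $g(\tau_{k-1},X(\tau_{k-1}))$ with $X$ the \emph{true} solution; these are not measurable with respect to $\Fc_0\vee\sigma(W(\tau_1),\dots,W(\tau_N))$, since $X(\tau_{k-1})$ depends on the whole Brownian path and not just on the finitely many observed values. Consequently the conditional law of your $Z_N$ is not Gaussian, the conditional variance of the sum is not the sum of $(\text{weight})_k^2\Var(\Delta_k\mid\cdot)$, and your fallback --- treating $Z_N$ as a second-chaos variable with a conditional density "bounded above by a constant" --- is unsupported and in fact false in general: second-chaos variables can have unbounded densities (a $\chi^2_1$ variable has $\PP(|Z|\le\delta)\asymp\sqrt{\delta}$, not $O(\delta)$), and the appendix lemmas you invoke (Lemmas~\ref{lem:normal1}--\ref{lem:normal3}) are purely Gaussian statements that do not cover this case. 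The paper's proof avoids the problem precisely by replacing the true weights $\Ups(t_\ell)=\msf(t_\ell)\zsf(t_\ell,X(t_\ell))$ with surrogates $\Upsh_k(t_\ell)$ computed from a truncated Wagner--Platen scheme on a fixed equidistant grid that is \emph{adjoined} to the adaptive sites; these surrogates are measurable with respect to the enlarged conditioning $\sigma$-algebra $\A$, so the residual is exactly conditionally Gaussian and Lemma~\ref{lem:normal1} applies, while the replacement error is controlled by the rate-$k^{-1/2}$ bound \eqref{eq:old1} from \citet{MG04}.

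A second, smaller gap: you derive the variance lower bound from $|g(0,X(0))|$ being bounded below on a large event, but the conditional variance of the residual involves the weights at \emph{all} times $\tau_{k-1}\in[0,T]$, not just at $t=0$. Conditions (A\ref{cond:ft}) and (A\ref{cond:init}) only give nondegeneracy at time $0$; to conclude that a positive fraction of the weights stays bounded away from zero you need the pathwise continuity of the weight process and a convergence-in-probability argument for the empirical functional $\Rh_k=k^{-1}\sum_\ell|\Upsh_k(t_\ell)|^{2/3}$ towards $\int_0^1|\Ups(t)|^{2/3}\,\dd t>0$ (steps \eqref{eq:new1}--\eqref{eq:new5} of the paper). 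Also note that the weight is not $g$ alone but $g$ multiplied by the variational/propagation factor $\msf$, which accounts for how a perturbation at time $t_\ell$ propagates to time $T$; omitting it would make the claimed identification of the "irreducible part" of $X(T)$ incorrect. With the Wagner--Platen surrogate weights, the $\Rh_k$ argument, and the conditionally Gaussian structure restored, the rest of your outline goes through.
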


\begin{proof}
This proof is based on the techniques and the notation developed in \citet*{MG04} for the $p$-th mean error analysis
	of pointwise approximation of scalar SDEs.
	We provide the corresponding tools and results from the latter work that are needed in the present context.
	The assumption $\EE{\left[|X(0)|^{16}\right]}<\infty$ corresponds to the case $p=1$ in \citet*[Condition~(B), p.~1609]{MG04}.
	
	Without loss of generality we may assume $T=1$.
	For every $t\in [0,1]$ we put 
	\[
	\Ups(t)= \msf(t)\cdot \zsf(t,X(t))
	\]
	where
	\[
	\zsf =  a^{(0,1)}\,b - b^{(1,0)} - a\,b^{(0,1)} - \tfrac{1}{2} b^2 b^{(0,2)}
	\]
	and 
	\[
	\msf(t) = \exp\biggl(\int_t^1\Bigl(a^{(0,1)}-1/2\cdot\bigl(b^{(0,1)}\bigr)^2\Bigr)(u,X(u))\,\dd u 
	+\int_t^1 b^{(0,1)}(u,X(u))\,\dd W(u)\biggr).
	\]
	The analysis in \citet*{MG04} shows that the problem of pathwise approximation of $X(1)$
	based on finitely many evaluations of the driving Brownian motion $W$ is strongly connected
	to an integration problem for $W$ with a random weight given by the process $\Ups$.

	For $k\in\N$ we put $t_\ell = \ell/k$ for $\ell=0,\dots,k$ and we define a truncated Wagner-Platen scheme
	$\Xhwpt_k=\bigl(\Xhwpt_k(t_\ell)\bigr)_{\ell=0,\dots,k}$ by $\Xhwpt_k(0)=X(0)$ and for $\ell=0,\ldots,k-1$,
		\begin{align*}
		&\Xhwpt_k(t_{\ell+1})\\
		& \quad=\Xhwpt_k(t_\ell) + a\bigl(t_\ell,\Xhwpt_k(t_\ell)\bigr) \cdot (t_{\ell+1}-t_\ell) + 
			b\bigl(t_\ell,\Xhwpt_k(t_\ell)\bigr) \cdot\bigl(W(t_{\ell+1})-W(t_\ell)\bigr)\\
		& \qquad+ 1/2\cdot\bigl(bb^{(0,1)}\bigr)\bigl(t_\ell,\Xhwpt_k(t_\ell)\bigr)
			\cdot\bigl((W(t_{\ell+1})-W(t_\ell))^2-(t_{\ell+1}-t_\ell)\bigr)\\
		& \qquad+ 
			\bigl(b^{(1,0)}+ab^{(0,1)}-1/2\cdot b\,(b^{(0,1)})^2 \bigr)
			\bigl(t_\ell,\Xhwpt_k(t_\ell)\bigr) \cdot\bigl(W(t_{\ell+1})-W(t_\ell)\bigr)\cdot 
			(t_{\ell+1}-t_\ell)\\
		& \qquad+ 
			1/6\cdot\bigl(b(b^{(0,1)})^2+b^2b^{(0,2)}\bigr)
			\bigl(t_\ell,\Xhwpt_k(t_\ell)\bigr) \cdot\bigl(W(t_{\ell+1})-W(t_\ell)\bigr)^3\\
		& \qquad+ 1/2\cdot\bigl(a^{(1,0)}+aa^{(0,1)}+1/2\cdot b^2 
			a^{(0,2)}\bigr)\bigl(t_\ell,\Xhwpt_k(t_\ell)\bigr) \cdot (t_{\ell+1}-t_\ell)^2.
	\end{align*}
	The scheme $\Xhwpt_k$ is used for two purposes. First, we obtain a discrete-time
	approximation $\Upsh_k =\bigl(\Upsh_k(t_\ell)\bigr)_{\ell=0,\ldots,k-1}$
	to the random weight $\Ups$ in the following way. Put
	\[
	\Mh_\ell = 1+a^{(0,1)}\bigl(t_\ell,\Xhwpt_k(t_\ell)\bigr)\cdot (t_{\ell+1}-t_\ell) + 
		b^{(0,1)}\bigl(t_\ell,\Xhwpt_k(t_\ell)\bigr) \cdot \bigl(W(t_{\ell+1})-W(t_\ell)\bigr)
	\]
	for $\ell=0,\dots,k-1$, define a discrete-time Euler-type approximation
	$\msfh_k=\bigl(\msfh_k(t_\ell)\bigr)_{\ell=0,\dots,k}$ of the process $\msf$ by
	\[
	\msfh_k(t_\ell)=
		\begin{cases}
			\Mh_\ell\cdots \Mh_{k-1}, & \text{if }\ell\le  k-1,\\
			1, & \text{if } \ell=k,
		\end{cases}
	\]
	and define for $\ell=0,\ldots,k-1$, 
	\[
	\Upsh_k(t_\ell) = \msfh_k(t_{\ell+1})\cdot \zsf\bigl(t_\ell,\Xhwpt_k(t_\ell)\bigr).
	\]
	Second, we define an auxiliary scheme $\Xaux_k=\bigl(\Xaux_k(t_\ell)\bigr)_{\ell=0,\ldots,k}$ by
	\[
	\Xaux_k(t_\ell) = \Xhwpt_k(t_\ell) + \Qb_k(t_\ell),
	\]
	where $\Qb_k=\bigl(\Qb_k(t_\ell)\bigr)_{\ell=0,\dots,k}$ is defined by $\Qb_k(0)=0$ and 
	\begin{align*}
		\Qb_k(t_{\ell+1}) = \Mh_\ell\cdot \Qb_k(t_{\ell})
			+ \zsf\bigl(t_\ell,\Xhwpt_k(t_\ell)\bigr)
			\cdot \int_{t_\ell}^{t_{\ell+1}} \bigl(W(t)-W(t_\ell)\bigr)\,\dd t
	\end{align*}
	for $\ell=0,\dots,k-1$. 
	Observe that for $\ell=0,\dots,k$ it holds that
	\begin{align*}
		\Qb_k(t_{\ell}) = \sum_{r=0}^{\ell-1} \Bigg[
			\biggl(\zsf\bigl(t_r,\Xhwpt_k(t_r)\bigr)\cdot\int_{t_r}^{t_{r+1}}\bigl(W(t)-W(t_r)\bigr)\,\dd t\biggr)
			\cdot \prod_{j=r+1}^{\ell-1}\Mh_j \Bigg].
	\end{align*}
	In particular, we have
	\begin{align*}
		\Qb_k(1) &= \sum_{r=0}^{k-1} \Bigg[
			\biggl(\zsf\bigl(t_r,\Xhwpt_k(t_r)\bigr)\cdot\int_{t_r}^{t_{r+1}}\bigl(W(t)-W(t_r)\bigr)\,\dd t\biggr)
			\cdot \prod_{j=r+1}^{k-1}\Mh_j \Bigg] \\
		&= \sum_{r=0}^{k-1} \Bigg[
			\biggl(\zsf\bigl(t_r,\Xhwpt_k(t_r)\bigr)\cdot\int_{t_r}^{t_{r+1}}\bigl(W(t)-W(t_r)\bigr)\,\dd t\biggr)
			\cdot \msfh_k(t_{r+1}) \Bigg] \\
		&= \sum_{r=0}^{k-1} \Bigg[ \Upsh_k(t_r) \cdot \int_{t_r}^{t_{r+1}}\bigl(W(t)-W(t_r)\bigr)\,\dd t \Bigg].
	\end{align*}
	Hence we get
	\begin{align}\label{eq:new0}
		\Xaux_k(1) = \Xhwpt_k(1) + \sum_{r=0}^{k-1}
			\Bigg[ \Upsh_k(t_r) \cdot \int_{t_r}^{t_{r+1}}\bigl(W(t)-W(t_r)\bigr)\,\dd t \Bigg].
	\end{align}
	We have 
	\begin{align}\label{eq:old1}
		\exists\,c_1\in(0,\infty) \enspace \forall\,k\in\N \colon\,
			\max_{\ell=0,\dots,k-1}\EE\bigl[|\Ups(t_\ell)-\Upsh_k(t_\ell)|\bigr]  \leq c_1\cdot k^{-1/2},
	\end{align}
	see \citet*[Lemma~2, p.~1626]{MG04}, and
	\begin{align}\label{eq:old2}
		\exists\,c_2\in(0,\infty) \enspace \forall\,k\in\N\colon\,
			\EE\bigl[|X(1)-\Xaux_k(1)|\bigr] \leq c_2\cdot k^{-3/2},
	\end{align}
	see \citet*[Lemma~12, p.~1639]{MG04}.
	
	For $k\in\N$ we put 
	\[
	R_k = \frac{1}{k}\cdot \sum_{\ell=0}^{k-1} \big|\Ups(t_\ell)\big|^{2/3}, \qquad
		\Rh_k = \frac{1}{k}\cdot \sum_{\ell=0}^{k-1} \big|\Upsh_k(t_\ell)\big|^{2/3}.
	\]
	Note that the process $\Ups$ has continuous paths. Hence, $\PP$-a.s.,
	\begin{align}\label{eq:new1}
		\lim_{k\to\infty} R_k = \int_0^1 |\Ups(t)|^{2/3}\,\dd t.
	\end{align}
	Moreover, since 
	\begin{align*}
		\EE[|R_k-\Rh_k|] &= \frac{1}{k}\cdot \EE\!\left[\biggl\vert
				\sum_{\ell=0}^{k-1} \bigl|\Ups(t_\ell)\bigr|^{2/3}-\bigl|\Upsh_k(t_\ell)\bigr|^{2/3}
			\biggr\vert\right] \\
		&\leq \frac{1}{k}\cdot \sum_{\ell=0}^{k-1}\EE\!\left[\Bigl\vert
			\bigl|\Ups(t_\ell)\bigr|^{2/3}-\bigl|\Upsh_k(t_\ell)\bigr|^{2/3} \Bigr\vert\right] \\
		&\leq \frac{1}{k}\cdot \sum_{\ell=0}^{k-1}\EE\Bigr[\bigl|\Ups(t_\ell) - \Upsh_k(t_\ell)\bigr|^{2/3}\Bigr]
			\leq \max_{\ell=0,\dots,k-1} {\left(
				\EE\Bigr[\bigl|\Ups(t_\ell) - \Upsh_k(t_\ell)\bigr|\Bigr] \right)^{2/3}},
	\end{align*}
	we have 
	\begin{align}\label{eq:new2}
		\lim_{k\to\infty}\EE\bigl[|R_k-\Rh_k|\bigr] = 0
	\end{align}
	due to~\eqref{eq:old1}. Combining~\eqref{eq:new1} with~\eqref{eq:new2} we conclude that 
	\begin{align}\label{eq:new3}
		\lim_{k\to\infty} \Rh_k = \int_0^1 |\Ups(t)|^{2/3}\,\dd t\quad\text{in probability.}
	\end{align}
	By definition of $\Ups$, (A\ref{cond:ft}), and (A\ref{cond:init}) we have
	\begin{align*}
		\PP(\Ups(0)\neq0) = \PP(\zsf(0,X(0))\neq 0) \geq \PP(X(0)\in I) = 1.
	\end{align*}
	Observing the continuity of the process $\Ups$ we thus obtain 
	\begin{align}\label{eq:new4}
		\PP\biggl(\int_0^1 |\Ups(t)|^{2/3}\,\dd t > 0\biggr) = 1.
	\end{align}
	Combining~\eqref{eq:new3} with~\eqref{eq:new4} yields
	\begin{align}\label{eq:new5}
		\forall\,\varepsilon\in (0,1) \enspace \exists\, k_0\in\N, \beta \in (0,1) \enspace
			\forall\, k\ge k_0\colon\, \PP\bigl(\Rh_k \geq \beta\bigr)\ge 1-\varepsilon.
	\end{align}
	
	Let $k\in\N$ and consider an approximation $\Xh_k(1)\in \Aa_{k}(\R, W)$.
	Observe that there exist random variables $\tau_0,\dots,\tau_{2k}\colon \Omega\to [0,\infty)$ with
	\begin{enumerate}[(i)]\addtolength{\itemsep}{0.25\baselineskip}
		\item $\tau_\ell$ is measurable with respect to $\sigma\bigl(\Fc_0,W(\tau_0),\dots, W(\tau_{\ell-1})\bigr)$ for all $\ell=0,\dots,2k$,
		\item $\tau_\ell=t_\ell$ for all $\ell=0,\dots,k$,
		\item $\Xh_k(1)$ is measurable with respect to the $\sigma$-algebra $\A=\sigma\bigl(\Fc_0,W(\tau_0),\dots, W(\tau_{2k})\bigr)$.
	\end{enumerate}
	Let $B=(B(t))_{t\in[0,1]}$ denote the piecewise linear interpolation of $(W(t))_{t\in[0,1]}$ at the nodes $\tau_0,\dots,\tau_{2k}$.
	We define the process $Z=(Z(t))_{t\in[0,1]}$ by $Z(t)=W(t)-B(t)$ and put
	\[
	U = \Xh_k(1)-\Xhwpt_{k}(1)-\sum_{\ell=0}^{k-1} \Upsh_{k}(t_\ell)
		\cdot\int_{t_\ell}^{t_{\ell+1}} \bigl(B(t)-W(t_\ell)\bigr)\,\dd t.
	\]
	From \eqref{eq:new0} we get
	\begin{align*}
		\Xaux_{k}(1)-\Xh_k(1)
			= \sum_{\ell=0}^{k-1} \Upsh_{k}(t_\ell)\cdot\int_{t_\ell}^{t_{\ell+1}} Z(t)\,\dd t - U.
	\end{align*}
	Conditioned on $\A$, the values of $\Upsh_k$, $U$, and the evaluation sites
	$\tau_0,\dots,\tau_{2k}$ are fixed
	and the process $Z$ consists of independent Brownian bridges (from $0$ to $0$) on the subintervals corresponding to
	$\tau_0,\dots,\tau_{2k}$, cf.~\citet*[Lemma~1-2]{yaroslavtseva2017}. Note that
	\begin{align*}
		\EE\biggl[ \int_{t_\ell}^{t_{\ell+1}} Z(t)\,\dd t \,\Big|\, \A\biggr] = 0
	\end{align*}
	for $\ell=0,\ldots,k-1$. We conclude that conditioned on $\A$
	the random variable $\Xaux_{k}(1)-\Xh_k(1)$ is normally distributed with variance 
	\begin{align}\label{eq:new6}
		\Var\Bigl[\Xaux_{k}(1)-\Xh_k(1) \,\big|\, \A\Bigr] = \EE\biggl[\Bigl|\sum_{\ell=0}^{k-1} 
			\Upsh_{k}(t_\ell)\cdot\int_{t_\ell}^{t_{\ell+1}} Z(t)\,\dd t \Bigr|^2 \,\Big|\, \A\biggr].
	\end{align}
	For $\ell\in\{0,\dots,k-1\}$ we define $d_\ell=\#\{i\in\{0,\dots,2k\}\colon\,\tau_i\in(t_\ell,t_{\ell+1})\}$.
	By using \citet*[Eq.~(17), p.~1624]{MG04} we obtain
	\begin{align}\label{eq:new7}
		\begin{aligned}[c]
			\EE\biggl[\Bigl|\sum_{\ell=0}^{k-1} 
				\Upsh_{k}(t_\ell)\cdot\int_{t_\ell}^{t_{\ell+1}} Z(t)\,\dd t \Bigr|^2 \,\Big|\, \A\biggr]
				&= \sum_{\ell=0}^{k-1}\left(\Upsh_{k}(t_\ell)\right)^2 \cdot
				\EE\biggl[\Bigl|\int_{t_\ell}^{t_{\ell+1}} Z(t)\,\dd t\Bigr|^2 \,\Big|\, \A\biggr] \\
			&\geq \sum_{\ell=0}^{k-1}\left(\Upsh_{k}(t_\ell)\right)^2 \cdot
				\frac{1}{12k^3} \cdot \frac{1}{\left(d_\ell+1\right)^2}.
		\end{aligned}
	\end{align}
	Clearly, $\sum_{\ell=0}^{k-1} d_\ell \leq  k$, and therefore, by the H\"older inequality,
	\[
		\bigl(k\cdot\Rh_k\bigr)^3
			= \left(\sum_{\ell=0}^{k-1} (d_\ell +1)^{2/3}
			\cdot \frac{\bigl|\Upsh_k(t_\ell)\bigr|^{2/3}}{(d_\ell +1)^{2/3}} \right)^3 \leq 4k^2 \cdot \sum_{\ell=0}^{k-1}\bigl(\Upsh_{k}(t_\ell)/(d_\ell +1)\bigr)^2,
	\]
	which jointly with~\eqref{eq:new6} and~\eqref{eq:new7} implies
	\begin{align*}
		\Var\bigl[k \cdot |\Xaux_{k}(1)-\Xh_k(1)| \,\big|\, \A\bigr]
		\geq \frac{1}{48}\,\Rh_k^3.
	\end{align*}
	Employing Lemma~\ref{lem:normal1} we conclude that
	\begin{align*}
		\forall\,\varepsilon\in (0,1)\colon\,
			\PP\bigl(k\cdot|\Xaux_{k}(1)-\Xh_k(1)|\geq\varepsilon\,(\Rh_k^{3}/48)^{1/2} \,\big|\, \A\bigr) \geq 1-\varepsilon
	\end{align*}
	and hence
	\begin{align}\label{eq:new8}
		\forall\,\varepsilon\in (0,1)\colon\,
			\PP\bigl(k\cdot|\Xaux_{k}(1)-\Xh_k(1)|\geq\varepsilon\,\Rh_k^{3/2}/7 \bigr) \geq 1-\varepsilon. 
	\end{align}

	Fix $\varepsilon\in(0,1)$ and choose $k_0\in\N$ and $\beta\in(0,1)$ according to~\eqref{eq:new5}.
	By~\eqref{eq:new5} and \eqref{eq:new8} we obtain for all $k\geq k_0$ and $\Xh_k(1)\in\Aa_{k}(\R, W)$ that
	\begin{align}\label{eq:new9}
		\begin{aligned}
			&\PP\Bigl(|\Xaux_{k}(1)-\Xh_k(1)| \geq \frac{\varepsilon\beta^{3/2}}{7k} \Bigr) \\
			&\qquad\geq \PP\bigl( \bigl\{ k\cdot|\Xaux_{k}(1)-\Xh_k(1)|\ge \varepsilon\,\Rh_k^{3/2}/7 \bigr\}
				\cap \{\Rh_k\geq\beta \} \bigr)\\
			&\qquad\geq \PP\bigl( k\cdot|\Xaux_{k}(1)-\Xh_k(1)|\ge \varepsilon\,\Rh_k^{3/2}/7 \bigr)
				- \PP\bigl( \Rh_k<\beta \bigr)\\
			&\qquad\geq 1-2\varepsilon.
		\end{aligned}
	\end{align}
	Put $c_3 = \varepsilon\beta^{3/2} /14\in (0,\infty)$. Using the Markov inequality we derive from~\eqref{eq:old2}
	that for all $k\in\N$ we have
	\begin{align}\label{eq:new10}
		\PP(|X(1)-\Xaux_{k}(1)|\ge c_3/k) \leq k/c_3\cdot c_2\cdot k^{-3/2}
		=c_2/c_3\cdot k^{-1/2}.
	\end{align}
	We conclude from \eqref{eq:new9} and \eqref{eq:new10} that for all
	$k\geq \max\bigl(k_0, \left(c_2/(c_3\varepsilon)\right)^2\bigr)$
	and $\Xh_k(1)\in\Aa_{k}(\R, W)$ we have
	\begin{align*}
		&\PP(|X(1)-\Xh_k(1)|\ge c_3/k)\\
		&\qquad\geq \PP\bigl( \bigl\{|\Xaux_{k}(1)-\Xh_k(1)|\geq 2c_3/k\bigr\}
			\cap \bigl\{|X(1)-\Xaux_{k}(1)|< c_3/k\bigr\} \bigr)\\
		&\qquad\geq \PP\bigl( |\Xaux_{k}(1)-\Xh_k(1)|\geq 2c_3/k \bigr) - \PP\bigl( |X(1)-\Xaux_{k}(1)|\geq c_3/k \bigr)\\
		&\qquad\geq 1-2\varepsilon - c_2/c_3\cdot k^{-1/2} \ge 1-3\varepsilon,
	\end{align*}
	which completes the proof.
\end{proof}

\subsection{Local Assumptions}\label{sec:ft-local}

The next proposition shows that even under very mild local regularity conditions on the 
coefficients $a$ and $b$, the probability for an adaptive method $\Xh_n(T)\in \Aa_n(\R,W)$ of having an error at least of order one is still uniformly bounded away from zero.
Its proof exploits a comparison result for SDEs, see Proposition~\ref{prop:localization} in the appendix,
to reduce the general case to the case treated in Proposition~\ref{prop:ft-global}.

\begin{prop}[Probability bounds on the pointwise error of adaptive methods II]\label{prop:ft-local}
	Assume the setting in Section~\ref{sec:setting}. Let $t_0\in[0,T)$
	and let $\emptyset\neq I\subseteq\R$ be an open interval such that
	\begin{enumerate}[\hspace{.5cm}({A}1*)]\addtolength{\itemsep}{0.25\baselineskip}
		\item\label{cond:regularity2} $\forall\, i\in\{0,1,2\},\, j\in\{0,1,2,3\}\colon\,$the partial derivatives
			$a^{(i,j)}, b^{(i,j)}$ exist on $[t_0,T]\times I$ and are continuous,
		\item\label{cond:ft2} $\forall\,x\in  I\colon\,
			{\left(a^{(0,1)}\,b - b^{(1,0)} - a\, b^{(0,1)} - \frac{1}{2} b^2 b^{(0,2)}\right)}(t_0,x)\neq 0$, and \\[.1cm] $\forall\, (t,x)\in [t_0,T]\times I\colon\, b(t,x) \neq 0$,
		\item\label{cond:init2} $\PP(X(t_0)\in I)>0$.
	\end{enumerate}
	Then there exist constants $c,\gamma\in(0,\infty)$ such that
	for all $n\in\N$ and for all $\Xh_n(T)\in\Aa_n(\R,W)$ we have
	\begin{align*}
		\PP \bigl(|X(T)-\Xh_n(T)|\geq c/n\bigr) \geq\gamma.
	\end{align*}
\end{prop}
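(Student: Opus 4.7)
The plan is to reduce Proposition~\ref{prop:ft-local} to Proposition~\ref{prop:ft-global} via a localization and coupling argument. Using assumption~(A3*) together with inner regularity of the law of $X(t_0)$, I would first pick a compact interval $K\subset I$ and a bounded open interval $I_1$ with $K\subset I_1\subset\overline{I_1}\subset I$ such that $\PP(X(t_0)\in K)>0$, and set $\eta=\mathrm{dist}(K,\R\setminus I_1)>0$. Since, by~(A1*), the partial derivatives $a^{(i,j)},b^{(i,j)}$ for $i\in\{0,1,2\},\,j\in\{0,1,2,3\}$ are continuous on the compact set $[t_0,T]\times\overline{I_1}\subset[t_0,T]\times I$, a standard smooth cutoff/extension produces $\at,\bt\colon[t_0,T]\times\R\to\R$ that coincide with $a,b$ on $[t_0,T]\times I_1$ and whose partial derivatives of all orders $(i,j)$ with $i\in\{0,1,2\},\,j\in\{0,1,2,3\}$ exist, are continuous, and are bounded on $[t_0,T]\times\R$. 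Fixing $x^\ast\in K$, I put $\Xt(t_0)=X(t_0)\ind_{\{X(t_0)\in K\}}+x^\ast\ind_{\{X(t_0)\notin K\}}$, which is $\Fc_{t_0}$-measurable with values in $K$, and let $\Xt$ denote the unique strong solution on $[t_0,T]$ of the SDE with coefficients $\at,\bt$, initial time $t_0$, and initial value $\Xt(t_0)$.

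I would then transfer to a time-shifted framework via $\Wt(s)=W(t_0+s)-W(t_0)$, a Brownian motion on $[0,T-t_0]$ with respect to $(\Fc_{t_0+s})_{s\in[0,T-t_0]}$ that is independent of $\Fc_{t_0}$. In this setting $\Xt$ satisfies an SDE driven by $\Wt$ with initial value in $K\subset I_1$, and Proposition~\ref{prop:ft-global} applies with the Borel set chosen as $I_1$: condition~(A1) is met by construction of $\at,\bt$; the Lie-bracket condition~(A2) at time $0$ of the shifted SDE coincides with the corresponding condition at time $t_0$ of the original coefficients on $I_1$, which holds by~(A2*); condition~(A3) holds since $\Xt(t_0)\in K\subset I_1$ $\PP$-a.s.; and all moments of $\Xt(t_0)$ are finite because $K$ is bounded. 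Moreover, any $\Xh_n(T)\in\Aa_n(\R,W)$ remains, after time-shift, an element of $\Aa_n(\R,\Wt)$ in the translated setting, because evaluations of $W$ at times in $[0,t_0]$ are $\Fc_{t_0}$-measurable and the remaining at most $n$ adaptive evaluations on $[t_0,\infty)$ become adaptive evaluations of $\Wt$ on $[0,T-t_0]$. Hence Proposition~\ref{prop:ft-global} furnishes, for any $\varepsilon\in(0,1)$, a constant $\tilde c\in(0,\infty)$ (depending on $\varepsilon$) with
\begin{equation*}
\PP\bigl(|\Xt(T)-\Xh_n(T)|\ge\tilde c/n\bigr)\ge 1-\varepsilon\qquad\text{for all } n\in\N,\,\Xh_n(T)\in\Aa_n(\R,W).
\end{equation*}

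Finally I would produce a positive-probability event on which $X$ and $\Xt$ agree at time $T$. Let $\tau=\inf\{t\in[t_0,T]\colon\Xt(t)\notin I_1\}\wedge T$ and $E=\{X(t_0)\in K\}\cap\{\tau=T\}$. On $E$, the processes $X$ and $\Xt$ satisfy the same SDE on $[t_0,T]\times I_1$ with identical initial value $X(t_0)=\Xt(t_0)\in K$, so the comparison statement of Proposition~\ref{prop:localization} yields $X(t)=\Xt(t)$ for all $t\in[t_0,T]$, in particular $X(T)=\Xt(T)$ on $E$. To see $\PP(E)>0$ I would invoke the support theorem for the SDE defining $\Xt$: since $\bt=b$ is continuous and non-zero on $[t_0,T]\times I_1$ by~(A2*), the constant path $\phi\equiv y$ lies in the topological support of the conditional law of $\Xt$ given $\Xt(t_0)=y$ for every $y\in K$, and continuity in the initial value yields a uniform positive lower bound on the compact $K$ for the conditional probability of $\{\sup_{t\in[t_0,T]}|\Xt(t)-\Xt(t_0)|<\eta\}$, which forces $\tau=T$. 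Combined with $\PP(X(t_0)\in K)>0$ this gives $\PP(E)>0$. Choosing $\varepsilon=\PP(E)/2$ above and setting $c=\tilde c$, $\gamma=\PP(E)/2$ then yields
\begin{equation*}
\PP\bigl(|X(T)-\Xh_n(T)|\ge c/n\bigr)\ge\PP\bigl(\{|\Xt(T)-\Xh_n(T)|\ge c/n\}\cap E\bigr)\ge\PP(E)-\varepsilon=\gamma,
\end{equation*}
completing the proof. The main obstacle is the lower bound $\PP(E)>0$: it rests essentially on the non-degeneracy $b\ne 0$ on $[t_0,T]\times I$ furnished by~(A2*), which is precisely what allows the support theorem to give the required small-ball estimate for $\Xt$, together with the compact separation $\eta$ between $K$ and $\partial I_1$.
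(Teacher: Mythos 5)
Your proposal is correct and follows the same skeleton as the paper's proof: reduce to $t_0=0$, truncate the initial value to a compact subset of $I$, extend the coefficients off a slightly larger interval so that Proposition~\ref{prop:ft-global} applies to the modified solution $\Xt$, show that $X$ and $\Xt$ agree at time $T$ with positive probability $\PP(E)$, and then run Proposition~\ref{prop:ft-global} with $\varepsilon=\PP(E)/2$. The genuine divergence is in how the positive-probability coupling event is obtained. The paper's extension adds a second cutoff $\eta_2$ precisely so that the modified diffusion coefficient $\bt$ is bounded away from zero on all of $[0,T]\times\R$; this global nondegeneracy is what its self-contained Lemma~\ref{lem:time-change} (Dambis/Dubins--Schwarz time change plus Girsanov) needs to show that $\Xt$ stays inside the localization interval with positive probability, and the whole coupling is then packaged in Proposition~\ref{prop:localization}. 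You instead keep a possibly degenerate extension and invoke the Stroock--Varadhan support theorem, which only needs $b\neq 0$ along the constant skeleton path inside $I_1$; this is a legitimate shortcut that trades the paper's elementary, hypothesis-light argument for a heavier external theorem (whose regularity hypotheses your $C^{2,3}$-type bounded extension does satisfy). Two small points to tighten: (1) the event-wise identification $X=\Xt$ on $E$ is not what Proposition~\ref{prop:localization} asserts (its conclusion is only a positive-probability statement); what you need is the second assertion of Lemma~\ref{lem:localization}, applied after identifying your $\Xt$ with the solution started from the exact value $X(t_0)$ on $\{X(t_0)\in K\}$ via pathwise uniqueness, exactly as in the proof of Proposition~\ref{prop:localization}. (2) The uniform-in-$y$ lower bound over $K$ is unnecessary: pointwise positivity of $y\mapsto\PP(\sup_t|\Xt^y(t)-y|<\eta)$ on $K$, together with measurability and $\PP(X(t_0)\in K)>0$, already gives $\PP(E)>0$ by integrating against the law of $X(t_0)$.
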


\begin{proof}
	By considering the SDE~\eqref{eq:SDE} starting from time $t_0$ we may assume $t_0=0$.
	According to the openness of $I$ and (A\ref{cond:init2}*), there exist
	bounded open intervals $I_1,I_2,I_3\subseteq\R$ such that
	\begin{align*}
		\emptyset\neq I_3 \subseteq \bar{I_3} \subseteq I_2 \subseteq \bar{I_2} \subseteq I_1 \subseteq \bar{I_1} \subseteq I
	\end{align*}
	and $\PP(X(0)\in I_3) > 0$.
	Here, $\bar{I_1}, \bar{I_2},\bar{I_3}$ denote the closures of the intervals $I_1,I_2,I_3$, respectively.
	Due to the continuity of $b$ on $[0,T]\times \bar I_1$ and the second condition in (A\ref{cond:ft2}*) we may without loss of generality assume that
	\begin{align}\label{eq:b-pos}
		\inf_{(t,x)\in [0,T]\times \bar I_1} b(t,x) > 0.
	\end{align}
	Let $\eta_1,\eta_2\colon\R\to\R$ be infinitely differentiable functions
	such that $0\leq \eta_1,\eta_2 \leq 1$ and
	\begin{align*}
		\eta_1(x) =
		\begin{cases}
			1,  &\text{if }x\in I_2,\\
			0, &\text{if }x\in I_1^c,
		\end{cases}\qquad
		\eta_2(x) =
		\begin{cases}
			0, &\text{if }x\in I_3,\\
			1, &\text{if }x\in I_2^c.
		\end{cases}
	\end{align*}
	Furthermore, define $\at\colon[0,T]\times\R\to\R$ and $\bt\colon[0,T]\times\R\to\R$ by
	\begin{align*}
		\at(t,x)=\eta_1(x)\cdot a(t,x), \qquad \bt(t,x)=\eta_1(x)\cdot b(t,x)+\eta_2(x).
	\end{align*}
	Due to (A\ref{cond:regularity2}*) and \eqref{eq:b-pos} it holds that
	\begin{enumerate}[(i)]\addtolength{\itemsep}{0.25\baselineskip}
		\item $\forall\, i\in\{0,1,2\},\, j\in\{0,1,2,3\}\colon\,$the partial derivatives
			$\at^{(i,j)}, \bt^{(i,j)}$ exist on $[0,T]\times \R$ are continuous and bounded,
		\item $\inf_{(t,x)\in [0,T]\times \R} |\bt(t,x)| > 0$,
		\item $\forall\,t\in [0,T],\, x\in I_3\colon\, a(t,x)=\at(t,x)$ and $b(t,x)=\bt(t,x)$.
	\end{enumerate}
	Let $x_0\in I_3$ and define the bounded random variable $\Xb(0)\colon\Omega\to\R$ by
	\begin{align*}
		\Xb(0)(\omega) =
		\begin{cases}
			X(0)(\omega), &\text{if }X(0)(\omega)\in I_3,\\
			x_0, &\text{otherwise}.
		\end{cases}
	\end{align*}
	Furthermore, let $\Xb\colon [0,T]\times\Omega\to\R$ be an $(\Fc_t)_{t\in [0,T]}$-adapted stochastic process
	with continuous sample paths such that for all $t\in[0,T]$ it holds $\PP$-a.s.~that 
	\begin{align*}
		\Xb(t) = \Xb(0) + \int_0^t \at\bigl(s,\Xb(s)\bigr)\,\dd s + \int_0^t \bt\bigl(s,\Xb(s)\bigr)\,\dd W(s).
	\end{align*}
	Applying Proposition~\ref{prop:localization} with  $I=I_3$ shows
	\begin{align*}
		\PP\big( \forall\,t\in[0,T]\colon \Xb(t)=X(t) \big)>0
	\end{align*}
	and hence
	\begin{align}\label{eq:ft-prob1}
		\PP\big( \Xb(T)=X(T) \big)>0.
	\end{align}
	Moreover, Proposition~\ref{prop:ft-global} with $X=\Xb$, $a=\at$, $b=\bt$, $I=I_3$, $\varepsilon=\PP(\Xb(T)=X(T))/2$
	yields the existence of a constant $c\in (0,\infty)$ such that for all $n\in\N$ and for all $\Xh_n(T)\in\Aa_n(\R,W)$ we have
	\begin{align}\label{eq:ft-prob2}
		\PP(|\Xb(T)-\Xh_n(T)|\geq c/n) \geq 1- \PP\big(\Xb(T)=X(T)\big)/2.
	\end{align}
	Combining \eqref{eq:ft-prob1} with \eqref{eq:ft-prob2} shows that for all $n\in\N$ and for all $\Xh_n(T)\in \Aa_n(\R,W)$ it holds that
	\begin{align*}
		\PP\bigl( |X(T)-\Xh_n(T)| \geq c/n \bigr)
			&\geq \PP\bigl( \{|X(T)-\Xh_n(T)|\geq c/n\} \cap \{X(T) = \Xb(T)\} \bigr) \\
		&= \PP\bigl( \{|\Xb(T)-\Xh_n(T)|\geq c/n\} \cap \{X(T) = \Xb(T)\} \bigr) \\
		&\geq \PP\bigl( |\Xb(T)-\Xh_n(T)|\geq c/n \bigr) + \PP\bigl( X(T)=\Xb(T) \bigr) -1 \\
				&\ge \PP\bigl( \Xb(T)=X(T) \bigr)/2 > 0,
	\end{align*}
	which completes the proof.
\end{proof}

Proposition~\ref{prop:ft-local} provides a lower error bound for methods from the class $\Aa_n(\R,W)$, i.e., for adaptive methods that are based on $n$ evaluations of $W$.
We now extend this result to the class $\Ac_n(\R,X(0),W)$ of adaptive methods that may us
$n$ evaluations of $W$, on average, which is our main result for pointwise approximation.

\begin{thm}[Lower error bound for pointwise approximation]\label{thm:ft-local}
Assume the setting in Section~\ref{sec:setting}. Let $t_0\in[0,T)$
	and let $\emptyset\neq I\subseteq\R$ be an open interval such that
	the conditions~(A\ref{cond:regularity2}*),~(A\ref{cond:ft2}*), and~(A\ref{cond:init2}*) from Proposition~\ref{prop:ft-local} are satisfied.
	Then there exist constants $\bar{c},\bar{\gamma}\in(0,\infty)$ such that for all $n\in\N$
	and for all $\Xh_n(T)\in\Ac_n(\R,X(0),W)$ we have
	\begin{align*}
		\PP\bigl( |X(T)-\Xh_n(T)|\geq \bar{c}/n \bigr) \geq \bar{\gamma}.
	\end{align*}
	In particular, for $\hat{c}=\bar{c}\cdot\bar{\gamma}\in(0,\infty)$ we have for all $n\in\N$ that
	\begin{align*}
		\inf_{\Xh_n(T)\in\Ac_n(\R,X(0),W)} {\left\{ \EE\Bigl[ \bigl|X(T)-\Xh_n(T)\bigr| \Bigr] \right\}}
			\geq \hat{c}\cdot n^{-1}.
	\end{align*}
\end{thm}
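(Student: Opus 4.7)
The plan is to reduce the claim for adaptive methods with varying cardinality in $\Ac_n(\R,X(0),W)$ to the analogous claim for fixed-cardinality methods in $\Aa_N(\R,W)$, for which Proposition~\ref{prop:ft-local} already supplies the needed uniform lower bound. Write $(c_0,\gamma_0)\in(0,\infty)^2$ for the constants produced by that proposition under the present hypotheses (A\ref{cond:regularity2}*)--(A\ref{cond:init2}*). Given $\Xh_n(T)\in\Ac_n(\R,X(0),W)$, determined by sequences $\psi,\chi,\varphi$ as in Section~\ref{sec:methods} with associated stopping index $\nu=\nu(X(0),W)$ satisfying $\EE[\nu]\le n$, I would set $N=\lceil 2n/\gamma_0\rceil$. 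Markov's inequality then yields $\PP(\nu>N)\le\EE[\nu]/N\le\gamma_0/2$.

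Next I would truncate and pad $\Xh_n(T)$ at step $N$ so as to produce a fixed-cardinality method $\widetilde{X}_n(T)\in\Aa_N(\R,W)$. Concretely, define $\tilde\psi_k=\psi_k$ and $\tilde\varphi_k=\varphi_k$ for $k\le N$, together with $\tilde\chi_k=\chi_k$ for $k<N$ and $\tilde\chi_N\equiv\text{STOP}$; generate the evaluation nodes recursively via the $\tilde\psi_k$ and pad the trailing nodes beyond the actual stopping time by the deterministic value $0$ so that exactly $N$ queries to $W$ are made. By construction the resulting random variable $\widetilde{X}_n(T)$ belongs to $\Aa_N(\R,W)$, and on the event $\{\nu\le N\}$ the stopping rules of both methods coincide, so that $\widetilde{X}_n(T)=\Xh_n(T)$ on this event.

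Applying Proposition~\ref{prop:ft-local} to $\widetilde{X}_n(T)\in\Aa_N(\R,W)$ gives $\PP(|X(T)-\widetilde{X}_n(T)|\ge c_0/N)\ge\gamma_0$. Intersecting with $\{\nu\le N\}$ and subtracting $\PP(\nu>N)\le\gamma_0/2$ yields
\[
\PP\bigl(|X(T)-\Xh_n(T)|\ge c_0/N\bigr)\ge\gamma_0/2.
\]
Since $N\le(2+\gamma_0)n/\gamma_0$ for every $n\in\N$, this produces the first conclusion of the theorem with $\bar{c}=c_0\gamma_0/(2+\gamma_0)$ and $\bar{\gamma}=\gamma_0/2$. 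The expected-value bound then follows from Markov's inequality via
\[
\EE\bigl[|X(T)-\Xh_n(T)|\bigr]\ge(\bar{c}/n)\cdot\PP\bigl(|X(T)-\Xh_n(T)|\ge\bar{c}/n\bigr)\ge\bar{c}\bar{\gamma}/n,
\]
which is exactly the asserted bound with $\hat{c}=\bar{c}\bar{\gamma}$.

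The only step that requires any real care is the formal construction of $\widetilde{X}_n(T)$ and the verification that it belongs to $\Aa_N(\R,W)$, i.e., that every padded evaluation time $\tilde\tau_k$ is measurable with respect to the $\sigma$-algebra generated by $\Fc_0$ and $W(\tilde\tau_1),\dots,W(\tilde\tau_{k-1})$. This is a routine bookkeeping exercise: the original $\psi_k$ are measurable functions of the previous observations (which include $X(0)$ as an $\Fc_0$-measurable random variable), and overwriting $\tilde\tau_k$ by the constant $0$ once the truncated stopping rule has fired preserves this measurability. No further analytic input is needed; everything else is a clean combination of Markov's inequality with the probability bound already supplied by Proposition~\ref{prop:ft-local}.
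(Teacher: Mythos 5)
Your proposal is correct and follows essentially the same route as the paper: apply Markov's inequality to the stopping index $\nu$ to control $\PP(\nu> N)$ with $N$ of order $n/\gamma_0$, truncate the adaptive method to a fixed-cardinality method in $\Aa_N(\R,W)$ that agrees with the original on $\{\nu\le N\}$, and then invoke Proposition~\ref{prop:ft-local} together with a union-bound style intersection to transfer the probability bound back, losing only a factor in the constants. The paper's version uses $N=kn$ with $1/k\le\gamma/2$ and pads by outputting the constant $0$, which is the same bookkeeping you describe; the concluding expectation bound via Markov is likewise identical.
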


\begin{proof}
Choose $c,\gamma\in(0,\infty)$ according to Proposition~\ref{prop:ft-local}
	and choose $k\in\N$ such that $1/k\leq \gamma/2$.
	Let $n\in\N$ and $\Xh_n(T)\in\Ac_n(\R,X(0),W)$,
	and let $\nu(X(0),W)$ denote the number of evaluations nodes used by $\Xh_n(T)$, see Section~\ref{sec:methods}.
	By assumption we have $\EE[\nu(X(0),W)]\leq n$.
	Hence the Markov inequality shows
	\begin{align}\label{eq:markov}
		\PP\bigl( \nu(X(0),W)\geq kn \bigr) \leq \frac{\EE[\nu(X(0),W)]}{kn} \leq 1/k \leq \gamma/2.
	\end{align}
	Define  $\Xh_{kn}^*(T)\in\Aa_{kn}(\R,W)$ by
	\begin{align*}
		\Xh_{kn}^*(T)(\omega) =
		\begin{cases}
			\Xh_n(T)(\omega), &\text{if }\nu(X(0),W)(\omega)<kn,\\
			0, &\text{otherwise},
		\end{cases}
	\end{align*}
	Then
	\begin{align*}
		\PP\bigl( \Xh_n(T)=\Xh_{kn}^*(T) \bigr) \geq \PP\bigl( \nu(X(0),W)<kn \bigr) \geq 1-\gamma/2
	\end{align*}
	due to \eqref{eq:markov}. Combining the latter fact with Proposition~\ref{prop:ft-local} yields
	\begin{align*}
		&\PP\bigl( |X(T)-\Xh_n(T)| \geq (c/k)/n \bigr) \\
		&\qquad\geq \PP\bigl( \{|X(T)-\Xh_n(T)|\geq c/(kn)\} \cap \{\Xh_n(T)=\Xh_{kn}^*(T)\} \bigr) \\
		&\qquad= \PP\bigl( \{|X(T)-\Xh_{kn}^*(T)|\geq c/(kn)\} \cap \{\Xh_n(T)=\Xh_{kn}^*(T)\} \bigr) \\
		&\qquad\geq \PP\bigl( |X(T)-\Xh_{kn}^*(T)|\geq c/(kn) \bigr) + \PP\bigl( \Xh_n(T)=\Xh_{kn}^*(T) \bigr) -1 \\
		&\qquad\geq \gamma/2 > 0,
	\end{align*}
	which completes the proof.
\end{proof}

\section{Lower Error Bounds for Strong Approximation Globally in Time}\label{sec:global}

In this section we consider strong approximation of $X$. In view of Section~\ref{sec:methods},
we study lower error bounds of approximation methods $\Xh_n$ belonging to the class $\Ac_n(C([0,T]),X(0),W)$ or $\Ac_n(L_p([0,T]),X(0),W)$.
In both cases we proceed similar to our analysis of one-point approximation in Section~\ref{sec:ft}. We first
prove probability bounds on the error of $\Xh_n$ under restrictive global smoothness assumptions on the coefficients $a$ and $b$ of the SDE~\eqref{eq:SDE} and then switch to the setting of mild local smoothness conditions by employing the localization technique from Appendix~\ref{sec:comparison-result}.

\subsection{$L_\infty$-Approximation}\label{sec:global-sup}

In this section we consider approximation with respect to the maximum distance
on the time interval $[0,T]$.
The following proposition extends mean error bounds from \citet*{MG02} to probability bounds for the error of $\Xh_n$ from $\Aa_n(C([0,T]),W)$.

\begin{prop}[Probability bounds on the $L_\infty$-error of adaptive methods I]\label{prop:sup-global}
	Assume the setting in Section~\ref{sec:setting} with $\EE{\left[|X(0)|^2\right]}<\infty$.
	Assume further that
	\begin{enumerate}[\hspace{.5cm}({B}1)]\addtolength{\itemsep}{0.25\baselineskip}
		\item\label{cond:regularity-sup}
			$\forall\,i\in\{0,1\},\,j\in\{0,1\}\colon\,$the partial derivatives $a^{(i,j)}, b^{(i,j)}$
			exist on $[0,T]\times\R$ and are continuous and bounded,
		\item\label{cond:nondeg-sup} $\inf_{(t,x)\in [0,T]\times\R} |b(t,x)|>0$.
	\end{enumerate}
	Then for every $\varepsilon\in (0,1)$ there exists $c\in(0,\infty)$ such that
	for all $n\in\N$ and for all $\Xh_n\in\Aa_n(C([0,T]),W)$ we have
	\begin{align*}
		\PP\bigl( \|X-\Xh_n\|_\infty\geq c\,\sqrt{\ln (n+1)/n} \bigr) \geq 1-\varepsilon.
	\end{align*}
\end{prop}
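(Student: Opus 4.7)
The plan is to adapt the strategy of Proposition~\ref{prop:ft-global} to the supremum norm. First I would construct, analogously to the scheme $\Xaux_k$ in that proof, an auxiliary process $\Xaux$ on the equidistant mesh of size $T/n$ whose continuous interpolation approximates $X$ in the supremum norm with mean error of order $n^{-1}\sqrt{\ln(n+1)}$---a variant of the upper estimates in \citet*{MG02} for a Milstein-type scheme with Brownian-bridge interpolation under the smoothness assumption (B\ref{cond:regularity-sup}). Since $n^{-1}\sqrt{\ln(n+1)}$ is of strictly smaller order than the target rate $\sqrt{\ln(n+1)/n}$, Markov's inequality reduces the claim to proving the same lower bound for $\|\Xaux-\Xh_n\|_\infty$ in place of $\|X-\Xh_n\|_\infty$.

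Next, as in Proposition~\ref{prop:ft-global}, I would enlarge the evaluation data of $\Xh_n$ by the $n+1$ mesh points of the auxiliary scheme, yielding a common ordered partition $0=\tau_0\le\tau_1\le\ldots\le\tau_N\le T$ with $N\le 2n+1$, and then condition on the associated $\sigma$-algebra $\A$. Conditional on $\A$, the path of $W$ on each subinterval $(\tau_\ell,\tau_{\ell+1})$ is an independent Brownian bridge $Z_\ell$ from $0$ to $0$. A first-order expansion of the SDE identifies the leading part of $\Xaux(t)-\Xh_n(t)$ on $(\tau_\ell,\tau_{\ell+1})$ as $b(\tau_\ell,\Xaux(\tau_\ell))\cdot Z_\ell(t)$ plus an $\A$-measurable constant and remainders of higher order in $h_\ell=\tau_{\ell+1}-\tau_\ell$. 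By (B\ref{cond:nondeg-sup}) this gives the pathwise bound
\[
\|\Xaux-\Xh_n\|_\infty \ge \beta\cdot \max_\ell\sup_{t\in[\tau_\ell,\tau_{\ell+1}]}|Z_\ell(t)| - R,
\]
where $\beta>0$ is the infimum of $|b|$ and $R$ is a remainder whose moments are controlled by (B\ref{cond:regularity-sup}).

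The main probabilistic ingredient is a uniform extremal estimate for independent bridges: for every $\varepsilon>0$ there exists $c>0$ such that, conditionally on any partition with $N\le 2n+1$ subintervals contained in $[0,T]$,
\[
\PP\Bigl(\max_\ell\sup_{t\in[\tau_\ell,\tau_{\ell+1}]}|Z_\ell(t)|\ge c\sqrt{\ln(n+1)/n}\,\Bigm|\,\A\Bigr)\ge 1-\varepsilon.
\]
This I would derive from the classical tail lower bound $\PP(\sup_{t\in[0,h]}|B_h(t)|\ge x)\ge c_0\,e^{-2x^2/h}$ for a Brownian bridge $B_h$ from $0$ to $0$ on $[0,h]$, applied independently on each subinterval. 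Using the constraint $\sum_\ell h_\ell\le T$, a Lagrangian analysis shows that the product $\prod_\ell\bigl(1-c_0\,e^{-2c^2\ln(n+1)/(n\,h_\ell)}\bigr)$ is maximised over admissible partitions at the equispaced choice $h_\ell=T/N$, where it reduces to $\bigl(1-c_0(n+1)^{-2c^2 N/(nT)}\bigr)^N$; for $c$ small and $N\le 2n+1$ this tends to zero with $n$, yielding the bound for $n$ large and, after adjusting $c$, for all $n$.

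The hardest step will be this uniform extremal bound: adaptive algorithms can cluster evaluation points to produce subintervals of very different lengths, and while the balanced partition is intuitively the worst case, carrying out the rigorous Lagrangian analysis of $\prod_\ell(1-c_0 e^{-a/h_\ell})$ subject to $\sum h_\ell\le T$ and $N\le 2n+1$ requires care, as does the passage from the conditional estimate to the unconditional probability bound. A secondary technical point is ensuring the lower-order remainder $R$ admits a uniform moment estimate, which however follows from standard SDE arguments using the global smoothness of $a$ and $b$ provided by (B\ref{cond:regularity-sup}).
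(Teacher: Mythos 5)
Your overall architecture (auxiliary scheme close to $X$ in mean, Markov reduction, conditioning on the algorithm's data, Brownian bridges on the residual subintervals, extremal estimate for the bridge maxima) matches the paper's, but the key probabilistic step contains a genuine gap. You propose to prove a uniform lower bound on $\max_\ell\sup_{t}|Z_\ell(t)|$ over \emph{all} adaptive partitions with $N\le 2n+1$ cells by bounding $\prod_\ell\bigl(1-c_0e^{-2x^2/h_\ell}\bigr)$ with $x=c\sqrt{\ln(n+1)/n}$ and arguing the equispaced partition is the maximiser. This is false: take one cell of length close to $T$ and let the remaining $2n$ cells degenerate. The degenerate factors tend to $1$, and the single large factor tends to $1-c_0e^{-2x^2/T}\to 1-c_0>0$ as $n\to\infty$, so the product does \emph{not} tend to zero and the equispaced partition is not the worst case for your bound. (The underlying probability statement may still hold, because for one long cell $\PP(\sup|Z_\ell|\ge x)\to 1$ as $x\to 0$, but the crude tail bound $\ge c_0e^{-2x^2/h}$ with $c_0<1$ cannot see this, and a rigorous uniform treatment of all intermediate partitions is exactly the difficulty you have not resolved.) The paper sidesteps this entirely: it anchors the argument to the \emph{fixed equidistant mesh} $t_\ell=\ell T/k$ of the auxiliary scheme, observes by counting that at least $k/2$ of these mesh cells contain none of the algorithm's $k/2$ evaluation sites, and on each such cell the bridge residual at the midpoint has conditional variance exactly $1/(4k)$ times $\bh_{k,\ell}^2\ge\delta^2$; Lemma~\ref{lem:normal2}(ii) applied to these $\ge k/2$ independent conditional normals then yields the $\sqrt{\ln k}$ factor. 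No optimisation over partitions is needed.

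Two secondary issues. First, your displayed pathwise inequality $\|\Xaux-\Xh_n\|_\infty\ge\beta\max_\ell\sup_t|Z_\ell(t)|-R$ cannot hold as written: the decomposition on each cell is $b_\ell Z_\ell(t)+C_\ell(t)+(\text{remainder})$ with $C_\ell$ an $\A$-measurable shift that is not small (it contains $\Xh_n$ itself), so the passage from $\sup_t|b_\ell Z_\ell(t)+C_\ell(t)|$ to $\sup_t|b_\ell Z_\ell(t)|$ must be made distributionally via Anderson's inequality conditionally on $\A$, as in Lemma~\ref{lem:normal2}. Second, your Milstein-type auxiliary scheme creates a genuine $Z$-dependent remainder (the term $\tfrac12 bb^{(0,1)}\cdot((W(t)-W(t_\ell))^2-(t-t_\ell))$ is neither $\A$-measurable nor a multiple of $Z_\ell(t)$), which you would additionally have to control; the paper instead uses the continuous-time Euler scheme, for which the decomposition is \emph{exact}, accepting a mean sup-norm error of only $O(k^{-1/2})$ --- this deficit of merely $1/\sqrt{\ln(k+1)}$ relative to the target rate is absorbed in the final probability estimate.
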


\begin{proof}
	Due to a scaling in time we may assume $T=1$.
	For $k\in\N$ we put $t_{\ell}=\ell/k$ for $\ell=0,\dots,k$ and we define a
	continuous-time Euler scheme $\Xhe_k=\bigl(\Xhe_k(t)\bigr)_{t\in [0,1]}$ by $\Xhe_k(0)=X(0)$
	and for $\ell=0,\dots,k-1$ and $t\in (t_\ell,t_{\ell+1}]$ by
	\[
	\Xhe_k(t) = \Xhe_k(t_{\ell}) + a\bigl(t_\ell,\Xhe_k(t_{\ell})\bigr)\cdot (t-t_{\ell})
		+ b\bigl(t_\ell,\Xhe_k(t_{\ell})\bigr)\cdot \bigl(W(t)-W(t_{\ell})\bigr).
	\]
	Moreover, for $\ell =0,\dots, k$ we put
	\[
	\bh_{k,\ell} = b\bigl( t_{\ell},\Xhe_k(t_{\ell}) \bigr).
	\]
	By (B\ref{cond:regularity-sup}) and $\EE{\left[|X(0)|^2\right]}<\infty$ we have
	\begin{align}\label{ext1}
		\exists\,c_1\in(0,\infty) \enspace \forall\,k\in\N\colon\, \EE\bigl[\|X-\Xhe_k\|_\infty\bigr]\le c_1 /\sqrt{k},
	\end{align} 
	see, e.g., \citet*[Theorem~3, page~631]{HMGR00-2}.

	Let $\varepsilon\in(0,1)$ and
	\begin{align*}
		\delta = \inf_{(t,x)\in [0,1]\times\R} |b(t,x)|\in (0,\infty),
	\end{align*}
	see (B\ref{cond:nondeg-sup}), and choose $c_2\in (0,\infty)$ according to Lemma~\ref{lem:normal2}.
	
	Let $k\in 2\N$ and consider an approximation $\Xh_k \in\Aa_{k/2}(C([0,1]), W)$.
	Observe that there exist random variables $\tau_0,\dots,\tau_{3k/2}\colon \Omega\to [0,\infty)$ with
	\begin{enumerate}[(i)]\addtolength{\itemsep}{0.25\baselineskip}
		\item $\tau_\ell$ is measurable with respect to $\sigma\bigl(\Fc_0,W(\tau_0),\dots, W(\tau_{\ell-1})\bigr)$ for all $\ell=0,\dots,3k/2$,
		\item $\tau_\ell=t_\ell$ for all $\ell=0,\dots,k$,
		\item $\Xh_k$ is measurable with respect to the $\sigma$-algebra $\A=\sigma\bigl(\Fc_0,W(\tau_0),\dots, W(\tau_{3k/2})\bigr)$.
	\end{enumerate}
	Let $B=(B(t))_{t\in[0,1]}$ denote the piecewise linear interpolation of $(W(t))_{t\in [0,1]}$ at the nodes $\tau_0,\dots,\tau_{3k/2}$.
	We define the process $Z=(Z(t))_{t\in [0,1]}$ by $Z(t)=W(t)-B(t)$. Moreover, we define the process $U=(U(t))_{t\in [0,1]}$ by
	\begin{multline*}
		U(t) = \Xh_k(t) -\biggl[ \ind_{\{0\}}(t)\cdot X(0)\\
		+ \sum_{\ell=0}^{k-1}\ind_{(t_{\ell},t_{\ell+1}]}(t)\cdot\Bigl(\Xhe_k(t_{\ell})
			+ a\bigl(t_{\ell},\Xhe_k(t_{\ell})\bigr)\cdot(t-t_{\ell}) + \bh_{k,\ell}\cdot \bigl(B(t)-W(t_{\ell})\bigr) \Bigr) \biggr].
	\end{multline*}
	By definition of $\Xhe_{k}$ it holds for all $t\in [0,1]$ that
	\[
	\Xhe_{k}(t)-\Xh_k(t) = \sum_{\ell=0}^{k-1}\Bigl[ \ind_{(t_{\ell},t_{\ell+1}]}(t)\cdot\bh_{k,\ell}\cdot Z(t) \Bigr] - U(t).
	\]
	Let 
	\[
	\mathcal L_k = \bigr\{\ell\in\{0,\dots,k-1\}\colon\, (t_{\ell},t_{\ell+1}) \cap \{\tau_0,\dots, \tau_{3k/2}\} =\emptyset\bigr\}
	\]
	and observe that
	\begin{align}\label{atleasthalf}
		\#\mathcal L_k\geq k/2.
	\end{align}
	Conditioned on $\A$, the values of $\bigl(\Xhe_k(t_\ell)\bigr)_{\ell\in\{0,\dots,k\}}$,
	$U$, $\bigl(\bh_{k,\ell}\bigr)_{\ell\in\{0,\dots,k\}}$, and the evaluation sites $\tau_0 ,\dots,\tau_{3k/2}$ used by $\Xh_k$
	are fixed and the process $Z$ consists of independent Brownian bridges (from $0$ to $0$)
	on the subintervals corresponding to $\tau_0,\dots\tau_{3k/2}$, cf.~\citet*[Lemma~1-2]{yaroslavtseva2017}.
	We conclude that conditioned on $\A$ the set $\mathcal L_k$ is fixed and the random variables
	\begin{align*}
		V_\ell = \bigl(\Xhe_{k}-\Xh_k\bigr)((t_{\ell}+t_{\ell+1})/2),
	\end{align*}
	where $\ell\in\{0,\dots,k-1\}$, are independent and normally distributed with variances satisfying
	\begin{align}\label{ext4}
		\Var \bigl(V_\ell \,\big|\, \A \bigr) \geq  \ind_{\mathcal L_k}(\ell)\cdot \frac{\delta^2}{4k}. 
	\end{align}
	Since $\|\Xhe_{k}-\Xh_k\|_\infty \geq \max_{\ell\in \mathcal L_k}|V_\ell|$, we get
	\begin{align}\label{eq10000}
		\PP\bigl( 2\sqrt{k}\cdot \|\Xhe_{k}-\Xh_k\|_\infty \geq c_2\sqrt{\ln(k/2)} \,\big|\, \A \bigr)
		\geq \PP\bigl( 2\sqrt{k}\cdot \max_{\ell\in\mathcal L_k}|V_\ell| \geq c_2\sqrt{\ln(k/2)} \,\big|\, \A \bigr).
	\end{align}
	Lemma~\ref{lem:normal2} and \eqref{atleasthalf} imply 
	\begin{align}\label{eq002}
		\PP\bigl( 2\sqrt{k}\cdot \max_{\ell\in\mathcal L_k}|V_\ell| \geq c_2\sqrt{\ln(k/2)} \,\big|\, \A \bigr)
			\geq 1-\varepsilon.
	\end{align}
	Combing \eqref{eq10000} and \eqref{eq002} yields
	\begin{align*}
		\PP\bigl(2\sqrt{k}\cdot \|\Xhe_{k} - \Xh_k\|_\infty  \geq c_2\sqrt{\ln(k/2)} \,\big|\, \A \bigr) \geq 1-\varepsilon
	\end{align*}
	and hence
	\begin{align}\label{lowerbound}
		\PP\bigl(2\sqrt{k}\cdot \|\Xhe_{k}-\Xh_k\|_\infty  \geq c_2\sqrt{\ln(k/2)}\bigr) \geq 1-\varepsilon.
	\end{align}
	By \eqref{lowerbound}, there exists a constant $c_3\in (0,\infty)$
	such that for all $k\in\N$ and $\Xh_k\in\Aa_{k}(C([0,1]), W)$ it holds that
	\begin{align*}
		\PP\bigl( \|\Xhe_{k}-\Xh_k\|_\infty \geq c_3\sqrt{\ln(k+1)/k} \bigr) \geq 1-\varepsilon.
	\end{align*}
	The latter fact and the Markov inequality combined with \eqref{ext1} imply that for all $k\in\N$
	and $\Xh_k \in\Aa_{k}(C([0,1]),W)$ it holds that
		\begin{align*}
			\PP\bigl( \|X&-\Xh_k\|_\infty \geq c_3/2\cdot \sqrt{\ln(k+1)/k} \bigr) \\
			&\geq \PP\bigl( \bigl\{\|\Xhe_{k}-\Xh_k\|_\infty \geq c_3\sqrt{\ln(k+1)/k} \bigr\}
				\cap \bigl\{\|X-\Xhe_{k}\|_\infty< c_3/2\cdot \sqrt{\ln(k+1)/k}\bigr\} \bigr) \\
			&\geq \PP\bigl( \|\Xhe_{k}-\Xh_k\|_\infty \geq c_3\sqrt{\ln(k+1)/k} \bigr)
				- \PP\bigl( \|X-\Xhe_{k}\|_\infty\geq c_3/2\cdot \sqrt{\ln(k+1)/k} \bigr) \\
			&\geq 1-\varepsilon - 2/c_3\cdot\sqrt{k/\ln(k+1)}\cdot c_1/\sqrt{k}\\
			&=1-\varepsilon-2c_1/\bigl( c_3\sqrt{\ln(k+1)} \bigr),
	\end{align*}
	which completes the proof.
\end{proof}

Combining the localization technique from Appendix~\ref{sec:comparison-result}
with Proposition~\ref{prop:sup-global} leads to the following result.

\begin{prop}[Probability bounds on the $L_\infty$-error of adaptive methods II]\label{prop:sup-local}
	Assume the setting in Section~\ref{sec:setting}.
	Let $t_0\in[0,T)$, $T_0\in (t_0,T]$ and let $\emptyset\neq I\subseteq\R$ be
	an open interval such that
	\begin{enumerate}[\hspace{.5cm}({B}1*)]\addtolength{\itemsep}{0.25\baselineskip}
		\item\label{cond1} $\forall\,i\in\{0,1\},\,j\in\{0,1\}\colon\,$the partial derivatives $a^{(i,j)}, b^{(i,j)}$
			exist on $[t_0,T_0]\times I$ and are continuous,
		\item\label{cond2} $\forall\,(t,x)\in [t_0,T_0]\times I\colon\, b(t,x)\neq 0$,
		\item\label{cond3} $\PP(X(t_0)\in I)>0$.
	\end{enumerate}
	Then there exist constants $c,\gamma\in(0,\infty)$ such that
	for all $n\in\N$ and for all $\Xh_n\in\Aa_n(C([0,T]),W)$ we have
	\begin{align*}
		\PP\bigl( \|X-\Xh_n\|_\infty \geq c\,\sqrt{\ln(n+1)/n} \bigr) \geq \gamma.
	\end{align*}
\end{prop}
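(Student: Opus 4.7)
My plan is to mimic the proof of Proposition~\ref{prop:ft-local} step by step, invoking Proposition~\ref{prop:sup-global} in place of Proposition~\ref{prop:ft-global}. The first step is the reduction to $t_0=0$, $T_0=T$. Since $\|X-\Xh_n\|_\infty\ge\sup_{t\in[t_0,T_0]}|X(t)-\Xh_n(t)|$, it suffices to lower-bound the supremum on $[t_0,T_0]$; passing to $W'(s)=W(t_0+s)-W(t_0)$, $\Fc'_s=\Fc_{t_0+s}$, $X'(s)=X(t_0+s)$ turns this into an approximation problem on $[0,T_0-t_0]$, and every $\Xh_n\in\Aa_n(C([0,T]),W)$ induces a method in the shifted class using no more evaluations of $W'$ (any evaluation of $W$ at a time in $[0,t_0]$ is $\Fc'_0=\Fc_{t_0}$-measurable, hence absorbed into the initial data). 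So without loss of generality $t_0=0$, $T_0=T$, and (B\ref{cond1}*)--(B\ref{cond3}*) hold on $[0,T]\times I$.

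The localization is then carried out verbatim as in the proof of Proposition~\ref{prop:ft-local}. I choose bounded open intervals $I_3\subseteq\bar{I_3}\subseteq I_2\subseteq\bar{I_2}\subseteq I_1\subseteq\bar{I_1}\subseteq I$ with $\PP(X(0)\in I_3)>0$, use compactness of $[0,T]\times\bar{I_1}$, continuity of $b$, and (B\ref{cond2}*) to assume (after a sign flip if needed) $\inf_{[0,T]\times\bar{I_1}}b>0$, and pick smooth $\eta_1,\eta_2\colon\R\to[0,1]$ with $\eta_1=1$ on $I_2$, $\eta_1=0$ off $I_1$, $\eta_2=0$ on $I_3$, $\eta_2=1$ off $I_2$. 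Setting $\at(t,x)=\eta_1(x)a(t,x)$ and $\bt(t,x)=\eta_1(x)b(t,x)+\eta_2(x)$, I obtain globally defined coefficients that satisfy (B\ref{cond:regularity-sup}) (boundedness of the required partials follows from compactness of $[0,T]\times\bar{I_1}$ and compact support of the cut-offs) and (B\ref{cond:nondeg-sup}) (since $\bt\ge\min(\inf_{[0,T]\times\bar{I_1}}b,1)>0$), and which coincide with $a,b$ on $[0,T]\times I_3$.

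Next I fix $x_0\in I_3$, set $\Xb(0)=X(0)\ind_{\{X(0)\in I_3\}}+x_0\ind_{\{X(0)\notin I_3\}}$, and let $\Xb$ solve the modified SDE with coefficients $\at,\bt$, initial value $\Xb(0)$, and the same driver $W$. Proposition~\ref{prop:localization} with $I=I_3$ yields $p:=\PP(\forall\,t\in[0,T]\colon\Xb(t)=X(t))>0$. Since $\Xb(0)$ is bounded, $\EE[|\Xb(0)|^2]<\infty$, and Proposition~\ref{prop:sup-global} applied to $\Xb$ with $\varepsilon=p/2$ produces $c\in(0,\infty)$ with
\[
\PP\bigl(\|\Xb-\Xh_n\|_\infty\ge c\sqrt{\ln(n+1)/n}\bigr)\ge 1-p/2
\]
for all $n\in\N$ and $\Xh_n\in\Aa_n(C([0,T]),W)$. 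Intersecting with the event $\{\Xb=X\text{ on }[0,T]\}$ and repeating the inclusion-exclusion step from the end of the proof of Proposition~\ref{prop:ft-local} then yields the conclusion with $\gamma:=p/2>0$. The only genuinely new issue is the initial time-reduction to $[t_0,T_0]$; I expect this to be the main technical obstacle, since one must verify that the enriched initial $\sigma$-algebra $\Fc_{t_0}$ (which contains the full path of $W$ on $[0,t_0]$) does not inflate the evaluation budget of the shifted class $\Aa_n$. All other steps are direct transcriptions of the pointwise-approximation argument.
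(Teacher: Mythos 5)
Your proposal is correct and follows essentially the same route as the paper, which likewise reduces to $t_0=0$, $T_0=T$ and then repeats the localization argument of Proposition~\ref{prop:ft-local} verbatim with Proposition~\ref{prop:sup-global} in place of Proposition~\ref{prop:ft-global}. Your explicit justification of the time-shift (evaluations of $W$ on $[0,t_0]$ being absorbed into $\Fc_{t_0}$-measurable initial data of the shifted class $\Aa_n$) correctly fills in a detail the paper leaves implicit.
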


\begin{proof}
	Since the error is measured globally in time, we may assume $T_0=T$. Now, the reasoning is almost identical to the reasoning in the proof of Proposition~\ref{prop:ft-local}.
	Instead of Proposition~\ref{prop:ft-global} we rely on Proposition~\ref{prop:sup-global}.
\end{proof}

Proposition~\ref{prop:sup-local} provides a lower error bound for methods from the class $\Aa_n(C([0,T]),W)$, i.e., for adaptive methods that are based on $n$ evaluations of $W$. 
We now extend this result to the class $\Ac_n(C([0,T]),X(0),W)$ of adaptive methods that may us $n$ evaluations of $W$, on average.
The following theorem is our main result for $L_\infty$-Approximation.

\begin{thm}[Lower error bound for $L_\infty$-approximation]\label{thm:sup-local}
Assume the setting in Section~\ref{sec:setting}.
	Let $t_0\in[0,T)$, $T_0\in (t_0,T]$ and let $\emptyset\neq I\subseteq\R$ be
	an open interval such that
	the conditions~(B\ref{cond1}*),~(B\ref{cond2}*), and~(B\ref{cond3}*) from Proposition~\ref{prop:sup-local} are satisfied.
	Then there exist constants $\bar{c},\bar{\gamma}\in(0,\infty)$ such that for all $n\in\N$
	and for all $\Xh_n\in\Ac_n(C([0,T]),X(0),W)$ we have
	\begin{align*}
		\PP\bigl( \|X-\Xh_n\|_\infty\geq \bar{c}\,\sqrt{\ln(n+1)/n} \bigr) \geq \bar{\gamma}.
	\end{align*}
	In particular, for $\hat{c}=\bar{c}\cdot\bar{\gamma}\in(0,\infty)$ we have for all $n\in\N$ that
	\begin{align*}
		\inf_{\Xh_n\in\Ac_n(C([0,T]),X(0),W)} {\left\{ \EE\bigl[ \|X-\Xh_n\|_\infty \bigr] \right\}}
			\geq \hat{c}\cdot \sqrt{\ln(n+1)/n}.
	\end{align*}
\end{thm}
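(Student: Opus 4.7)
The plan is to adapt the proof of Theorem~\ref{thm:ft-local} almost verbatim, replacing the pointwise rate $1/n$ by $\sqrt{\ln(n+1)/n}$ and the class $\Aa_n(\R,W)$ by $\Aa_n(C([0,T]),W)$, and invoking Proposition~\ref{prop:sup-local} in place of Proposition~\ref{prop:ft-local}. The core idea is a truncation argument that converts an adaptive method with average cost at most $n$ into a non-adaptive method with at most $kn$ evaluations for a suitable large constant $k$.

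First, I would pick constants $c,\gamma\in(0,\infty)$ from Proposition~\ref{prop:sup-local} applied to the present $I$ and $t_0$, and choose an integer $k\in\N$ large enough that $1/k\leq \gamma/2$. Given $n\in\N$ and $\Xh_n\in\Ac_n(C([0,T]),X(0),W)$ with number-of-evaluations variable $\nu(X(0),W)$, the hypothesis $\EE[\nu(X(0),W)]\leq n$ together with Markov's inequality yields $\PP(\nu(X(0),W)\geq kn)\leq 1/k\leq \gamma/2$. Next I would define the truncated method $\Xh_{kn}^{\ast}\in\Aa_{kn}(C([0,T]),W)$ by letting it coincide with $\Xh_n$ on the event $\{\nu(X(0),W)<kn\}$ and equal the zero function in $C([0,T])$ otherwise; the point is that $\Xh_{kn}^{\ast}$ is a genuine method based on at most $kn$ sequential evaluations of $W$, and $\PP(\Xh_n=\Xh_{kn}^{\ast})\geq 1-\gamma/2$.

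Applying Proposition~\ref{prop:sup-local} to $\Xh_{kn}^{\ast}$ produces
\[
\PP\bigl(\|X-\Xh_{kn}^{\ast}\|_\infty\geq c\sqrt{\ln(kn+1)/(kn)}\bigr)\geq \gamma.
\]
Since $\sqrt{\ln(kn+1)/(kn)}\geq (1/\sqrt{k})\sqrt{\ln(n+1)/n}$ for all $n\in\N$, the event above is contained in $\{\|X-\Xh_{kn}^{\ast}\|_\infty\geq (c/\sqrt{k})\sqrt{\ln(n+1)/n}\}$. Intersecting with $\{\Xh_n=\Xh_{kn}^{\ast}\}$ and using a union bound then gives
\[
\PP\bigl(\|X-\Xh_n\|_\infty\geq (c/\sqrt{k})\sqrt{\ln(n+1)/n}\bigr)\geq \gamma-\gamma/2=\gamma/2,
\]
so one may take $\bar c=c/\sqrt{k}$ and $\bar\gamma=\gamma/2$. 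The $L_1$ bound in the ``in particular'' statement follows immediately from the Markov inequality $\EE[Y]\geq a\PP(Y\geq a)$ applied to $Y=\|X-\Xh_n\|_\infty$ and $a=\bar c\sqrt{\ln(n+1)/n}$, yielding the constant $\hat c=\bar c\bar\gamma$.

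There is no genuine obstacle here; the only mild subtlety is verifying that $\sqrt{\ln(kn+1)/(kn)}$ and $\sqrt{\ln(n+1)/n}$ differ only by a constant factor (uniformly in $n$), which is elementary, and that the truncated functional $\Xh_{kn}^{\ast}$ is well-defined as an element of $\Ac^{\text{eq}}$-type of class $\Aa_{kn}(C([0,T]),W)$—this is immediate since on the event $\{\nu(X(0),W)\geq kn\}$ we simply return a constant (the zero function), which is measurable with respect to the required $\sigma$-algebra.
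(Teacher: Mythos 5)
Your proposal is correct and follows exactly the paper's route: the paper proves Theorem~\ref{thm:sup-local} by repeating the truncation argument from the proof of Theorem~\ref{thm:ft-local} (Markov bound on $\nu$, truncated method in $\Aa_{kn}(C([0,T]),W)$, intersection of events) with Proposition~\ref{prop:sup-local} in place of Proposition~\ref{prop:ft-local}. Your additional observation that $\sqrt{\ln(kn+1)/(kn)}\geq k^{-1/2}\sqrt{\ln(n+1)/n}$ is the only rate-specific adjustment needed, and it is verified correctly.
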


\begin{proof}
	The proof of Theorem~\ref{thm:sup-local} is almost identical to the proof of Theorem~\ref{thm:ft-local}.
	Instead of Proposition~\ref{prop:ft-local} we rely on Proposition~\ref{prop:sup-local}.
\end{proof}

\subsection{$L_p$-Approximation}\label{sec:global-Lp}

In this section we consider approximation with respect to the $L_p$-distance
on the time interval $[0,T]$ for $p\in[1,\infty)$.
The following proposition extends mean error bounds from \citet*{HMGR00-1,MG02_habil} to probability bounds for the error of $\Xh_n$ from $\Aa_n(L_p([0,T]),W)$.

\begin{prop}[Probability bounds on the $L_p$-error of adaptive methods I]\label{prop:Lp-global}
	Assume the setting in Section~\ref{sec:setting} with $\EE{\left[|X(0)|^4\right]}<\infty$.
	Assume further that
	\begin{enumerate}[\hspace{.5cm}({C}1)]\addtolength{\itemsep}{0.25\baselineskip}
		\item\label{cond:regularity-Lp} $\forall\,i\in\{0,1\},\,j\in\{0,1,2\}\colon\,$the partial derivatives $a^{(i,j)}, b^{(i,j)}$
			exist on $[0,T]\times\R$ and are continuous and bounded,
		\item\label{cond:nondeg-Lp} $\inf_{(t,x)\in [0,T]\times\R} |b(t,x)|>0$.
	\end{enumerate}
	Then for every $\varepsilon\in (0,1)$ there exists $c\in(0,\infty)$ such that
	for all $p\in[1,\infty)$, for all $n\in\N$, and for all $\Xh_n\in\Aa_n(L_p([0,T]),W)$ we have
	\begin{align*}
		\PP\bigl( \|X-\Xh_n\|_p\geq c/\sqrt{n} \bigr) \geq 1-\varepsilon.
	\end{align*}
\end{prop}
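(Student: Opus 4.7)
My plan parallels the strategy of Propositions~\ref{prop:ft-global} and~\ref{prop:sup-global}: approximate $X$ by a scheme whose mean $L_1$-error decays strictly faster than the target rate $1/\sqrt{n}$, lower bound the error of $\Xh_k$ against this scheme on unsampled subintervals by Gaussian anti-concentration, and combine via the triangle inequality. First I would reduce to $p=1$: since H\"older gives $\|f\|_p \geq \|f\|_1/\max(1,T)$ uniformly in $p\in[1,\infty)$, and since $L_p([0,T]) \hookrightarrow L_1([0,T])$ continuously implies $\Aa_n(L_p([0,T]),W)\subseteq\Aa_n(L_1([0,T]),W)$, it suffices to produce $c\in(0,\infty)$ (depending on $\varepsilon$ but not on $p$) such that $\PP(\|X-\Xh_n\|_1\geq c/\sqrt n)\geq 1-\varepsilon$ for every $n\in\N$ and every $\Xh_n\in\Aa_n(L_1([0,T]),W)$.

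For $k\in 2\N$ and $t_\ell=\ell T/k$ I would introduce the continuous-time Milstein scheme $\Xm_k$ on this grid. The additional regularity of $b^{(0,2)}$ available under~(C\ref{cond:regularity-Lp}) gives strong order-one convergence,
\begin{align*}
	\EE\|X-\Xm_k\|_1 \leq C_M/k,
\end{align*}
so that by Markov, $\|X-\Xm_k\|_1$ is smaller than any prescribed constant times $1/\sqrt k$ with probability $\geq 1-\varepsilon/2$ once $k$ is large enough. By the triangle inequality and a relabelling $n=k/2$, it then suffices to show that $\|\Xm_k-\Xh_k\|_1\geq c/\sqrt k$ with probability $\geq 1-\varepsilon/2$ for every $\Xh_k\in\Aa_{k/2}(L_1([0,T]),W)$.

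For such a $\Xh_k$ I would adopt the evaluation bookkeeping from the proof of Proposition~\ref{prop:sup-global}: augment its nodes to $\tau_0,\dots,\tau_{3k/2}$ containing the grid, set $\A=\sigma(\Fc_0,W(\tau_0),\dots,W(\tau_{3k/2}))$, write $W=B+Z$ with $B$ the piecewise linear interpolant, and define $\mathcal L_k=\{\ell\colon(t_\ell,t_{\ell+1})\cap\{\tau_j\}=\emptyset\}$, so that $\#\mathcal L_k\geq k/2$ and the restrictions $Z|_{[t_\ell,t_{\ell+1}]}$ for $\ell\in\mathcal L_k$ are independent Brownian bridges conditionally on $\A$. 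Substituting $W(t)-W(t_\ell)=\Delta_\ell(t-t_\ell)/h+Z(t)$ with $h=T/k$ and $\Delta_\ell=W(t_{\ell+1})-W(t_\ell)$ into the Milstein formula yields, for $\ell\in\mathcal L_k$ and $t\in[t_\ell,t_{\ell+1}]$,
\begin{align*}
	\Xm_k(t)-\Xh_k(t) = c_\ell(t)Z(t) + q_\ell Z(t)^2 - u_\ell(t),
\end{align*}
where $c_\ell(t), q_\ell, u_\ell$ are $\A$-measurable, $q_\ell=\tfrac12(b\,b^{(0,1)})(t_\ell,\Xm_k(t_\ell))$ is bounded by~(C\ref{cond:regularity-Lp}), and $c_\ell(t)=b(t_\ell,\Xm_k(t_\ell))+2q_\ell\Delta_\ell(t-t_\ell)/h$. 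A Gaussian union bound on $|\Delta_\ell|/\sqrt h$ shows that $|c_\ell(t)|\geq\delta/2$ uniformly in $\ell,t$ on a high-probability event for $k$ large, where $\delta=\inf|b|>0$ by~(C\ref{cond:nondeg-Lp}); without loss of generality $b\geq\delta$, so the bound is a sign-consistent one.

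On this event $\int_{t_\ell}^{t_{\ell+1}}|\Xm_k-\Xh_k|\,\dd t\geq|G_\ell-U_\ell|-|q_\ell|\int_{t_\ell}^{t_{\ell+1}}Z(t)^2\,\dd t$ with $G_\ell=\int c_\ell(t)Z(t)\,\dd t$ conditionally centred Gaussian of variance $\Var(G_\ell\mid\A)\geq(\delta/2)^2\cdot h^3/12$ (using $\Var(\int_0^h Z(s)\,\dd s)=h^3/12$ for a standard bridge together with non-negativity of the bridge covariance) and $U_\ell=\int u_\ell\,\dd t$ being $\A$-measurable. Lemma~\ref{lem:normal1} then yields $\PP(|G_\ell-U_\ell|\geq c_0\epsilon\delta h^{3/2}\mid\A)\geq 1-\epsilon$ for each $\ell\in\mathcal L_k$; conditional independence of these events and Hoeffding's inequality for sums of Bernoullis give $\sum_\ell|G_\ell-U_\ell|\geq c_1\delta/\sqrt k$ with conditional probability $\geq 1-e^{-ck}$. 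In parallel, $\EE\int_0^h Z(t)^2\,\dd t=h^2/6$ and the boundedness of $q_\ell$ give $\EE\sum_\ell|q_\ell|\int Z(t)^2\,\dd t\leq C/k$, so the correction is $\leq c_1\delta/(2\sqrt k)$ with probability close to one by Markov. Combining these estimates with the control on $\|X-\Xm_k\|_1$ via the triangle inequality yields $\|X-\Xh_k\|_1\geq c'/\sqrt k$ with probability $\geq 1-\varepsilon$, as required. The main obstacle is the non-Gaussian Milstein quadratic correction $q_\ell Z(t)^2$, which obstructs the purely Gaussian anti-concentration argument of Proposition~\ref{prop:sup-global}; the resolution is that this term has $L_1$-mass of order $1/k$, strictly below the target $1/\sqrt k$, and so can be absorbed as a negligible perturbation by a second Markov estimate.
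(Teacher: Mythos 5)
Your proposal is correct and follows the same skeleton as the paper's proof: reduce to $p=1$, use the continuous-time Milstein scheme with mean error $O(1/k)$, condition on the observed data, and apply Gaussian anti-concentration on the at least $k/2$ unsampled subintervals. The one genuine organizational difference lies in how the Milstein quadratic correction is handled. The paper introduces an auxiliary scheme $\Xmaux_k$ obtained by \emph{deleting} the term $\tfrac12(bb^{(0,1)})(t_\ell,\Xm_k(t_\ell))\cdot((W(t)-W(t_\ell))^2-(t-t_\ell))$ on each subinterval; since that term has mean $L_1$-mass $O(1/k)$, the bound $\EE\|X-\Xmaux_k\|_1\le c_3/k$ still holds, and the difference $\Xmaux_k-\Xh_k$ on an unsampled interval is then \emph{exactly} of the form $a_i B_i - f_i$ with a constant, $\A$-measurable coefficient $a_i=b(t_{\ell_i},\Xm_k(t_{\ell_i}))$ satisfying $|a_i|\ge\delta$, so that Lemma~\ref{lem:normal3} applies directly. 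You instead keep the quadratic term on the stochastic side, which forces two extra steps the paper avoids: the coefficient $c_\ell(t)=b(\cdot)+2q_\ell\Delta_\ell(t-t_\ell)/h$ of the bridge is $t$-dependent and must be bounded below via a union bound on $\max_\ell|\Delta_\ell|$, and the residual $q_\ell Z(t)^2$ must be absorbed by a separate Markov estimate using $\EE\int_0^h Z^2=h^2/6$. Both of your additional steps are sound (the variance lower bound for $G_\ell$ via non-negativity of the bridge covariance is correct, and the conditional independence needed for Hoeffding holds), so your route works; it is simply somewhat heavier than the paper's, which buys a clean reduction to the self-contained Lemma~\ref{lem:normal3} (itself resting on Lemma~\ref{lem:normal2}(\ref{lem18parti})) at the price of defining one more reference scheme. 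As in the paper, the case of small $k$ should be dispatched explicitly by adjusting the constant, since your exponential and $O(k^{-1/2})$ probability estimates only kick in for $k$ large.
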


\begin{proof}
	Due to monotonicity in $p$ we may assume $p=1$. Moreover, due to a scaling in time we may assume $T=1$.
	
	For $k\in\N$ we put $t_{\ell}=\ell/k$ for $\ell=0,\dots,k$ and we define a
	continuous-time Milstein scheme $\Xm_k=\bigl(\Xm_k(t)\bigr)_{t\in [0,1]}$ by $\Xm_k(0)=X(0)$
	and for $\ell=0,\dots,k-1$ and $t\in(t_\ell,t_{\ell+1}]$ by
	\begin{multline*}
		\Xm_k(t) = \Xm_k(t_\ell) + a\bigl( t_\ell,\Xm_k(t_\ell) \bigr)\cdot (t-t_\ell)
			+ b\bigl( t_\ell,\Xm_k(t_\ell) \bigr)\cdot \bigl( W(t)-W(t_\ell) \bigr) \\
		+ 1/2\cdot\bigl(bb^{(0,1)}\bigr)\bigl(t_\ell,\Xm_k(t_\ell)\bigr)
				\cdot\bigl(\bigl(W(t)-W(t_\ell)\bigr)^2-(t-t_\ell)\bigr).
	\end{multline*}
	By (C\ref{cond:regularity-Lp}) and $\EE{\left[|X(0)|^4\right]}<\infty$ we have
	\begin{align}\label{rate1}
		\exists\,c_1\in(0,\infty) \enspace \forall\,k\in\N \colon\,
			\sup_{t\in [0,1]}\EE\bigl[|X(t)-\Xm_k(t)|\bigr]  \leq c_1\cdot k^{-1},
	\end{align}
	see, e.g., \citet*[Theorem~4]{HMGR00-1}.
	For $k\in\N$ and $\ell=0,\dots,k-1$ we define the process $U_k^{(\ell)}=\bigl(U_k^{(\ell)}(t)\bigr)_{t\in [t_\ell,t_{\ell+1}]}$ by
	\begin{align*}
		U_k^{(\ell)}(t) 
		&= \Xm_k(t)-1/2\cdot\bigl(bb^{(0,1)}\bigr)\bigl(t_\ell,\Xm_k(t_\ell)\bigr)
			\cdot\bigl(\bigl(W(t)-W(t_\ell)\bigr)^2-(t-t_\ell)\bigr).
	\end{align*}
	Furthermore, for $k\in\N$ we define an auxiliary scheme $\Xmaux_k = \bigl(\Xmaux_k(t)\bigr)_{t\in [0,1]}$
	by $\Xmaux_k(0)=X(0)$ and for $\ell=0,\dots,k-1$ and $t\in (t_\ell,t_{\ell+1}]$ by
	\begin{align*}
		\Xmaux_k(t) = U_k^{(\ell)}(t).
	\end{align*}
	By (C\ref{cond:regularity-Lp}) we have
	\begin{align*}
		c_2 = \sup_{(t,x)\in [0,1]\times\R} |bb^{(0,1)}(t,x)|\in [0,\infty).
	\end{align*}
	Hence we get from \eqref{rate1} for $c_3=c_1+c_2\in(0,\infty)$ and all $k\in\N$ that
	\begin{equation}\label{rate1aux}
\begin{aligned}
			 & \EE\bigl[\|X-\Xmaux_k\|_1\bigr] \\ & \qquad \leq \sup_{t\in [0,1]}\EE\bigl[|X(t)-\Xmaux_k(t)|\bigr]
			 = \sup_{\substack{ \ell\in\{0,\dots,k-1\}\\ t\in(t_\ell,t_{\ell+1}] }} \EE\bigl[|X(t)-U_k^{(\ell)}(t)|\bigr] \\
			&\qquad \leq \sup_{\substack{ \ell\in\{0,\dots,k-1\}\\ t\in(t_\ell,t_{\ell+1}] }} \EE\bigl[|X(t)-\Xm_k(t)|\bigr] \\
			&\qquad\qquad+ \sup_{\substack{ \ell\in\{0,\dots,k-1\}\\ t\in(t_\ell,t_{\ell+1}] }}
				\EE\Bigl[\bigl| 1/2\cdot\bigl(bb^{(0,1)}\bigr)\bigl(t_\ell,\Xm_k(t_\ell)\bigr)
				\cdot\bigl((W(t)-W(t_\ell))^2-(t-t_\ell)\bigr)\bigr|\Bigr] \\
			&\qquad\leq c_1\cdot k^{-1} + \frac{c_2}{2}\cdot 2\cdot k^{-1} = c_3\cdot k^{-1}.
		\end{aligned}
	\end{equation}
	
	Let $\varepsilon\in(0,1)$ and
	\begin{align*}
	 \delta=\inf_{(t,x)\in [0,1]\times\R} |b(t,x)|\in (0,\infty),
	\end{align*}
	see (C\ref{cond:nondeg-Lp}), and choose $c_4\in(0,\infty)$ according to Lemma~\ref{lem:normal3}.

	Let $k\in 2\N$ and consider an approximation $\Xh_k\in\Aa_{k/2}(L_1([0,1]), W)$.
	Observe that there exist random variables $\tau_0,\dots,\tau_{3k/2}\colon \Omega\to [0,\infty)$ with
	\begin{enumerate}[(i)]\addtolength{\itemsep}{0.25\baselineskip}
		\item $\tau_\ell$ is measurable with respect to $\sigma\bigl(\Fc_0,W(\tau_0),\dots, W(\tau_{\ell-1})\bigr)$ for all $\ell=0,\dots,3k/2$,
		\item $\tau_\ell=t_\ell$ for all $\ell=0,\dots,k$,
		\item $\Xh_k$ is measurable with respect to the $\sigma$-algebra $\A=\sigma\bigl(\Fc_0,W(\tau_0),\dots, W(\tau_{3k/2})\bigr)$.
	\end{enumerate}
	For $\ell\in\{0,\dots,k-1\}$ define $d_\ell = \#\{i\in\{0,\dots,3k/2 \}\colon\,\tau_i\in(t_\ell,t_{\ell+1})\}$.
	Let $\ell_1,\dots,\ell_{k/2}\in\{0,\dots,k-1\}$ such that
	$\ell_1<\dots<\ell_{k/2}$, $d_{\ell_i}=0$ for all $i=1,\dots,k/2$,
	and $d_\ell>0$ for all $\ell\in\{0,\dots,\ell_{k/2}\}\setminus\{\ell_1,\dots,\ell_{k/2}\}$.
	We then have
	\begin{align*}
		\|\Xh_k-\Xmaux_k\|_1
			&\geq \sum_{i=1}^{k/2} \int_{t_{\ell_i}}^{t_{\ell_i+1}} |\Xh_k(t)-\Xmaux_k(t)| \,\dd t\\
		&= \sum_{i=1}^{k/2} \int_{t_{\ell_i}}^{t_{\ell_i+1}} |\Xh_k(t)-U_k^{(\ell_i)}(t)| \,\dd t\\
		&= \sum_{i=1}^{k/2}\int_{0}^{1/k} |\Xh_k(t+t_{\ell_i})-U_k^{(\ell_i)}(t+t_{\ell_i})| \,\dd t
	\end{align*}
	and thus
	\begin{align}\label{eq100}
		\begin{aligned}
			&\PP\bigl( \|\Xh_k-\Xmaux_k\|_1\geq c_4/\sqrt{k/2} \,\big|\, \A \bigr) \\
			&\qquad\geq \PP\Bigl( \sum_{i=1}^{k/2}\int_{0}^{1/k}
				|\Xh_k(t+t_{\ell_i})-U_k^{(\ell_i)}(t+t_{\ell_i})|\,\dd t \geq c_4/\sqrt{k/2}\,\Big|\, \A \Bigr).
		\end{aligned}
	\end{align}
	Conditioned on $\A$, the values of $\bigl(\Xm_k(t_\ell)\bigr)_{\ell\in \{0,\dots,k\}}$,
	$W(t_0),\dots,W(t_k)$, $\ell_1,\dots,\ell_{k/2}$, and $\Xh_k$ are fixed
	and the processes $\bigl(B_1(t)\bigr)_{t\in [0,1/k]},\dots,\bigl(B_{k/2}(t)\bigr)_{t\in [0,1/k]}$, given by
	\begin{align*}
		B_i(t) = W(t+t_{\ell_i}) - \bigl( (1-k\cdot t)\cdot W(t_{\ell_i}) + k\cdot t\cdot W(t_{\ell_i+1}) \bigr),
	\end{align*}
	are independent Brownian bridges (from $0$ to $0$), cf.~\citet*[Lemmas~1,2]{yaroslavtseva2017}.
	Furthermore, for $i\in\{1,\dots,k/2\}$ and $t\in [0,1/k]$ it holds that
	\begin{align*}
		U_k^{(\ell_i)}&(t+t_{\ell_i}) - \Xh_k(t+t_{\ell_i}) \\
		&= b\bigl(t_{\ell_i},\Xm_k(t_{\ell_i})\bigr) \cdot B_i(t)
			+ \Bigl[ \Xm_k(t_{\ell_i}) + a\bigl(t_{\ell_i},\Xm_k(t_{\ell_i})\bigr)\cdot t\\
		&\qquad+ b\bigl(t_{\ell_i},\Xm_k(t_{\ell_i})\bigr) \cdot
			\Bigl( (-k\cdot t)\cdot W(t_{\ell_i}) + k\cdot t\cdot W(t_{\ell_i+1}) \Bigr)-\Xh_k(t+t_{\ell_i}) \Bigr].
	\end{align*}
	Hence we get from Lemma~\ref{lem:normal3} that
	\begin{align*}
		\PP\Bigl( \sum_{i=1}^{k/2}\int_{0}^{1/k} |\Xh_k(t+t_{\ell_i})-U_k^{(\ell_i)}(t+t_{\ell_i})|\,\dd t
			\geq c_4/\sqrt{k/2} \,\Big|\,\A \Bigr) \geq 1-\varepsilon.
	\end{align*}
	Combining this with \eqref{eq100} yields
	\begin{align}\label{eqwert}
	\PP\bigl( \|\Xh_k-\Xmaux_k\|_1\geq c_4/\sqrt{k/2} \bigr) \geq 1-\varepsilon.
	\end{align}

	Put $c_5=c_4/\sqrt{2}\in(0,\infty)$. Using the Markov inequality we derive from~\eqref{rate1aux}
	that for all $k\in\N$ we have
	\begin{align}\label{eq:new101}
		\PP\bigl( \|X-\Xmaux_k\|_1\geq c_5/\sqrt{k} \bigr)
			\leq \sqrt{k}/c_5\cdot c_3\cdot k^{-1} = c_3/c_5\cdot k^{-1/2}.
	\end{align}
	From \eqref{eqwert} and \eqref{eq:new101} we get that for all $k\in 2\N\cap[(c_3/(c_5\varepsilon))^2,\infty)$
	and $\Xh_k\in\Aa_{k/2}(L_1([0,1]), W)$ it holds that
	\begin{align*}
		\PP\bigl( \|X-\Xh_k\|_1\geq c_5/\sqrt{k} \bigr)
			&\geq \PP\bigl( \bigl\{\|\Xmaux_{k}-\Xh_k\|_1\geq 2c_5/\sqrt{k}\bigr\}
				\cap \bigl\{\|X-\Xmaux_{k}\|_1< c_5/\sqrt{k}\bigr\} \bigr) \\
		&\geq \PP\bigl( \|\Xmaux_{k}-\Xh_k\|_1\geq 2c_5/\sqrt{k} \bigr) - \PP\bigl( \|X-\Xmaux_{k}\|_1\geq c_5/\sqrt{k} \bigr)\\
		&\geq 1-\varepsilon - c_3/c_5\cdot k^{-1/2} \geq 1-2\varepsilon,
	\end{align*}
	which completes the proof.
\end{proof}

Combining the localization technique from Appendix~\ref{sec:comparison-result}
with Proposition~\ref{prop:Lp-global} leads to the following result.

\begin{prop}[Probability bounds on the $L_p$-error of adaptive methods II]\label{prop:Lp-local}
	Assume the setting in Section~\ref{sec:setting}.
	Let $t_0\in[0,T)$, $T_0\in (t_0,T]$ and let $\emptyset\neq I\subseteq\R$ be
	an open interval such that
	\begin{enumerate}[\hspace{.5cm}({C}1*)]\addtolength{\itemsep}{0.25\baselineskip}
		\item $\forall\,i\in\{0,1\},\,j\in\{0,1,2\}\colon\,$the partial derivatives $a^{(i,j)}, b^{(i,j)}$
			exist on $[t_0,T_0]\times I$ and are continuous,
		\item $\forall\,(t,x)\in [t_0,T_0]\times I\colon\, b(t,x)\neq 0$,
		\item $\PP(X(t_0)\in I)>0$.
	\end{enumerate}
	Then there exist constants $c,\gamma\in(0,\infty)$ such that
	for all $p\in[1,\infty)$, for all $n\in\N$, and for all $\Xh_n\in\Aa_n(L_p([0,T]),W)$ we have
	\begin{align*}
		\PP\bigl( \|X-\Xh_n\|_p \geq c/\sqrt{n} \bigr) \geq \gamma.
	\end{align*}
\end{prop}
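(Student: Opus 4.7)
The plan is to mimic the structure of the proof of Proposition~\ref{prop:ft-local}, replacing the appeal to Proposition~\ref{prop:ft-global} with an appeal to Proposition~\ref{prop:Lp-global}. First, since the $L_p$-norm on $[0,T]$ dominates the $L_p$-norm on the subinterval $[t_0,T_0]$, a lower bound that is proved assuming $t_0=0$ and $T_0=T$ immediately implies the general statement (after restarting the SDE at time $t_0$). So I would reduce to the case $t_0=0$, $T_0=T$.

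Next, using (C\ref{cond1}*)--(C\ref{cond3}*) and the openness of $I$, I would choose bounded open intervals with $I_3\subseteq\bar{I_3}\subseteq I_2\subseteq\bar{I_2}\subseteq I_1\subseteq\bar{I_1}\subseteq I$ and $\PP(X(0)\in I_3)>0$. The continuity of $b$ together with $b(t,x)\neq 0$ on $[0,T]\times I$ gives $\inf_{(t,x)\in [0,T]\times\bar{I_1}}|b(t,x)|>0$ (possibly after replacing $b$ by $-b$, which is harmless). I would then take $C^\infty$ cut-off functions $\eta_1,\eta_2\colon\R\to[0,1]$ with $\eta_1=1$ on $I_2$, $\eta_1=0$ off $I_1$, $\eta_2=0$ on $I_3$, $\eta_2=1$ off $I_2$, and define globally modified coefficients $\tilde a(t,x)=\eta_1(x)a(t,x)$ and $\tilde b(t,x)=\eta_1(x)b(t,x)+\eta_2(x)$. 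By construction, all derivatives $\tilde a^{(i,j)},\tilde b^{(i,j)}$ for $i\in\{0,1\}$, $j\in\{0,1,2\}$ exist, are continuous, and are bounded on $[0,T]\times\R$ (so (C\ref{cond:regularity-Lp}) holds), and $\inf_{(t,x)}|\tilde b(t,x)|>0$ (so (C\ref{cond:nondeg-Lp}) holds). The coefficients $\tilde a,\tilde b$ coincide with $a,b$ on $[0,T]\times I_3$.

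Fix any $x_0\in I_3$ and let $\bar X(0)=X(0)\cdot\ind_{\{X(0)\in I_3\}}+x_0\cdot\ind_{\{X(0)\notin I_3\}}$, which is bounded and hence satisfies $\EE[|\bar X(0)|^4]<\infty$. Let $\bar X$ be an $(\Fc_t)$-adapted continuous solution of the SDE with coefficients $\tilde a,\tilde b$ and initial value $\bar X(0)$ (existence and uniqueness follow from the global Lipschitz and boundedness properties of $\tilde a,\tilde b$). Proposition~\ref{prop:localization} from the appendix, applied with the set $I_3$, yields $\PP(\forall\,t\in[0,T]\colon \bar X(t)=X(t))>0$; call this probability $2\varepsilon_0\in(0,1]$. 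Now applying Proposition~\ref{prop:Lp-global} to $\bar X$ with $\varepsilon=\varepsilon_0$ provides $c\in(0,\infty)$ such that for every $p\in[1,\infty)$, $n\in\N$, and $\Xh_n\in\Aa_n(L_p([0,T]),W)$,
\begin{equation*}
\PP\bigl(\|\bar X-\Xh_n\|_p\geq c/\sqrt{n}\bigr)\geq 1-\varepsilon_0.
\end{equation*}

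Finally, intersecting with the event $\{\bar X=X\text{ on }[0,T]\}$ and using the elementary bound $\PP(A\cap B)\geq \PP(A)+\PP(B)-1$ gives
\begin{equation*}
\PP\bigl(\|X-\Xh_n\|_p\geq c/\sqrt{n}\bigr)\geq (1-\varepsilon_0)+2\varepsilon_0-1=\varepsilon_0=:\gamma>0,
\end{equation*}
uniformly in $p,n,\Xh_n$, which is the claim. The main obstacle is ensuring the modified coefficients $\tilde a,\tilde b$ simultaneously fulfill (C\ref{cond:regularity-Lp}) and (C\ref{cond:nondeg-Lp}); this is handled by the two-cut-off construction above (the additive term $\eta_2$ is needed precisely to secure nondegeneracy outside $I_2$, while $\eta_1$ takes care of smoothness and boundedness). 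The rest is routine bookkeeping parallel to the proof of Proposition~\ref{prop:ft-local}.
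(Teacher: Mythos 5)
Your proposal is correct and follows exactly the route the paper takes: the paper's proof of Proposition~\ref{prop:Lp-local} simply reduces to $T_0=T$ and then repeats the localization argument of Proposition~\ref{prop:ft-local} verbatim, with Proposition~\ref{prop:Lp-global} in place of Proposition~\ref{prop:ft-global}. Your spelled-out version (nested intervals, the two cut-offs $\eta_1,\eta_2$, the bounded modified initial value securing the moment condition $\EE[|\bar X(0)|^4]<\infty$, Proposition~\ref{prop:localization}, and the $\PP(A\cap B)\geq\PP(A)+\PP(B)-1$ step) matches the paper's intended argument in every detail.
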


\begin{proof}
	Since the error is measured globally in time, we may assume $T_0=T$. Now, the reasoning is almost identical to the reasoning in the proof of Proposition~\ref{prop:ft-local}.
	Instead of Proposition~\ref{prop:ft-global} we rely on Proposition~\ref{prop:Lp-global}.
\end{proof}

Proposition~\ref{prop:Lp-local} provides a lower error bound for methods from the class $\Aa_n(L_p([0,T]),W)$, i.e., for adaptive methods that are based on $n$ evaluations of $W$. 
We now extend this result to the class $\Ac_n(L_p([0,T]),X(0),W)$ of adaptive methods that may us $n$ evaluations of $W$, on average. The following theorem is our main result for $L_p$-Approximation with $p\in [1,\infty)$.

\begin{thm}[Lower error bound for $L_p$-approximation]\label{thm:Lp-local}
Assume the setting in Section~\ref{sec:setting}.
	Let $t_0\in[0,T)$, $T_0\in (t_0,T]$ and let $\emptyset\neq I\subseteq\R$ be
	an open interval such that	the conditions~(C\ref{cond1}*),~(C\ref{cond2}*), and~(C\ref{cond3}*) from Proposition~\ref{prop:Lp-local} are satisfied.
	Then there exist constants $\bar{c},\bar{\gamma}\in(0,\infty)$ such that
	for all $p\in[1,\infty)$, for all $n\in\N$, and for all $\Xh_n\in\Ac_n(L_p([0,T]),X(0),W)$ we have
	\begin{align*}
		\PP\bigl( \|X-\Xh_n\|_p\geq \bar{c}/\sqrt{n} \bigr) \geq \bar{\gamma}.
	\end{align*}
	In particular, for $\hat{c}=\bar{c}\cdot\bar{\gamma}\in(0,\infty)$ we have for all $p\in[1,\infty)$ and for all $n\in\N$ that
	\begin{align*}
		\inf_{\Xh_n\in\Ac_n(L_p([0,T]),X(0),W)} {\left\{ \EE\bigl[ \|X-\Xh_n\|_p \bigr] \right\}}
			\geq \hat{c}\cdot n^{-1/2}.
	\end{align*}
\end{thm}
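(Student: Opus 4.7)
The plan is to proceed along exactly the same lines as the proof of Theorem~\ref{thm:ft-local}, replacing the pointwise probability bound from Proposition~\ref{prop:ft-local} by the $L_p$ probability bound from Proposition~\ref{prop:Lp-local}. A convenient feature of Proposition~\ref{prop:Lp-local} is that the constants $c$ and $\gamma$ it provides do not depend on $p\in[1,\infty)$, so the constants $\bar c$ and $\bar\gamma$ we produce will inherit this uniformity and apply simultaneously to every $p$.

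More precisely, I would fix $c,\gamma\in(0,\infty)$ as given by Proposition~\ref{prop:Lp-local} and choose $k\in\N$ with $1/k\le \gamma/2$. Then for any $n\in\N$, any $p\in[1,\infty)$, and any $\Xh_n\in\Ac_n(L_p([0,T]),X(0),W)$, I would let $\nu=\nu(X(0),W)$ denote the random number of evaluations of $W$ used by $\Xh_n$, which by definition of the class $\Ac_n$ satisfies $\EE[\nu]\le n$. Markov's inequality then yields $\PP(\nu\ge kn)\le \EE[\nu]/(kn)\le 1/k\le \gamma/2$.

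Next, I would define a truncated method $\Xh_{kn}^*\colon\Omega\to L_p([0,T])$ by putting $\Xh_{kn}^*=\Xh_n$ on $\{\nu<kn\}$ and $\Xh_{kn}^*=0$ otherwise; then $\Xh_{kn}^*\in\Aa_{kn}(L_p([0,T]),W)$ and $\PP(\Xh_n=\Xh_{kn}^*)\ge 1-\gamma/2$. Applying Proposition~\ref{prop:Lp-local} to $\Xh_{kn}^*$ gives $\PP(\|X-\Xh_{kn}^*\|_p\ge c/\sqrt{kn})\ge \gamma$, and combining the two estimates via the elementary inclusion $\PP(A\cap B)\ge \PP(A)+\PP(B)-1$ applied to $A=\{\|X-\Xh_{kn}^*\|_p\ge c/\sqrt{kn}\}$ and $B=\{\Xh_n=\Xh_{kn}^*\}$ yields
\begin{align*}
\PP\bigl(\|X-\Xh_n\|_p\ge (c/\sqrt{k})\cdot n^{-1/2}\bigr) \ge \gamma - \gamma/2 = \gamma/2.
\end{align*}
Setting $\bar c=c/\sqrt{k}$ and $\bar\gamma=\gamma/2$ gives the stated probability bound, and the expectation bound then follows immediately from the Markov-type inequality $\EE[Y]\ge t\cdot\PP(Y\ge t)$ applied to the nonnegative random variable $Y=\|X-\Xh_n\|_p$ at $t=\bar c\cdot n^{-1/2}$.

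I do not anticipate any genuine obstacle, since all the analytical work has already been carried out in Proposition~\ref{prop:Lp-local}; what remains is the routine reduction from the class $\Aa$ (deterministic cardinality) to the class $\Ac$ (average cardinality) via truncation at $kn$ evaluations and Markov's inequality, exactly mirroring the argument used for Theorem~\ref{thm:ft-local}. The only minor bookkeeping issue is to keep the constants independent of $p$, which is automatic since neither $c,\gamma$ (from Proposition~\ref{prop:Lp-local}) nor the choice of $k$ depends on $p$.
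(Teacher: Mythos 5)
Your proposal is correct and follows exactly the route the paper takes: the paper's proof of Theorem~\ref{thm:Lp-local} is literally the proof of Theorem~\ref{thm:ft-local} with Proposition~\ref{prop:ft-local} replaced by Proposition~\ref{prop:Lp-local}, which is precisely the truncation-plus-Markov argument you spell out. Your observation that the constants from Proposition~\ref{prop:Lp-local} are independent of $p$, so that $\bar c$ and $\bar\gamma$ are uniform in $p$, is the only point requiring care and you handle it correctly.
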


\begin{proof}
	The proof of Theorem~\ref{thm:Lp-local} is almost identical to the proof of Theorem~\ref{thm:ft-local}.
	Instead of Proposition~\ref{prop:ft-local} we rely on Proposition~\ref{prop:Lp-local}.
\end{proof}

\subsection{Maximum Pointwise Error}\label{sec:global-sup-outside}

The following theorem is a consequence of
Theorem~\ref{thm:Lp-local} and Fubini's theorem.

\begin{thm}[Lower  bound for  the maximum pointwise  error]\label{thm:uniform-pointwise}
Assume the setting in Section~\ref{sec:setting}.
	Let $t_0\in[0,T)$, $T_0\in (t_0,T]$ and let $\emptyset\neq I\subseteq\R$ be
	an open interval such that	the conditions~(C\ref{cond1}*),~(C\ref{cond2}*), and~(C\ref{cond3}*) from Proposition~\ref{prop:Lp-local} are satisfied.
	Then there exists a constant $\hat{c}\in(0,\infty)$ such that
	for all $n\in\N$ we have
	\begin{align*}
		\inf_{\Xh_n\in\Ac_n(C([0,T]),X(0),W)}\;\biggl\{ \sup_{t\in[0,T]} \EE\bigl[|X(t)-\Xh_n(t)|\bigr] \biggr\}
			\geq \hat{c}\cdot n^{-1/2}.
	\end{align*}
\end{thm}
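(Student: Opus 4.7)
The plan is to deduce the maximum pointwise bound directly from the $L_1$-lower bound provided by Theorem~\ref{thm:Lp-local} with $p=1$. The key observation is that an $L_1$-lower bound on the expected pathwise error, combined with Tonelli's theorem, immediately produces a lower bound on the supremum of expected pointwise errors, at the cost of a factor $1/T$.

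Concretely, fix $n\in\N$ and an arbitrary method $\Xh_n\in\Ac_n(C([0,T]),X(0),W)$. The continuous embedding $C([0,T])\hookrightarrow L_1([0,T])$ is Borel-Borel measurable, so $\Xh_n$ may also be viewed as an element of $\Ac_n(L_1([0,T]),X(0),W)$: the underlying triple $(\psi,\chi,\varphi)$ and stopping rule are unchanged, only the range space and its Borel $\sigma$-field are reinterpreted. Consequently, Theorem~\ref{thm:Lp-local} (with $p=1$) yields a constant $\tilde{c}\in(0,\infty)$, independent of $n$ and of the method, such that
\begin{align*}
\EE\bigl[\|X-\Xh_n\|_1\bigr] \geq \tilde{c}\cdot n^{-1/2}.
\end{align*}

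Next, since $X$ has continuous sample paths and $\Xh_n$ takes values in $C([0,T])$, the map $(t,\omega)\mapsto |X(t,\omega)-\Xh_n(t,\omega)|$ is nonnegative and jointly measurable on $[0,T]\times\Omega$. Tonelli's theorem therefore gives
\begin{align*}
\EE\bigl[\|X-\Xh_n\|_1\bigr]
= \int_0^T \EE\bigl[|X(t)-\Xh_n(t)|\bigr]\,\dd t
\leq T\cdot \sup_{t\in[0,T]} \EE\bigl[|X(t)-\Xh_n(t)|\bigr].
\end{align*}
Combining this with the previous display yields $\sup_{t\in[0,T]} \EE[|X(t)-\Xh_n(t)|] \geq (\tilde{c}/T)\cdot n^{-1/2}$, and taking the infimum over $\Xh_n\in\Ac_n(C([0,T]),X(0),W)$ establishes the theorem with $\hat{c}=\tilde{c}/T$.

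There is no substantive obstacle here: the only minor point to verify is the joint measurability required for Tonelli, which is automatic from continuity of the paths of both $X$ and $\Xh_n$. The real content of the statement has already been absorbed into Theorem~\ref{thm:Lp-local}; the present result is an essentially formal consequence.
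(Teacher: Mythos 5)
Your proof is correct and follows exactly the paper's own argument: apply Theorem~\ref{thm:Lp-local} with $p=1$ after viewing the $C([0,T])$-valued method as an $L_1([0,T])$-valued one, then use Fubini/Tonelli to bound $\EE[\|X-\Xh_n\|_1]$ by $T\cdot\sup_{t\in[0,T]}\EE[|X(t)-\Xh_n(t)|]$. The additional remarks on measurability of the embedding and joint measurability for Tonelli are fine and only make explicit what the paper leaves implicit.
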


\begin{proof}
	Let $n\in\N$ and consider an approximation $\Xh_n\in\Ac_n(C([0,T]),X(0),W)$.
	Fubini's theorem shows
	\[
		\EE\bigl[ \|X-\Xh_n\|_1 \bigr] 
		= \int_0^T \EE\bigl[ |X(t)-\Xh_n(t)| \bigr] \,\dd t
			\leq T\cdot \sup_{t\in[0,T]} \EE\bigl[|X(t)-\Xh_n(t)|\bigr].
	\]
	Applying Theorem~\ref{thm:Lp-local} for $p=1$ completes the proof.
\end{proof}

\section{Examples}\label{sec:examples}

We study Cox-Ingersoll-Ross processes in Section~\ref{sec:cir},
equations with superlinearly growing coefficients in Section~\ref{sec:superlinear},
and equations with discontinuous coefficients in Section~\ref{sec:discontinuous}.
Throughout this section we assume the setting in Section~\ref{sec:setting}.

\subsection{Cox-Ingersoll-Ross Processes}\label{sec:cir}

Cox-Ingersoll-Ross processes are widely used in mathematical finance, e.g.,
in the Cox-Ingersoll-Ross model for short term interest rates or as the
instantaneous variance in the Heston model. Such processes are described by the SDE
\begin{align}\label{eq:cir}
	\dd X(t) = (\delta-\beta X(t))\,\dd t +\sigma\sqrt{|X(t)|}\,\dd W(t), \qquad X(0)=x_0,
\end{align}
with initial value $x_0\in (0,\infty)$ and parameters $\delta,\sigma\in (0,\infty)$,
$\beta\in [0,\infty)$. In this case the coefficients $a,b$ are given by
\begin{align*}
	a(t,x) = a(x) = \delta-\beta\cdot x
		\qquad\text{and}\qquad
	b(t,x) = b(x) = \sigma\cdot\sqrt{|x|}
\end{align*}
for every $(t,x)\in [0,T]\times\R$.
It is well-known that strong existence and pathwise uniqueness hold for the SDE~\eqref{eq:cir}.
Furthermore, one has
\begin{align*}
\PP\!\left(\forall\,t\in [0,\infty)\colon X(t)\geq 0\right)=1.
\end{align*}
Due to a simple scaling
we may restrict ourselves to the case
\begin{align*}
	\sigma = 2,
\end{align*}
which we assume in the following.

In the context of strong approximation, it turns out that
$\delta$ is the crucial parameter with respect to the rate of convergence,
see, e.g.,
Corollary~\ref{cor:CIR-global}(\ref{cor15partiv})  below.

\subsubsection{Strong Approximation at a Single Point}

Note that the coefficients $a,b$ of the SDE~\eqref{eq:cir} are infinitely differentiable on $(0,\infty)$. For $x\in(0,\infty)$ we obtain
\[
	\bigl( a'b - a\:\!b' - \tfrac{1}{2} b^2b'' \bigr)(x)
		= -\beta\sqrt{x} + \frac{1-\delta}{\sqrt{x}}.
\]
In particular, it holds that
\begin{equation}\label{eq:cond1}
\begin{aligned}
	\exists\,\emptyset\neq I\subseteq (0,\infty)\text{ open interval}\enspace
		\forall\,x\in I\colon\, \bigl( a'b - a\:\!b' - \tfrac{1}{2} b^2b'' \bigr)(x) \neq 0 & \\
		\Leftrightarrow \hspace*{5cm}& \\
		 \hspace*{2cm} \delta\neq 1\text{ or }\beta\neq0. \hspace*{4cm} &
\end{aligned}
\end{equation}
The marginal distributions of the SDE~\eqref{eq:cir} are known explicitly in terms of Lebesgue-densities, see, e.g., \citet*[Equation~(4) and Section~A.2]{yor}.
The latter results immediately yield
\begin{align}\label{eq:density}
	\forall\,\emptyset\neq I\subseteq (0,\infty)\text{ open interval}\enspace
		\forall\,t\in(0,\infty)\colon\, \PP( X(t)\in I ) > 0.
\end{align}
In case of $\delta\neq 1$ or $\beta\neq0$ we get from \eqref{eq:cond1} and \eqref{eq:density}
that the assumptions of Theorem~\ref{thm:ft-local} are fulfilled for every $t_0\in(0,T)$,
which yields the lower bound stated in part~\eqref{cor14parti} of the following corollary.
Combining this lower bound with the upper bound of \citet*[Theorem~2]{Alf13}
yields a sharp result for $\delta>4$, see part~\eqref{cor14partii}.

\begin{cor}[CIR processes, pointwise approximation]\label{cor:CIR-pointwise}\ 
	\begin{enumerate}[(i)]\addtolength{\itemsep}{0.25\baselineskip}
	\item\label{cor14parti} If $\delta\neq 1$ or $\beta\neq0$, then there exists a constant $c\in(0,\infty)$
		such that for all $n\in\N$ we have
		\begin{align*}
			\inf_{\Xh_n(T)\in\Ac_n(\R,X(0),W)} {\left\{ \EE\Bigl[ \bigl|X(T)-\Xh_n(T)\bigr| \Bigr] \right\}}
				\geq c \cdot n^{-1}.
		\end{align*}
	\item\label{cor14partii} If $\delta>4$, then there exist constants $c,C\in(0,\infty)$ such that
		for all $n\in\N$ we have
		\begin{align*}
			c\cdot n^{-1} &\leq \inf_{\Xh_n(T)\in\Ac_n(\R,X(0),W)}
				{\left\{ \EE\Bigl[ \bigl|X(T)-\Xh_n(T)\bigr| \Bigr] \right\}}\\
			&\leq \inf_{\Xh_n(T)\in \Ac^\mathrm{eq}_n(\R,X(0),W)}
				{\left\{ \EE \bigl[\vert X(T)-\Xh_n(T)\vert\bigr] \right\}} \leq C\cdot n^{-1}.
		\end{align*}
	\end{enumerate}
\end{cor}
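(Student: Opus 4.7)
The plan is to recognize that the corollary is essentially an immediate application of Theorem~\ref{thm:ft-local}, after a short verification of the hypotheses, combined with an already-available upper bound from the literature. The work has largely been done in the paragraph preceding the corollary.

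For part~\eqref{cor14parti}, I would fix an arbitrary $t_0\in(0,T)$ and hunt for an open interval $\emptyset\neq I\subseteq(0,\infty)$ on which to invoke Theorem~\ref{thm:ft-local}. The key observation is that the Lie-bracket function $h(x)=(a'b-ab'-\tfrac12 b^2b'')(x)=-\beta\sqrt{x}+(1-\delta)/\sqrt{x}$ is continuous on $(0,\infty)$ and, under the assumption $\delta\neq 1$ or $\beta\neq 0$, does not vanish identically; in fact it has at most one zero in $(0,\infty)$, so one can choose $I\subseteq(0,\infty)$ as any sufficiently small open subinterval on which $h$ is nonzero. On such an $I$: hypothesis (A\ref{cond:regularity2}*) holds because $a,b\in C^\infty((0,\infty))$; the first half of (A\ref{cond:ft2}*) holds by the choice of $I$, the second half because $b(x)=2\sqrt{x}>0$ for $x\in I$; and (A\ref{cond:init2}*) holds by the explicit Lebesgue-density of $X(t_0)$ on $(0,\infty)$ recalled in~\eqref{eq:density}. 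Theorem~\ref{thm:ft-local} then yields the lower bound $c\cdot n^{-1}$ on $\inf_{\Xh_n(T)\in\Ac_n(\R,X(0),W)}\EE[|X(T)-\Xh_n(T)|]$.

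For part~\eqref{cor14partii}, the assumption $\delta>4$ in particular gives $\delta\neq 1$, so part~\eqref{cor14parti} supplies the lower bound $c\cdot n^{-1}$. The trivial inclusion $\Ac_n^{\mathrm{eq}}(\R,X(0),W)\subseteq\Ac_n(\R,X(0),W)$ yields the middle inequality. The remaining upper bound $C\cdot n^{-1}$ is provided by an explicit equidistant scheme from \citet*[Theorem~2]{Alf13}, which in the regime $\delta>4$ achieves pointwise strong error of order $n^{-1}$; one only has to observe that the scheme used there belongs to $\Ac_n^{\mathrm{eq}}(\R,X(0),W)$.

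There is no genuine obstacle here beyond bookkeeping: the whole argument reduces to checking that the generic hypotheses of Theorem~\ref{thm:ft-local} are met for the CIR coefficients on a suitable open subinterval of $(0,\infty)$, which the paper has already set up, and then citing Alfonsi for the matching upper bound. The only mild subtlety is the case distinction in locating $I$ (when $\beta=0$ and $\delta\neq 1$ the function $h$ has no zero and any $I\subseteq(0,\infty)$ works; when $\delta=1$ and $\beta\neq 0$ the same is true; otherwise $h$ has a unique positive zero $\sqrt{(1-\delta)/\beta}$ and one picks $I$ to avoid it), but this is elementary.
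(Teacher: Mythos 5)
Your proof is correct and follows exactly the paper's own route: verify (A1*)--(A3*) for the CIR coefficients on a small open interval $I\subseteq(0,\infty)$ avoiding the (at most one) zero of the Lie bracket, using the explicit marginal densities for (A3*), then apply Theorem~\ref{thm:ft-local} and combine with the Alfonsi equidistant upper bound for $\delta>4$. Only a cosmetic slip in an aside: the unique positive zero of $-\beta\sqrt{x}+(1-\delta)/\sqrt{x}$, when it exists, is at $x=(1-\delta)/\beta$ rather than $\sqrt{(1-\delta)/\beta}$, which does not affect the argument.
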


Let us stress that Corollary~\ref{cor:CIR-pointwise}(\ref{cor14partii}) shows that, in case of $\delta >4$, adaptive algorithms
are not superior (up to multiplicative constants) to algorithms that are based on fixed equidistant grids,
similar to the case of SDEs with coefficients that satisfy global Lipschitz assumptions.
Moreover, we recover the usual optimal
convergence rate of~$1$.

If $\delta<1$ then combining the upper bound of \citet*[Theorem~2]{HH16b} with the lower bound of \citet*[Theorem~1]{HJ17}
shows that there exist constants $c,C\in(0,\infty)$ such that
for all $n\in\N$ it holds that
\begin{align}\label{eq:CIR-equi}
	c \cdot n^{-\delta/2} \leq \inf_{\Xh_n(T)\in \Ac^\mathrm{eq}_n(\R,X(0),W)}
		{\left\{ \EE \bigl[\vert X(T)-\Xh_n(T)\vert\bigr] \right\}} \leq C\cdot n^{-\delta/2}.
\end{align}
Hence, for this range of values of $\delta$ the lower bound  in Corollary~\ref{cor:CIR-pointwise}(\ref{cor14parti}) cannot be attained by 
algorithms that are based on equidistant grids and it  
is unclear, up to now, whether the bound in Corollary~\ref{cor:CIR-pointwise}(\ref{cor14parti}) is sharp, i.e., whether  adaptive algorithms can achieve the convergence rate $c/n$ in terms of the average number $n$ of evaluations of the driving Brownian motion that are used.
Note, however, that the result \eqref{eq:adap} on the power of adaptive methods in the case $\delta=1$ and $\beta=0$ provides a positive indication in that sense.

Combining Corollary~\ref{cor:CIR-pointwise}(\ref{cor14partii}) with \eqref{eq:CIR-equi} shows
that the optimal convergence rate for algorithms that are based on equidistant grids
is exactly $\min(\delta/2,1)$ if $\delta\in(0,1)\cup(4,\infty)$.
For $\delta\in(1,4)$, no matching upper and lower bounds are known,
see \citet*[Figure~1]{HH16a}.
Nevertheless, we expect $\min(\delta/2,1)$ to be the best possible convergence rate
for algorithms that are based on equidistant grids for all $\delta\in(0,\infty)$.

Finally, we consider the case of
\begin{align*}
	\delta=1 \qquad\text{and}\qquad \beta=0,
\end{align*}
i.e., the solution of the SDE~\eqref{eq:cir} is a one-dimensional squared Bessel process.
In this case none of the results of Section~\ref{sec:ft} is applicable, see~\eqref{eq:cond1}.
Indeed, in \citet*[Theorem~4]{HH16a}
it is shown that adaptive methods are able to
achieve any polynomial convergence order and hence are not restricted to rate~$1$.
See also \citet*{CHH17}.
More precisely, \citet*[Theorem~4]{HH16a} shows that for all $r\in[1,\infty)$
there exists a constant $C_r\in(0,\infty)$ such that for all $n\in\N$ it holds that
\begin{align}\label{eq:adap}
\inf_{\Xh_n(T)\in \Ac_n(\R,X(0),W)}
		{\left\{ \EE \bigl[\vert X(T)-\Xh_n(T)\vert\bigr] \right\}}
		\leq C_r\cdot n^{-r}.
\end{align}
On the other hand, \citet*[Corollary~1]{HH16a} shows that the best algorithm based on equidistant nodes
converges at rate $1/2$, i.e., there exist constants $c,C\in(0,\infty)$ such that for all $n\in\N$ we have
\begin{align*}
	c \cdot n^{-1/2}\leq \inf_{\Xh_n(T)\in \Ac^\mathrm{eq}_n(\R,X(0),W)}
		{\left\{ \EE \bigl[\vert X(T)-\Xh_n(T)\vert\bigr] \right\}} \leq C\cdot n^{-1/2}.
\end{align*}

\subsubsection{Strong Approximation Globally in Time}

We now turn to lower error bounds that are based on Theorem~\ref{thm:sup-local}
and Theorem~\ref{thm:Lp-local}.
Moreover, these lower bounds are complemented by upper error bounds from the literature.

\begin{cor}[CIR processes, global approximation]\label{cor:CIR-global}\ 
	\begin{enumerate}[(i)]\addtolength{\itemsep}{0.25\baselineskip}
	\item\label{cor15parti} For all $\delta\in (0,\infty)$ and $\beta\in [0,\infty)$ there exists a constant $c\in(0,\infty)$ such that for all $n\in\N$ we have
		\[
			\inf_{\Xh_n\in\Ac_n(C([0,T]),X(0),W)} {\left\{ \EE\bigl[ \|X-\Xh_n\|_\infty \bigr] \right\}}
				\geq c\cdot \sqrt{\ln(n+1)/n}
		\]
		and
		\[
			\inf_{\Xh_n\in\Ac_n(L_1([0,T]),X(0),W)} {\left\{ \EE\bigl[ \|X-\Xh_n\|_1 \bigr] \right\}}
				\geq c\cdot n^{-1/2}.
		\]
		\item\label{cor15partii} If $\delta>2$ and $\beta>0$ or if $\delta=1$, then there exist constants
		$c,C\in (0,\infty)$ such that for all $n\in\N$ we have
		\begin{align*}
			c\cdot \sqrt{\ln(n+1)/n} &\leq \inf_{\Xh_n\in\Ac_n(C([0,T]),X(0),W)}
				{\left\{ \EE\bigl[ \|X-\Xh_n\|_\infty \bigr] \right\}} \\
			&\leq \inf_{\Xh_n\in\Ac^\mathrm{eq}_n(C([0,T]),X(0),W)}
				{\left\{ \EE\bigl[ \|X-\Xh_n\|_\infty \bigr] \right\}} \leq C\cdot \sqrt{\ln(n+1)/n}.
		\end{align*}	
		\item\label{cor15partiii} If $\delta>1$  then there exist constants
		$c,C\in (0,\infty)$ such that for all $n\in\N$ we have
		\begin{align*}
			c\cdot n^{-1/2} &\leq \inf_{\Xh_n\in\Ac_n(L_1([0,T]),X(0),W)}
				{\left\{ \EE\bigl[ \|X-\Xh_n\|_1 \bigr] \right\}} \\
			&\leq \inf_{\Xh_n\in\Ac^\mathrm{eq}_n(L_1([0,T]),X(0),W)}
				{\left\{ \EE\bigl[ \|X-\Xh_n\|_1 \bigr] \right\}} \leq C\cdot n^{-1/2}.
		\end{align*}
	\item\label{cor15partiv} For all $\delta\in (0,\infty)$ and $\beta\in [0,\infty)$ there exist constants $c,C\in (0,\infty)$ such that for all $n\in\N$ we have
\begin{align*}
			c\cdot n^{-\min(1/2,\delta/2)} & \leq \inf_{\Xh_n\in\Ac_n^\mathrm{eq}(C([0,T]),X(0),W)}
				\biggl\{ \sup_{t\in [0,T]}\EE\bigl[|X(t)-\Xh_n(t)| \bigr] \biggr\} \\
		&	\leq C\cdot \bigl( 1+\ind_{\{1\}}(\delta)\cdot\ind_{\R\setminus\{0\}}(\beta)\cdot\sqrt{\ln(n)} \bigr)
				\cdot n^{-\min(1/2,\delta/2)}.
		\end{align*}
	\end{enumerate}
\end{cor}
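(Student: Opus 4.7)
The plan is to derive the lower bounds in~(\ref{cor15parti}) directly from the main theorems of Section~\ref{sec:global} applied to any open interval $I\subseteq(0,\infty)$, and then to combine these with upper bounds from the CIR approximation literature to obtain parts~(\ref{cor15partii})--(\ref{cor15partiv}). The key structural fact I would use throughout is that on $(0,\infty)$ the CIR coefficients $a(x)=\delta-\beta x$ and $b(x)=2\sqrt{x}$ are $C^\infty$, $b$ is strictly positive, and by~\eqref{eq:density} the marginal $X(t_0)$ lies in any prescribed nonempty open subinterval of $(0,\infty)$ with positive probability.

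For~(\ref{cor15parti}), I fix any $t_0\in(0,T)$, set $T_0=T$, and pick any nonempty open interval $I\subseteq(0,\infty)$. The observations in the preceding paragraph verify assumptions (B\ref{cond1}*)--(B\ref{cond3}*) of Theorem~\ref{thm:sup-local} for the $L_\infty$ claim, and simultaneously verify (C\ref{cond1}*)--(C\ref{cond3}*) of Theorem~\ref{thm:Lp-local} (taken with $p=1$) for the $L_1$ claim. Both lower bounds follow at once, with constants depending on $\delta,\beta$ only through the implicit constant $c$.

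For the matching upper bounds in~(\ref{cor15partii}) and~(\ref{cor15partiii}), I would quote convergence results for an equidistant discretization—for example a drift-implicit or symmetrized square-root Euler scheme in the regimes treated in \citet*{Alf13} and \citet*{HH16b}—and combine them with a piecewise-linear interpolation between the nodes. The restrictions ``$\delta>2$ with $\beta>0$ or $\delta=1$'' in~(\ref{cor15partii}) and ``$\delta>1$'' in~(\ref{cor15partiii}) reflect exactly the moment conditions under which those sup-norm and $L_1$-in-time rates of order $\sqrt{\ln(n+1)/n}$ and $n^{-1/2}$, respectively, are known for an equidistant scheme; I do not expect any new ingredients here beyond matching the cited upper bounds to the lower bounds from~(\ref{cor15parti}).

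Part~(\ref{cor15partiv}) is the most delicate. For the lower bound I would split on $\delta$: when $\delta\in(0,1]$ I bound $\sup_{t\in[0,T]}\EE[|X(t)-\Xh_n(t)|]$ from below by its value at $t=T$ and invoke~\eqref{eq:CIR-equi} to obtain $c\cdot n^{-\delta/2}$ on the equidistant class, while for $\delta>1$ Theorem~\ref{thm:uniform-pointwise} directly supplies the required $n^{-1/2}$ bound; the two cases combine to $n^{-\min(1/2,\delta/2)}$. The main obstacle I anticipate is the upper bound, where one needs pointwise $L_1$-rates of order $n^{-\min(1/2,\delta/2)}$ that are \emph{uniform} in $t\in[0,T]$, with a $\sqrt{\ln n}$ correction exactly at the borderline $\delta=1$, $\beta\neq 0$. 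Such pointwise rates are available in \citet*{HH16a,HH16b} at each fixed $t$, so the technical crux is to verify that the implicit constants stay bounded as $t\downarrow 0$ (where the Lebesgue density of $X(t)$ degenerates at the left boundary) and to isolate the logarithmic factor in the critical Ornstein--Uhlenbeck-type case; once this is done, the bound follows by taking the supremum over $t$.
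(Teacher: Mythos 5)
Your proposal follows essentially the same route as the paper: part~(\ref{cor15parti}) by verifying the local hypotheses of Theorems~\ref{thm:sup-local} and~\ref{thm:Lp-local} on an interval $I\subseteq(0,\infty)$ via~\eqref{eq:density}, the lower bounds in~(\ref{cor15partii})--(\ref{cor15partiv}) from~(\ref{cor15parti}), \eqref{eq:CIR-equi}, and the Fubini argument of Theorem~\ref{thm:uniform-pointwise}, and the upper bounds by citation (the paper uses \citet*[Theorem~1.1]{DNS12} for $\delta>2$, $\beta>0$ and \citet*[Remark~7]{HH16a}, \citet*[Theorem~2]{HH16b}, \citet*[Corollary~1]{HH16a} for the remaining cases, with linearly interpolated schemes, rather than the references you gesture at). The uniformity in $t$ that you flag as the crux of the upper bound in~(\ref{cor15partiv}) is not re-derived in the paper either; it is taken directly from the cited results, which already bound $\sup_{t}\EE[|X(t)-\Xh_n(t)|]$.
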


\begin{proof}
Similar to the reasoning in the previous section we conclude that the assumptions
of Theorem~\ref{thm:sup-local} and Theorem~\ref{thm:Lp-local} are fulfilled
without any restriction on the parameters $\delta$ and $\beta$. This shows part~\eqref{cor15parti}.
	
The lower bound in~\eqref{cor15partii} clearly follows from the first statement in~\eqref{cor15parti}.
The upper bound in~\eqref{cor15partii} is due to
	\citet*[Theorem~1.1]{DNS12} if $\delta>2$ and $\beta>0$ and due to
	\citet*[Remark~7]{HH16a}(with a linearly interpolated version of the corresponding numerical scheme) if $\delta=1$.
	
	The lower bound in~\eqref{cor15partiii} clearly follows from the second statement in~\eqref{cor15parti}.
	The upper bound in~\eqref{cor15partiii} is a consequence of the upper bound in~\eqref{cor15partiv}.
	
	The lower bound in~\eqref{cor15partiv} follows from the second statement in~\eqref{cor15parti} and \eqref{eq:CIR-equi}. The upper bound in~\eqref{cor15partiv} follows from \citet*[Theorem~2]{HH16b} (with a linearly interpolated version of the corresponding numerical scheme) if $\delta\neq 1$,
	from the upper bound in \eqref{cor15partii} if $\delta=1$ and $\beta\neq 0$,
	and from \citet*[Corollary~1]{HH16a} (with a linearly interpolated version of the corresponding numerical scheme) if $\delta=1$ and $\beta=0$.
\end{proof}

Concerning Corollary~\ref{cor:CIR-global}(\ref{cor15partii}) we add that no matching upper and lower bounds are known
for $\delta\in(0,2)\setminus\{1\}$, cf.~\citet*[Figure~1]{HH16b}.
Concerning Corollary~\ref{cor:CIR-global}(\ref{cor15partiii}) we add that no matching upper and lower bounds are known for $\delta\in(0,1)$.

We stress that Corollary~\ref{cor:CIR-global}(\ref{cor15partiv}) is the first result in the literature that provides matching upper and lower bounds (up to logarithmic terms)
for a particular error criterion without any restriction on the parameters.

\subsection{SDEs with Superlinearly Growing Coefficients}\label{sec:superlinear}

For simplicity, we assume throughout this section that
\begin{align*}
	a(t,x)=a(x) \qquad\text{and}\qquad b(t,x)=b(x) \qquad\text{are polynomials.}
\end{align*}
Furthermore, for a polynomial $h\colon\R\to\R$ we put
\begin{align*}
	\zeros(h)=\{x\in\R\colon\, h(x)=0\}.
\end{align*}
The following result is an immediate consequence of Theorem~\ref{thm:ft-local} as well as Theorem~\ref{thm:sup-local} and Theorem~\ref{thm:Lp-local}.

\begin{cor}[Superlinearly growing coefficients]\label{cor:superlinear}\ 
	\begin{enumerate}[(i)]\addtolength{\itemsep}{0.25\baselineskip}
	\item Assume that
		\begin{align*}
			\PP\!\left( X(0)\notin\bigl( \zeros(b)\cup\zeros(a'b-a\:\!b'-\tfrac{1}{2}b^2b'') \bigr)\right) > 0.
		\end{align*}
		Then there exists a constant $c\in(0,\infty)$ such that for all $n\in\N$ we have
		\begin{align*}
			\inf_{\Xh_n(T)\in\Ac_n(\R,X(0),W)} {\left\{ \EE\Bigl[ \bigl|X(T)-\Xh_n(T)\bigr| \Bigr] \right\}}
				\geq c \cdot n^{-1}.
		\end{align*}
	\item Assume that
		\begin{align*}
			\PP \bigl( X(0)\notin \zeros(b) \bigr) > 0.
		\end{align*}
		Then there exists a constant $c\in(0,\infty)$ such that for all $n\in\N$ we have
		\[
			\inf_{\Xh_n\in\Ac_n(C([0,T]),X(0),W)} {\left\{ \EE\bigl[ \|X-\Xh_n\|_\infty \bigr] \right\}}
				\geq c\cdot \sqrt{\ln(n+1)/n}
		\]
		and
	\[
			\inf_{\Xh_n\in\Ac_n(L_1([0,T]),X(0),W)} {\left\{ \EE\bigl[ \|X-\Xh_n\|_1 \bigr] \right\}}
				\geq c\cdot n^{-1/2}.
		\]
		\end{enumerate}
\end{cor}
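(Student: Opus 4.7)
The plan is to deduce both parts directly from the master lower error bounds of Sections~\ref{sec:ft} and~\ref{sec:global} by extracting, from the hypotheses on $X(0)$, an open interval $I\subseteq\R$ on which all smoothness and non-degeneracy conditions required by those theorems hold. The key observation is that because $a$ and $b$ are polynomials, the auxiliary function $a'b-ab'-\tfrac12 b^2b''$ is a polynomial as well, and consequently each of the sets $\zeros(b)$ and $\zeros(a'b-ab'-\tfrac12 b^2b'')$ is either finite or all of $\R$.

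For part (i), the assumption $\PP(X(0)\notin \zeros(b)\cup\zeros(a'b-ab'-\tfrac12 b^2b''))>0$ rules out the degenerate case where either zero set equals $\R$, so both sets are finite. Hence the complement $\R\setminus(\zeros(b)\cup\zeros(a'b-ab'-\tfrac12 b^2b''))$ is a finite disjoint union of open intervals $I_1,\dots,I_N$, and by the hypothesis at least one of them, say $I=I_{k^*}$, satisfies $\PP(X(0)\in I)>0$. On this $I$ the polynomials $a$ and $b$ are $C^\infty$, $b$ does not vanish, and $a'b-ab'-\tfrac12 b^2b''$ does not vanish, so the three conditions of Theorem~\ref{thm:ft-local} are met with $t_0=0$, and that theorem yields the lower bound $c\cdot n^{-1}$ on the pointwise mean error.

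Part (ii) is handled in the same way with the simpler set $\zeros(b)$: the hypothesis $\PP(X(0)\notin\zeros(b))>0$ forces $b$ to be a non-zero polynomial, so $\zeros(b)$ is finite, its complement is a finite disjoint union of open intervals, and one of them $I$ satisfies $\PP(X(0)\in I)>0$. On $I$ the coefficients $a,b$ are arbitrarily smooth and $b$ is non-vanishing, so the hypotheses of Theorem~\ref{thm:sup-local} and Theorem~\ref{thm:Lp-local} (with $t_0=0$, $T_0=T$, $p=1$) are satisfied and these two theorems immediately produce the claimed lower bounds of orders $\sqrt{\ln(n+1)/n}$ and $n^{-1/2}$. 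There is no real obstacle beyond this bookkeeping of interval selection; the polynomial hypothesis does exactly the work of reducing the abstract positive-probability assumption on $X(0)$ to the concrete local structure required by Theorems~\ref{thm:ft-local}, \ref{thm:sup-local}, and~\ref{thm:Lp-local}.
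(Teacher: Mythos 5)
Your argument is correct and is precisely the route the paper intends: the paper states the corollary as an immediate consequence of Theorems~\ref{thm:ft-local}, \ref{thm:sup-local}, and~\ref{thm:Lp-local}, and your bookkeeping (non-zero polynomials have finite zero sets, the complement is a finite disjoint union of open intervals, and one of them must carry positive probability for $X(0)$, on which all local smoothness and non-degeneracy hypotheses hold with $t_0=0$) is exactly the missing detail. Nothing further is needed.
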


There are matching upper error bounds  under certain monotone conditions on the coefficients. These bounds are  achieved by  tamed or projected or implicit versions of the Euler scheme in case of the mean $L_p$-error and the mean $L_\infty$-error, see~\citet*{HJK12, Hutzenthaler-Jentzen-Kloeden2013,sabanis:2016,BIK16}, and of the Milstein scheme in case of the error at a single time, see~\citet*{WG13,Kumar-Sabanis2016,BIK17}. In all of these cases, the corresponding methods are non-adaptive and based on a fixed equidistant grid, and therefore, adaptive algorithms are not superior to non-adaptive ones.
For an example see equation \eqref{sde2} and the subsequent discussion in the introduction.

\subsection{SDEs with Discontinuous Coefficients}\label{sec:discontinuous}

As an illustrating example we consider the SDE
\begin{align}\label{eq:discontinuous}
	\dd X(t) = \sgn(X(t))\cdot (1+X(t))\,\dd t + \dd W(t), \qquad X(0)=x_0,
\end{align}
with initial value $x_0\in\R$,
where $\sgn(x)=1$ for $x\in [0,\infty)$ and $\sgn(x)=-1$ for $x\in (-\infty,0)$.
Here, the coefficients $a,b$ are given by
\begin{align*}
	a(t,x)=a(x)=\sgn(x)\cdot (1+x) \qquad\text{and}\qquad b(t,x)=b(x)=1
\end{align*}
for every $(t,x)\in [0,T]\times\R$. Strong existence and pathwise uniqueness for
the SDE~\eqref{eq:discontinuous} follows from, e.g., \citet*[Theorem~4]{Zv74}.
Clearly, the coefficients $a,b$ of the SDE~\eqref{eq:discontinuous} are infinitely often differentiable
on $\R\setminus\{0\}$.
Moreover, for every $x\in\R\setminus\{0\}$ it holds that
\begin{align*}
	\bigl( a'b - a\:\!b' - \tfrac{1}{2} b^2b'' \bigr)(x)  = \sgn(x)
\end{align*}
and it is easy to see that
\begin{align*}
	\PP(\forall\,t\in[0,T/2]\colon\, X(t)=0) \neq 1.
\end{align*}
Hence the assumptions of Theorems~\ref{thm:ft-local},~\ref{thm:sup-local},~\ref{thm:Lp-local} are satisfied.
Combining this with an upper bound of \citet*[Theorem~3.1]{LS16}
(with a linearly interpolated version of the corresponding numerical scheme)
shows that there exist constants $c,C\in(0,\infty)$ such that for all $n\in\N$ we have
\begin{align*}
	 \inf_{\Xh_n\in\Ac_n(\R,X(0),W)}
		\EE\bigl[|X(T)-\Xh_n(T)| \bigr] & \geq c\cdot n^{-1},\\
	\inf_{\Xh_n\in\Ac_n(C([0,T]),X(0),W)}  \EE\bigl[ \|X-\Xh_n\|_\infty \bigr] 
				& \geq c\cdot \sqrt{\ln(n+1)/n}	
\end{align*}
as well as
\begin{align*}
	c \cdot n^{-1/2} &\leq \inf_{\Xh_n\in\Ac_n(L_1([0,T]),X(0),W)}
		\EE\bigl[\|X-\Xh_n\|_1 \bigr] \\
	&\leq \inf_{\Xh_n\in\Ac_n^\mathrm{eq}(L_1([0,T]),X(0),W)}
		\EE\bigl[\|X-\Xh_n\|_{1} \bigr] \leq C\cdot n^{-1/2}.
\end{align*}
In particular, adaptive algorithms are not superior (up to multiplicative constants)
to algorithms that are based on fixed equidistant grids for global approximation with respect to the $L_1$-norm.

A further example of an SDE with discontinuous coefficients that is often considered
in the literature is given by
\begin{align*}
	\dd X(t) = \sgn(X(t))\,\dd t + \dd W(t), \qquad X(0)=x_0,
\end{align*}
with initial value $x_0\in\R$, see, e.g., \citet*[Table~1]{GLN17}.
Observe that in this case the assumptions of Theorem~\ref{thm:ft-local} for pointwise approximation
are not fulfilled since $a'b - ab' - \tfrac{1}{2} b^2b''=0$ on $\R\setminus\{0\}$. As a consequence, it could be possible that adaptive algorithms are able to achieve
a convergence rate greater than $1$ for this SDE.

\appendix
\section{Properties of the Normal Distribution}

We collect a number of properties of the Gaussian distribution on the real line,
which are employed in the proofs of Proposition~\ref{prop:ft-global},
Proposition~\ref{prop:sup-global}, and Proposition~\ref{prop:Lp-global}.
We suppose that these properties are well-known but for the convenience of the reader
we provide proofs of these facts.

\begin{lem}\label{lem:normal1}
	Let $Z$ be a real-valued random variable that is normally distributed.
	Then for all $\sigma\in [0,\infty)$ with $\sigma^2\leq \Var[Z]$ and $\varepsilon\in[0,\infty)$ we have
	\begin{align*}
		\PP(|Z| \geq \varepsilon\,\sigma) \geq 1- \varepsilon.
	\end{align*}
\end{lem}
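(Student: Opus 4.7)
The plan is to reduce the claim to a direct density bound on a normal distribution. First observe the trivial cases: if $\varepsilon\geq 1$, then $1-\varepsilon\leq 0$ and the inequality holds automatically; if $\sigma=0$, then $\PP(|Z|\geq 0)=1\geq 1-\varepsilon$. Similarly, if $\Var[Z]=0$ then the hypothesis forces $\sigma=0$ and we are done. Hence we may assume $\varepsilon\in [0,1)$, $\sigma>0$, and $\tau:=\sqrt{\Var[Z]}>0$ with $\sigma\leq \tau$. Writing $\mu=\EE[Z]$, the random variable $Z$ has Lebesgue density $x\mapsto (2\pi\tau^2)^{-1/2}\exp(-(x-\mu)^2/(2\tau^2))$, which is bounded by $(2\pi\tau^2)^{-1/2}$.

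The main step is the elementary estimate
\begin{align*}
\PP(|Z|<\varepsilon\sigma)
=\int_{-\varepsilon\sigma}^{\varepsilon\sigma}\frac{1}{\sqrt{2\pi}\,\tau}
\exp\!\Bigl(-\frac{(x-\mu)^2}{2\tau^2}\Bigr)\,\dd x
\leq \frac{2\varepsilon\sigma}{\sqrt{2\pi}\,\tau}
\leq \sqrt{\tfrac{2}{\pi}}\cdot\varepsilon,
\end{align*}
where the last inequality uses $\sigma/\tau\leq 1$. Since $\sqrt{2/\pi}<1$, taking complements yields
\begin{align*}
\PP(|Z|\geq \varepsilon\sigma)\geq 1-\sqrt{\tfrac{2}{\pi}}\cdot\varepsilon \geq 1-\varepsilon,
\end{align*}
which is the desired bound.

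No real obstacle arises here: the proof is a one-line density bound combined with case separation to handle the degenerate values of $\sigma$, $\tau$, and $\varepsilon$. The only minor subtlety is to remember that the supremum of the Gaussian density scales as $1/\tau$, so that the inequality $\sigma\leq\tau$ is exactly what converts the Lebesgue length $2\varepsilon\sigma$ of the interval into a dimensionless bound of order $\varepsilon$.
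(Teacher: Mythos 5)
Your proof is correct, and it takes a slightly but genuinely different route from the paper. The paper first reduces to the case $\sigma^2=\Var[Z]$ and then invokes the Anderson inequality to pass from $\PP(|Z|<\varepsilon\sigma)$ to $\PP(|Z-\mu|<\varepsilon\sigma)$, i.e.\ it compares the shifted Gaussian with the centered one on the symmetric interval and then bounds the centered Gaussian integral by $\tfrac{2}{\sqrt{2\pi}}\varepsilon$. You instead bound the (possibly non-centered) density of $Z$ uniformly by its maximum $\bigl(2\pi\Var[Z]\bigr)^{-1/2}$ and integrate over the interval of length $2\varepsilon\sigma$, carrying the ratio $\sigma/\tau\leq 1$ through explicitly rather than reducing to $\sigma=\tau$. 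Both routes land on the same constant $\sqrt{2/\pi}<1$; yours is more elementary in that it avoids Anderson's inequality entirely (the sup-density bound already absorbs the effect of the mean), and your explicit treatment of the degenerate cases $\varepsilon\geq 1$, $\sigma=0$, $\Var[Z]=0$ is careful and complete, where the paper disposes of them with a one-line "we may assume".
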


\begin{proof}
	We may assume $\Var[Z]>0$ and $\sigma^2=\Var[Z]$.
	Let $\varepsilon\in[0,\infty)$ and $\mu=\EE\left[Z\right]$.
	By the Anderson inequality we have $\PP(|Z|<\varepsilon\,\sigma)\leq \PP(|Z-\mu|<\varepsilon\,\sigma)$,
	see \citet*{And55} or \citet*[Corollary~7.1, p.~47]{Lif12}. Hence
	\begin{align*}
		\PP(|Z| \geq \varepsilon\,\sigma)
			\geq 1 - 2\int_0^\varepsilon \frac{1}{\sqrt{2\pi}} \exp(-x^2/2)\,\dd x
			\geq 1- \frac{2}{\sqrt{2\pi}}\,\varepsilon \geq 1-\varepsilon.
	\end{align*}
\end{proof}

\begin{lem}\label{lem:normal2}
	Let $\varepsilon,\delta\in (0,\infty)$.
	Then there exists a constant $c\in (0,\infty)$ with the following property.
	If $N\in\N$ and $Z_1,\dots,Z_N$ are independent real-valued random variables
	each being normally distributed with variance at least $\delta^2$, then
	\begin{enumerate}[(i)]\addtolength{\itemsep}{0.25\baselineskip}
	\item\label{lem18parti} $\PP\Bigl(\sum_{i=1}^N |Z_i| \geq c\,N \Bigr) \geq 1-\varepsilon$,
	\item\label{lem18partii} $\PP\Bigl(\max_{i=1,\dots,N} |Z_i| \geq c\,\sqrt{\ln(N)} \Bigr) \geq 1-\varepsilon$.
	\end{enumerate}
\end{lem}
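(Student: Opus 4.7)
The plan is to reduce both parts to Gaussian tail estimates for a common centered variable via Anderson's inequality (as already used for Lemma~\ref{lem:normal1}), then exploit the independence of the $Z_i$. Writing $\mu_i=\EE[Z_i]$ and $\sigma_i=\sqrt{\Var[Z_i]}\geq\delta$ and letting $\eta$ denote a standard normal variable, Anderson's inequality yields
\[
\PP(|Z_i|\geq t)\geq\PP(|Z_i-\mu_i|\geq t)=\PP(|\eta|\geq t/\sigma_i)\geq\PP(|\eta|\geq t/\delta)=:p(t)
\]
for every $t\geq 0$; the crucial feature of $p(t)$ is that it is independent of $i$ and of the unknown means $\mu_i$. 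Thus each $|Z_i|$ stochastically dominates $\delta|\eta|$, and the independence of the $Z_i$ lets us treat the dominants as independent as well.

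For part~(\ref{lem18partii}) the independence gives
\[
\PP\Bigl(\max_{i=1,\dots,N}|Z_i|<t\Bigr)=\prod_{i=1}^N\PP(|Z_i|<t)\leq(1-p(t))^N\leq\exp(-N\,p(t)).
\]
Plugging in $t=c\sqrt{\ln N}$ and invoking the lower Mills bound $\PP(|\eta|\geq x)\geq (x\sqrt{2\pi})^{-1}\exp(-x^2/2)$ for $x\geq 1$, one obtains $N\,p(c\sqrt{\ln N})\geq K\cdot N^{1-c^2/(2\delta^2)}/\sqrt{\ln N}$ with an explicit constant $K=K(c,\delta)>0$. Whenever $c<\delta\sqrt{2}$ this quantity tends to infinity, so there exists $N_0=N_0(\varepsilon,\delta,c)\in\N$ such that $\exp(-N\,p(c\sqrt{\ln N}))\leq\varepsilon$ for all $N\geq N_0$. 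The finitely many remaining $N<N_0$ are handled by shrinking $c$ further: since $p(s)\to 1$ as $s\downarrow 0$ one can arrange $\PP(|Z_1|\geq c\sqrt{\ln N_0})\geq 1-\varepsilon$, and then $\PP(\max_i|Z_i|\geq c\sqrt{\ln N})\geq\PP(|Z_1|\geq c\sqrt{\ln N_0})\geq 1-\varepsilon$ for $2\leq N<N_0$ (the case $N=1$ is trivial since $\sqrt{\ln 1}=0$).

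For part~(\ref{lem18parti}) a Bernoulli argument works. Fix $c_0>0$ so small that $p(c_0)\geq 3/4$, set $B_i:=\ind\{|Z_i|\geq c_0\}$, and apply Hoeffding's inequality to the independent Bernoulli variables $B_i$ with success probability at least $3/4$ to obtain $\PP\bigl(\sum_{i=1}^N B_i\geq N/2\bigr)\geq 1-\exp(-N/8)$. On that event $\sum_{i=1}^N|Z_i|\geq (c_0/2)N$, which gives~(\ref{lem18parti}) with $c=c_0/2$ for every $N\geq N_1:=8\ln(1/\varepsilon)$. The finitely many $N<N_1$ are handled as before: pick $c_1$ so small that $\PP(|Z_1|\geq c_1)\geq 1-\varepsilon$, take $c:=\min(c_0/2,c_1/N_1)$, and note that $\PP(\sum_{i=1}^N|Z_i|\geq cN)\geq\PP(|Z_1|\geq c_1)\geq 1-\varepsilon$ in that regime.

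The only mild obstacle is that the $Z_i$ need not be identically distributed and their variances are not bounded above, so classical central-limit or concentration results do not apply directly. Anderson's inequality neatly removes this obstacle by producing a mean- and index-free stochastic minorant $\delta|\eta|$ for each $|Z_i|$; after that, both claims reduce to routine Gaussian tail and Bernoulli-counting estimates.
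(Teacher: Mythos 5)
Your proof is correct, and its overall architecture matches the paper's: reduce to a mean-free, scale-normalized Gaussian tail via Anderson's inequality, prove the bounds for all sufficiently large $N$ by a product/concentration estimate, and absorb the finitely many small $N$ by shrinking $c$. For part~(\ref{lem18partii}) your argument is essentially identical to the paper's (the paper uses the same $\prod_i\PP(|Y_i|\le t)\le\exp(-2\varphi(t)N)$ bound with a Mills-type lower bound $\varphi$ and observes that $N\varphi(c\sqrt{\ln N})\to\infty$ for $c<\sqrt 2$). The genuine difference is in part~(\ref{lem18parti}): the paper applies the strong law of large numbers to $N^{-1}\sum_{i=1}^N|Y_i|\to\sqrt{2/\pi}$ for i.i.d.\ standard normals, which is soft and yields a non-constructive threshold $N_1$, whereas you truncate to Bernoulli indicators $\ind\{|Z_i|\ge c_0\}$ and apply Hoeffding's inequality, which gives the explicit quantitative bound $1-\exp(-N/8)$ and the explicit threshold $N_1=8\ln(1/\varepsilon)$. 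A second, cosmetic difference is the order of operations: the paper proves everything for standard normals first and transfers to the $Z_i$ at the very end via the vector form of Anderson's inequality applied to the $\ell_1$- and $\ell_\infty$-norms, while you invoke Anderson coordinatewise at the outset to obtain the index-free minorant $p(t)=\PP(|\eta|\ge t/\delta)$; both transfers are valid, and yours has the small advantage of making all subsequent constants manifestly distribution-free from the start.
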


\begin{proof}
	Let $\varepsilon,\delta\in (0,\infty)$ and let $(Y_i)_{i\in\N}$ be a sequence of
	independent, real-valued, standard normal random variables.
	
	Put $M_N = N^{-1}\cdot\sum_{i=1}^N |Y_i|$ for $N\in\N$. By the strong law of large numbers
	we have $\lim_{N\to \infty} M_N = \EE[|Y_1|] = \sqrt{2/\pi}$ with probability one.
	Since $\sqrt{2/\pi} > 1/2$ we conclude that $\lim_{N\to\infty}\PP(M_N \le 1/2) = 0$. Hence 
	\begin{align}\label{a111}
		\exists\,N_1\in\N \enspace \forall\,N\geq N_1 \colon\, \PP\Bigl( \sum_{i=1}^N |Y_i| \leq N/2 \Bigr) \leq \varepsilon.
	\end{align}
	Furthermore, since $\PP(\min_{N=1,\dots, N_1-1} M_N >0) = 1$, there exists $c_1\in (0,\infty)$ such that
	\begin{align}\label{a112}
		\forall\,N < N_1 \colon\, \PP\Bigl(\sum_{i=1}^N |Y_i| \leq c_1 N \Bigr) \leq \varepsilon.
	\end{align}
	
	Next, define $\varphi\colon (1,\infty) \to (0,\infty)$ by
	\[
		\varphi(t) = \frac{1}{\sqrt{2\pi}}\cdot {\left(\frac{1}{t}-\frac{1}{t^3}\right)} \cdot\exp(-t^2/2).
	\]
	Using a well-known bound for the tails of the standard normal distribution,
	see, e.g., \citet*[Lemma~1.1.3]{Bog98}, we have
	\begin{align*}
		\forall\,N\in\N,\ t\in(1,\infty)\colon\, \PP\Bigl(\max_{i=1,\dots,N}|Y_i|\leq t\Bigr)
			= \PP\bigl(|Y_1|\leq t\bigr)^N \leq (1-2\varphi(t))^N \leq \exp\bigl(-2\varphi(t)N\bigr).
	\end{align*}
	Note that $\lim_{N\to\infty}\varphi\bigl(c\,\sqrt{\ln(N)}\bigr)\cdot N=\infty$ for every $c\in (0,\sqrt{2})$. Hence 
	\begin{align}\label{a115}
		\exists\,N_2\geq 2 \enspace \forall\,N\geq N_2 \colon\,
			\PP\Bigl(\max_{i=1,\dots,N} |Y_i| \leq \sqrt{\ln(N)}\Bigr) \leq \varepsilon.
	\end{align}
	Put $c_2 = \varepsilon /\sqrt{\ln (N_2)}\in(0,\infty)$. Employing Lemma~\ref{lem:normal1} we obtain 
	\begin{align}\label{a213}
		\forall\, N < N_2\colon\, \PP\Bigl( \max_{i=1,\dots,N}|Y_i| \leq c_2\sqrt{\ln(N)} \Bigr)
			\leq \PP\bigl( |Y_1| \leq c_2\sqrt{\ln(N_2)} \bigr) \leq \varepsilon.
	\end{align}
	
	Finally, let $(\mu_i)_{i\in\N}\subset\R$, $(\sigma_i)_{i\in\N}\subset [\delta,\infty)$
	and let $(Z_i)_{i\in\N}$ be a sequence of independent, real-valued random variables
	with $Z_i\sim\mathrm{N}(\mu_i,\sigma_i^2)$ for every $n\in\N$. By the Anderson inequality,
	see \citet*{And55} or \citet*[Corollary~7.1, p.~47]{Lif12}, and the assumption on the variances $(\sigma_i)_{i\in\N}$
	we have for every $p\in[1,\infty]$ that
	\begin{align}\label{a123}
		\begin{aligned}
			\forall\,N\in\N,\ z\in[0,\infty)\colon\,
				\PP\bigl( |(Z_i)_{i=1,\dots,N}|_p \leq z \bigr) &\leq \PP\bigl( |(Z_i-\mu_i)_{i=1,\dots,N}|_p \leq z \bigr) \\
				&\leq \PP\Bigl( \bigl|\bigl((Z_i-\mu_i)/\sigma_i\bigr)_{i=1,\dots,N}\bigr|_p \leq z/\delta \Bigr).
		\end{aligned}
	\end{align}
	Let $c=\min(1/2,c_1,c_2)\cdot \delta\in(0,\infty)$.
	Combining~\eqref{a111},~\eqref{a112}, and~\eqref{a123} for $p=1$ we obtain
	\[
	\forall\,N\in\N \colon\, \PP\Bigl( \sum_{i=1}^N |Z_i| \leq c\,N \Bigr)
		\leq \PP\Bigl( \sum_{i=1}^N |Y_i| \leq \min(1/2,c_1)\cdot N \Bigr) \leq \varepsilon,
	\]
	which yields the statement in~\eqref{lem18parti}.
	Combining~\eqref{a115},~\eqref{a213}, and~\eqref{a123} for $p=\infty$ we obtain
	\[
	\forall\,N\in\N\colon\, \PP\Bigl( \max_{i=1,\dots,N} |Z_i| \leq c\,\sqrt{\ln(N)} \Bigr)
		\leq \PP\Bigl( \max_{i=1,\dots,N} |Y_i| \leq \min(1,c_2)\,\sqrt{\ln(N)} \Bigr) \leq \varepsilon,
	\] 
	which yields the statement in \eqref{lem18partii} and completes the proof.
\end{proof}

\begin{lem}\label{lem:normal3}
	Let $\varepsilon,\delta\in (0,\infty)$. 
	Then there exists a constant $c\in (0,\infty)$ with the following property.
	If $k\in2\N$, $B_1,\dots,B_{k/2}$ are independent Brownian bridges (from $0$ to $0$) on $[0,1/k]$,
	$f_1,\dots,f_{k/2}\in L_1([0,1/k])$, and $a_1,\dots,a_{k/2}\in\R$ with $|a_1|,\dots,|a_{k/2}|\geq\delta$, then
	\begin{align*}
		\PP\Bigl( \sum_{i=1}^{k/2} \|a_i\cdot B_i-f_i\|_1 \geq c/\sqrt{k/2} \Bigr) \geq 1-\varepsilon.
	\end{align*}
\end{lem}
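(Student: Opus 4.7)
The plan is to reduce the inequality to a law-of-large-numbers estimate for iid copies of $\|\widetilde B\|_1$, where $\widetilde B$ denotes a standard Brownian bridge from $0$ to $0$ on $[0,1]$.

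First, I would remove the deterministic shifts $f_i$. The random element $(a_1 B_1,\dots,a_{k/2} B_{k/2})$ has a symmetric centered Gaussian law on the Banach space $(L_1([0,1/k]))^{k/2}$ endowed with the norm $x\mapsto\sum_{i=1}^{k/2}\|x_i\|_1$, and the ball of radius $r$ in this norm is convex and symmetric. Anderson's inequality (see \citet*{And55} or \citet*[Corollary~7.1, p.~47]{Lif12}) therefore yields
\begin{align*}
\PP\Bigl(\sum_{i=1}^{k/2}\|a_iB_i-f_i\|_1\leq r\Bigr)\leq \PP\Bigl(\sum_{i=1}^{k/2}\|a_iB_i\|_1\leq r\Bigr)
\end{align*}
for every $r\in[0,\infty)$. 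Combined with $|a_i|\geq\delta$, this reduces the problem to a lower bound of the form $\delta\sum_{i=1}^{k/2}\|B_i\|_1\geq c/\sqrt{k/2}$.

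Second, I would rescale. The Brownian bridge scaling $B_i(t)\stackrel{d}{=}k^{-1/2}\widetilde B_i(kt)$, with $\widetilde B_1,\dots,\widetilde B_{k/2}$ iid standard Brownian bridges on $[0,1]$, gives $\|B_i\|_1\stackrel{d}{=}k^{-3/2}\|\widetilde B_i\|_1$ jointly. Setting $N=k/2$, the bound just identified is equivalent to
\begin{align*}
\frac{1}{N}\sum_{i=1}^N\|\widetilde B_i\|_1\geq\frac{2\sqrt 2\,c}{\delta}.
\end{align*}

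Third, I would finish with the strong law of large numbers. With $\mu:=\EE[\|\widetilde B_1\|_1]\in(0,\infty)$, the SLLN provides $N_0\in\N$ such that $\PP(N^{-1}\sum_{i=1}^N\|\widetilde B_i\|_1\leq\mu/2)\leq\varepsilon$ for every $N\geq N_0$. For the remaining finitely many $N\in\{1,\dots,N_0-1\}$, positivity $\PP(\|\widetilde B_1\|_1>0)=1$ together with continuity of probability from above furnishes a common constant $\theta>0$ with $\PP(N^{-1}\sum_{i=1}^N\|\widetilde B_i\|_1\leq\theta)\leq\varepsilon$ for every such $N$. Choosing $c>0$ small enough that $2\sqrt 2\,c/\delta\leq\min(\mu/2,\theta)$ then completes the argument. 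The main obstacle is Step~1: Anderson's inequality must be invoked on the product Banach space $(L_1([0,1/k]))^{k/2}$ equipped with its $\ell^1$-sum norm, so that the relevant sublevel set is genuinely convex and symmetric and the inequality can be applied to the whole vector at once; once this reduction is in place, the scaling and SLLN steps are routine.
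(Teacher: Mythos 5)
Your argument is correct, and it takes a genuinely different route from the paper. The paper reduces to one-dimensional Gaussians: it bounds $\|a_iB_i-f_i\|_1$ from below by $\bigl|\int_0^{1/k}(a_iB_i(t)-f_i(t))\,\dd t\bigr|$, observes that the suitably normalized integrals $Z_i$ are independent normal variables with variance $a_i^2\ge\delta^2$ (and arbitrary means, which absorb the $f_i$), and then invokes Lemma~\ref{lem:normal2}(\ref{lem18parti}) — whose proof already packages the finite-dimensional Anderson inequality together with the strong law of large numbers for $|Y_i|$. You instead apply Anderson's inequality once, in the infinite-dimensional product space $(L_1([0,1/k]))^{k/2}$ with its $\ell^1$-sum norm, to discard the shifts $(f_i)$ at the level of the $L_1$-norms themselves, and then conclude by Brownian-bridge scaling and the SLLN for $\|\widetilde B_i\|_1$. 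Both routes are sound. The paper's version needs only the scalar Anderson inequality and reuses an already-proved lemma, at the cost of the (harmless here) loss in passing from $\|g\|_1$ to $|\int g|$; your version requires the Banach-space form of Anderson's inequality for the centered Gaussian vector $(a_1B_1,\dots,a_{k/2}B_{k/2})$ and an arbitrary shift $h=(f_i)_i\in(L_1)^{k/2}$, which is indeed available (the set $\{x\colon\sum_i\|x_i\|_1\le r\}$ is a closed, convex, symmetric norm ball), but is a heavier tool than what the paper deploys. Your scaling identity $\|B_i\|_1\stackrel{d}{=}k^{-3/2}\|\widetilde B_i\|_1$ and the bookkeeping leading to the threshold $2\sqrt2\,c/\delta$ check out, as does the treatment of small $N$ via $\PP(\|\widetilde B_1\|_1>0)=1$.
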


\begin{proof}
Let $\varepsilon,\delta\in (0,\infty)$ and choose $c\in (0,\infty)$ according to Lemma~\ref{lem:normal2}.
	For $i=1,\dots,k/2$ define
	\[
	Z_i = \sqrt{12}\, k^{3/2}\cdot \int_0^{1/k} \bigl( a_i\cdot B_i(t)-f_i(t) \bigr)\,\dd t.
	\]
	Then $Z_1,\dots,Z_{k/2}$ are independent normal random variables with 
	\begin{align*}
		\Var(Z_i) = 12 k^3 a_i^2 \cdot \EE\biggl[\Bigl( \int_0^{1/k} B_i(t)\,\dd t \Bigr)^2\biggr] = a_i^2 \geq \delta^2
	\end{align*}
	for $i=1,\dots,k/2$, see, e.g., \citet*[Equation~(16)]{MG04}.
	Thus, by Lemma~\ref{lem:normal2}(\ref{lem18parti}),
	\[
	\PP\Bigl( \sum_{i=1}^{k/2} \|a_i\cdot B_i-f_i\|_1 \geq c/\sqrt{48k} \Bigr)
		\geq \PP\Bigl( \sum_{i=1}^{k/2} |Z_i| \geq  c\cdot k/2 \Bigr) \geq 1-\varepsilon,
	\]
	which completes the proof.
\end{proof}

\section{Localization Technique: Comparison Results}\label{sec:comparison-result}

We relate the solution $X$ of the integral equation \eqref{eq:SDE} in Section~\ref{sec:setting} to the
solution $\Xt$ of an integral equation with the same initial value $\Xt(0)=X(0)$ and
coefficients $\at$ and $\bt$ that coincide with $a$ and $b$
on a stripe $[0,T]\times I\subseteq [0,T]\times \R$, respectively.

\begin{lem}\label{lem:localization}
	Assume the setting in Section~\ref{sec:setting}. Let $\at\colon [0,T]\times \R\to\R$
	and $\bt\colon [0,T]\times\R\to\R$ be Borel-measurable functions, let $\emptyset\neq I\subseteq\R$ be an
	open interval and let $C\in [0,\infty)$ be a constant such that
	for all $t\in [0,T]$ and $x\in\R$ it holds that
	\begin{align*}
		|\at(t,x)| + |\bt(t,x)| \leq C\, (1+|x|),
	\end{align*}
	for all $t\in [0,T]$ and $x,y\in I$ it holds that
	\begin{align}\label{eq:p1}
		|\at(t,x)-\at(t,y)| + |\bt(t,x)-\bt(t,y)| \leq C\, |x-y|,
	\end{align}
	and for all $t\in [0,T]$ and $x\in I$ it holds that
	\begin{align}\label{eq:p2}
		a(t,x)=\at(t,x),\qquad b(t,x)=\bt(t,x).
	\end{align}
	Assume further that $\Xt\colon [0,T]\times\Omega\to\R$ is an $(\Fc_t)_{t\in [0,T]}$-adapted
	stochastic process with continuous paths such that
	for all $t\in[0,T]$ it holds $\PP$-a.s.~that
	\begin{align}\label{eq:Xt}
		\Xt(t) = X(0) + \int_0^t \at\bigl(s,\Xt(s)\bigr)\,\dd s + \int_0^t \bt\bigl(s,\Xt(s)\bigr)\,\dd W(s).
	\end{align}
	Define the stopping time
	\begin{align}\label{eq:stop}
		\tau = \inf\{t\in[0,T]\colon X(t) \not\in I\} \wedge\inf\{t\in[0,T]\colon \Xt(t)\not\in I\} \wedge T.
	\end{align}
	Then $\PP$-a.s.~for every $t\in[0,T]$
	\begin{align*}
		X({t\wedge\tau}) = \Xt({t\wedge\tau}).
	\end{align*}
	Moreover,
	\begin{align*}
		\PP\bigl( \{ \forall\,t\in[0,T]\colon \Xt(t)=X(t)\} \cap \{ \forall\,t\in[0,T]\colon \Xt(t)\in I\} \bigr)
			= \PP\bigl( \forall\,t\in[0,T]\colon \Xt(t)\in I \bigr).
	\end{align*}
\end{lem}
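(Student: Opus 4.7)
The plan is first to prove the pathwise identity $X(t \wedge \tau) = \Xt(t \wedge \tau)$ via Gronwall's lemma applied to the stopped processes, and then to derive the probability identity by a short deterministic argument exploiting continuity of paths.

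For the pathwise identity: by continuity of paths and openness of $I$, for every $s \in [0,\tau)$ both $X(s)$ and $\Xt(s)$ lie in $I$, so \eqref{eq:p2} gives $a(s,X(s)) = \at(s,X(s))$ and $b(s,X(s)) = \bt(s,X(s))$ for Lebesgue-a.e.~$s \in [0,\tau)$, and \eqref{eq:p1} yields the Lipschitz bound
$$|a(s,X(s)) - \at(s,\Xt(s))| + |b(s,X(s)) - \bt(s,\Xt(s))| \le 2C\,|X(s) - \Xt(s)|, \qquad s \in [0,\tau).$$
Since no moment condition is imposed on $X(0)$, I would localize further by introducing
$$\sigma_k = \inf\{t \in [0,T] \colon |X(t)| + |\Xt(t)| \ge k\} \wedge T, \qquad k \in \N,$$
so that $\sigma_k \nearrow T$ $\PP$-a.s.~by continuity. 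Subtracting \eqref{eq:SDE} and \eqref{eq:Xt}, stopping at $\tau \wedge \sigma_k$, and applying the Burkholder-Davis-Gundy and Cauchy-Schwarz inequalities (the integrands being bounded by stopping) yields, for some $C_k \in (0,\infty)$,
$$\EE\!\left[\sup_{u \le t}|X(u\wedge\tau\wedge\sigma_k) - \Xt(u\wedge\tau\wedge\sigma_k)|^2\right] \le C_k \int_0^t \EE\!\left[\sup_{u \le s}|X(u\wedge\tau\wedge\sigma_k) - \Xt(u\wedge\tau\wedge\sigma_k)|^2\right] \dd s$$
for all $t \in [0,T]$. Gronwall's lemma then forces the left-hand side to vanish, and letting $k \to \infty$ together with path continuity upgrades the identity to $\PP(\forall t \in [0,T]\colon X(t\wedge\tau) = \Xt(t\wedge\tau)) = 1$.

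For the probability identity: set $E = \{\forall t \in [0,T]\colon \Xt(t) \in I\}$ and let $A = \{\forall t \in [0,T]\colon X(t \wedge \tau) = \Xt(t \wedge \tau)\}$, so $\PP(A) = 1$ by the previous step. On $E \cap A$, if $\tau < T$ then continuity of $X$ and closedness of $\R \setminus I$ would force $X(\tau) \notin I$; however $A$ forces $X(\tau) = \Xt(\tau)$ and $E$ forces $\Xt(\tau) \in I$, a contradiction. Hence $\tau = T$ on $E \cap A$, so $X(t) = X(t \wedge \tau) = \Xt(t \wedge \tau) = \Xt(t)$ for every $t \in [0,T]$, giving $E \cap A \subseteq \{\forall t \in [0,T]\colon X(t) = \Xt(t)\} \cap E$. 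Combining with $\PP(A) = 1$ yields the desired equality of probabilities. The main technical step is the Gronwall bound: the linear growth of $\at, \bt$ is too weak to provide a priori moment bounds on the unstopped processes, so the localization via $\sigma_k$ is essential to legitimize the BDG-based estimate.
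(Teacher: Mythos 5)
Your proposal is correct and follows essentially the same route as the paper's proof: localize by stopping at the level sets of $|X|+|\Xt|$, use \eqref{eq:p2} to replace $a,b$ by $\at,\bt$ along the paths up to $\tau$, derive a Gronwall inequality from \eqref{eq:p1}, and then obtain the second assertion by the same deterministic continuity/openness argument showing $\tau=T$ on the event $\{\forall t\colon \Xt(t)\in I\}$. The only (immaterial) difference is that you control $\EE[\sup_{u\le t}|\cdot|^2]$ via Burkholder--Davis--Gundy, whereas the paper bounds $\EE[|X(t\wedge\rho)-\Xt(t\wedge\rho)|^2]$ pointwise in $t$ using the It\^o isometry and H\"older's inequality and then invokes path continuity.
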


\begin{proof}
	For every $n\in\N$ we define a stopping time by
	\[
	\sigma_n = \inf\{t\in [0,T]\colon \max(|X(t)|,|\Xt(t)|)\geq n\}\wedge T.
	\] 
	Then  
	\begin{align}\label{eq:e1}
		X(t\wedge \tau) = \lim_{n\to\infty} X(t\wedge \tau\wedge \sigma_n),\qquad
			\Xt(t\wedge \tau) = \lim_{n\to\infty} \Xt(t\wedge \tau\wedge \sigma_n)
	\end{align}
	for every $t\in [0,T]$. In view of \eqref{eq:e1} and the pathwise continuity of $X$ and $\Xt$
	it remains to prove that for every $t\in[0,T]$ and every $n\in\N$ it holds $\PP$-a.s.~
	\[
	X(t\wedge \tau\wedge \sigma_n) = \Xt(t\wedge \tau\wedge \sigma_n).
	\]  

	To this end we fix $n\in\N$ and put $\rho = \tau\wedge \sigma_n$. Moreover, we put 
	\[
	A(t)= a(t,X(t)),\qquad \At(t) = \at\bigl(t,\Xt(t)\bigr),\qquad \Ab(t) = \at(t,X(t))
	\]
	as well as 
	\[
	B(t)= b(t,X(t)),\qquad \Bt(t) = \bt\bigl(t,\Xt(t)\bigr),\qquad \Bb(t) = \bt(t,X(t))
	\]
	for every $t\in[0,T]$.
	By \eqref{eq:p2} we have 
	\[
	\Ab = A\text{ and } \Bb = B\text{ on }\{(t,\omega)\in [0,T]\times\Omega\colon 0\leq t< \rho(\omega)\}.
	\]
	It follows that for all $t\in[0,T]$ we have $\PP$-a.s.~ 
	\begin{align*}
		X(t\wedge \rho) &= X(0) + \int_0^{t\wedge \rho} A(s)\,\dd s + \int_0^{t\wedge \rho} B(s)\,\dd W(s) \\
		&= X(0) + \int_0^{t\wedge \rho} \Ab(s)\,\dd s + \int_0^{t\wedge \rho} \Bb(s)\,\dd W(s),
	\end{align*}
	and therefore $\PP$-a.s.~
	\begin{align}\label{eq:e2}
		X(t\wedge \rho) - \Xt(t\wedge \rho)
			= \int_0^{t\wedge \rho} \bigl(\Ab(s)- \At(s)\bigr)\,\dd s + \int_0^{t\wedge \rho} \bigl(\Bb(s)-\Bt(s)\bigr)\,\dd W(s).
	\end{align}

	By the H\"older inequality and \eqref{eq:p1} we get
	\begin{align}\label{eq:e3}
		\begin{aligned}
			\EE\biggl[\Bigl(\int_0^{t\wedge \rho} \bigl(\Ab(s)- \At(s)\bigr)\,\dd s\Bigr)^2\biggr]
				&\leq  T\,\EE\biggl[\int_0^{t\wedge \rho} \bigl|\Ab(s)- \At(s)\bigr|^2\,\dd s\biggr] \\
			&\leq  TC^2\int_0^t \EE\Bigl[ \bigl|X(s\wedge \rho)-\Xt(s\wedge \rho)\bigl|^2\Bigl]\,\dd s.
		\end{aligned}
	\end{align}
	By the It\^{o} isometry and \eqref{eq:p1} we get
	\begin{align}\label{eq:e4}
		\begin{aligned}
			\EE\biggl[\Bigl(\int_0^{t\wedge \rho} \bigl(\Bb(s)- \Bt(s)\bigr)\,\dd W(s)\Bigr)^2\biggr]
				&= \EE\biggl[ \int_0^{t\wedge\rho} \bigl|\Bb(s)- \Bt(s)\bigr|^2\,\dd s\biggr] \\
			&\leq C^2 \int_0^t \EE\Bigl[ \bigl|X(s\wedge\rho)-\Xt(s\wedge\rho)\bigr|^2 \Bigr]\,\dd s.
		\end{aligned}
	\end{align}

	Define a bounded and Borel-measurable function $f\colon[0,T]\to[0,\infty)$ by $f(t)=\EE[|X(t\wedge \rho)-\Xt(t\wedge\rho)|^2]$.
	Using \eqref{eq:e2}, \eqref{eq:e3}, and \eqref{eq:e4} we conclude that for all $t\in[0,T]$,
	\[
	f(t) \leq 2\,(TC^2+C^2)\int_0^t f(s)\,\dd s.
	\] 
	Hence $f=0$ by the Gronwall inequality. This proves the first part of the lemma.

	For the second part, let $f,g\colon [0,T]\to\R$ be continuous functions, let
	\begin{align*}
		\tau_0 = \inf\{t\in[0,T]\colon f(t) \not\in I\} \wedge\inf\{t\in[0,T]\colon g(t) \not\in I\} \wedge T,
	\end{align*}
	and assume that for all $t\in[0,T]$
	\begin{align*}
		f(t\wedge\tau_0) = g(t\wedge \tau_0) \quad\text{and}\quad f(t)\in I.
	\end{align*}
	If $\tau_0<T$, then the continuity of $g$ and openness of $I$ imply $g(\tau_0)\not\in I$
	and hence $f(\tau_0)=g(\tau_0)\not\in I$, which is a contradiction.
	This shows $\tau_0=T$ and hence $f=g$.
\end{proof}

Next, we provide sufficient conditions for an It\^{o} process to stay (over time)
in an open interval with positive probability.
The following lemma is a consequence of the Girsanov theorem and the method of time change.

\begin{lem}\label{lem:time-change}
	Assume the setting in Section~\ref{sec:setting}.
	Let $\alpha\colon[0,\infty)\times\Omega\to\R$ and $\beta\colon[0,\infty)\times\Omega\to\R$ be bounded
	processes that are progressively measurable with respect to $(\Fc_{t})_{t\in [0,\infty)}$
	such that $\beta$ is bounded away from zero.
	Let $I\subseteq\R$ be an open interval such that $\PP(X(0)\in I)>0$.
	Then we have for all $S\in(0,\infty)$ that
	\[
	\PP\Bigl(\forall\,t\in[0,S]\colon \Big[X(0)+\int_0^t\alpha(s)\,\dd s + \int_0^t\beta(s)\,\dd W(s)\Big] \in I\Bigr) > 0.
	\]
\end{lem}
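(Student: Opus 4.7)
The plan is to first reduce the claim to a small-ball-type event, then apply Girsanov's theorem to eliminate the drift, and finally invoke the Dambis-Dubins-Schwarz (DDS) time change together with the small-ball property of Brownian motion. First I would discard the trivial case $I=\R$ and use that $I$ is open with $\PP(X(0)\in I)>0$ to fix a compact subinterval $K\subseteq I$ with $\PP(X(0)\in K)>0$ and a $\delta\in(0,\infty)$ such that every real number within distance $\delta$ of $K$ still lies in $I$. Writing $Y(t)=X(0)+\int_0^t\alpha(s)\,\dd s+\int_0^t\beta(s)\,\dd W(s)$ and $M(t)=Y(t)-X(0)$, it then suffices to show that $\PP\bigl(\{X(0)\in K\}\cap\{\sup_{t\in[0,S]}|M(t)|<\delta\}\bigr)>0$.

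To remove the drift I would apply Girsanov's theorem. Since $\alpha$ is bounded and $\beta$ is bounded away from zero, $\alpha/\beta$ is bounded and Novikov's condition holds, so one obtains a probability measure $\mathbb{Q}$ on $\Fc_S$, equivalent to $\PP$, under which $W^{\mathbb{Q}}(t):=W(t)+\int_0^t\alpha(s)/\beta(s)\,\dd s$, $t\in[0,S]$, is an $(\Fc_t)$-Brownian motion. Under $\mathbb{Q}$ the process $M(t)=\int_0^t\beta(s)\,\dd W^{\mathbb{Q}}(s)$ is a continuous $(\Fc_t)$-martingale with quadratic variation $\tau(t):=\int_0^t\beta(s)^2\,\dd s\leq C^2 t$ for a constant $C\in(0,\infty)$. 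Because $\mathbb{Q}\sim\PP$ preserves positivity of events and $\mathbb{Q}$ agrees with $\PP$ on $\Fc_0$, it is enough to show that $\mathbb{Q}\bigl(\{X(0)\in K\}\cap\{\sup_{t\in[0,S]}|M(t)|<\delta\}\bigr)>0$.

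Next I would apply DDS to $M$ under $\mathbb{Q}$. After possibly enlarging the probability space to extend the resulting Brownian motion beyond the random time $\tau(S)$ by an independent piece, one obtains a standard Brownian motion $B=(B(u))_{u\in[0,C^2 S]}$ with $M(t)=B(\tau(t))$ for all $t\in[0,S]$. The strict positivity of $\beta^2$ forces $\tau$ to be strictly increasing with $\tau^{-1}(0)=0$, so $B$ is a Brownian motion with respect to a time-changed filtration whose initial $\sigma$-algebra equals $\Fc_0$. Each increment of $B$ is therefore independent of $\Fc_0$, which then forces the entire process $B$ to be independent of $\Fc_0$ under $\mathbb{Q}$; in particular $B$ is independent of $X(0)$.

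Finally, since $\tau(S)\leq C^2 S$ one has $\sup_{t\in[0,S]}|M(t)|\leq\sup_{u\in[0,C^2 S]}|B(u)|$, and combining the standard small-ball estimate $\mathbb{Q}(\sup_{u\in[0,C^2 S]}|B(u)|<\delta)>0$ with the independence of $B$ and $X(0)$ yields $\mathbb{Q}\bigl(\{X(0)\in K\}\cap\{\sup_{t\in[0,S]}|M(t)|<\delta\}\bigr)\geq \PP(X(0)\in K)\cdot\mathbb{Q}(\sup_{u\in[0,C^2 S]}|B(u)|<\delta)>0$, completing the proof. I expect the main obstacle to be the careful setup of the DDS time change: verifying that the DDS Brownian motion $B$ is genuinely independent of $\Fc_0$ (the key point being that strict positivity of $\beta^2$ implies $\tau^{-1}(0)=0$) and handling the mild enlargement of the probability space needed to realise $B$ on the deterministic interval $[0,C^2 S]$.
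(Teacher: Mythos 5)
Your proof is correct, and it uses the same two key tools as the paper (Girsanov's theorem and the Dambis--Dubins--Schwarz time change) but applies them in the opposite order. The paper first performs the DDS time change $Z(t)=\int_0^t\beta^2(s)\,\dd s$ to turn the stochastic integral into a Brownian motion $B$ in the new clock, rewrites the drift integral via a change of variables as $\int_0^t \alpha(Z^{-1}(s))/\beta^2(Z^{-1}(s))\,\dd s$, and only then applies Girsanov to remove this transformed drift; it finishes by noting $Z^{-1}(t)\geq c\,t$ so that the time-changed interval covers an original interval $[0,cS]$ (with $S$ arbitrary). You instead apply Girsanov first, which kills the drift in the original time scale, and then use DDS on the driftless martingale together with a small-ball estimate; this buys you a cleaner reduction (no change-of-variables identity for the Lebesgue integral and no final time-covering step), at the price of having to enlarge the probability space to realise the DDS Brownian motion on the deterministic interval $[0,C^2S]$ and to argue independence from $\Fc_0$. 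On that last point, be slightly careful with the phrasing: independence of each individual increment from $\Fc_0$ does not by itself yield independence of the whole process; the correct argument is that each increment $B(u_{k})-B(u_{k-1})$ is independent of $\Gc_{u_{k-1}}$, which contains both $\Fc_0$ and all earlier increments, and one then inducts over finite-dimensional distributions. This is standard (and the paper's own final step tacitly relies on the same fact when it asserts $\widetilde\PP(\forall t\colon X(0)+\widetilde B(t)\in I)>0$), so it is a matter of precision rather than a gap.
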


\begin{proof}
By the properties of $\beta$, the process $Z\colon[0,\infty)\times\Omega\to [0,\infty)$ defined by
	\begin{align*}
		Z(t) = \int_0^t \beta^2(s)\,\dd s
	\end{align*}
	is strictly increasing
	as well as (Lipschitz) continuous and satisfies $Z(0)=0$ as well as $\lim_{t\to\infty}Z(t)=\infty$.
	Let $Z^{-1}$ denote the inverse of $Z$. Then  $Z^{-1}(t)$ is a stopping time for every $t\in [0,\infty)$ and by the Dambis/Dubins-Schwarz theorem, see, e.g., \citet*[Theorem~3.4.6]{KS91}, we get that the process $B\colon[0,\infty)\times\Omega\to\R$ defined by
	\begin{align}\label{eq:def-B}
		B(t)=\int_0^{Z^{-1}(t)} \beta(s)\,\dd W(s)
	\end{align}
	is a standard Brownian motion with respect to the normal filtration $(\Gc_t)_{t\in [0,\infty)}=(\Fc_{Z^{-1}(t)})_{t\in [0,\infty)}$.
	
Let $\omega\in\Omega$. Then $Z(\omega,\cdot)\colon [0,\infty) \to [0,\infty)$ is absolutely continuous with $Z(\omega,\cdot)'=\beta^2(\omega,\cdot) >0$ Lebesgue almost everywhere.	It follows that the inverse $Z^{-1}(\omega,\cdot)\colon [0,\infty)\to [0,\infty)$ is absolutely continuous with $(Z^{-1}(\omega,\cdot))' = 1/(\beta^2(\omega,\cdot)\circ Z^{-1}(\omega,\cdot))$
Lebesgue almost everywhere, see, e.g., \citet*[Exercises~5.8.51,~5.8.52]{Bog2007}. Hence, by the change of variable formula we have 
	for every $t\in[0,\infty)$ that
	\begin{align}\label{eq:substit}
		\int_0^{t} \alpha(\omega,s)\,\dd s
			= \int_0^{Z(t)} \frac{\alpha\bigl(\omega,Z^{-1}(\omega,s)\bigr)}{\beta^2\bigl(\omega,Z^{-1}(\omega,s)\bigr)}\,\dd s.
	\end{align}
Combining  \eqref{eq:def-B} and \eqref{eq:substit} yields that for every $t\in[0,\infty)$ we have
	\begin{align}\label{eq:time-change}
		X(0) + \int_0^{Z^{-1}(t)}\alpha(s)\,\dd s + \int_0^{Z^{-1}(t)}\beta(s)\,\dd W(s)
			= X(0) + \int_0^t \frac{\alpha\bigl(Z^{-1}(s)\bigr)}{\beta^2\bigl(Z^{-1}(s)\bigr)}\,\dd s + B(t).
	\end{align}
Since the processes $\alpha$ and $\beta$ are progressively measurable with respect to $(\Fc_t)_{t\in [0,\infty)}$ and the process $Z^{-1}$ is measurable, we get that the process $\gamma=\bigl(-\alpha(Z^{-1}(s))/\beta^2(Z^{-1}(s))\bigr)_{s\in [0,\infty)}$
	is measurable and adapted with respect to $(\Gc_t)_{t\in [0,\infty)}$. Moreover, by the boundedness properties of $\alpha$ and $\beta$, the process $\gamma$ 	satisfies the Novikov condition, 	see, e.g., \citet*[Corollary~3.5.13]{KS91}. Fix $S\in (0,\infty)$.
	Then, by the Girsanov theorem, see, e.g., \citet*[Theorem~3.5.1]{KS91}, the mapping
	\[
	\widetilde\PP\colon \Gc_{S}\to [0,1], \quad
		A\mapsto \EE\biggl[ \ind_A\cdot \exp\Bigl(
		\int_0^{S}\gamma(t)\,\dd B(t)-\frac{1}{2}\int_0^{S}\gamma^2(t)\,\dd t\Bigr)\biggr]
	\] 
	is a probability measure on $\Gc_{S}$, which is equivalent to $\PP\vert_{\Gc_{S}}$,
	and the process $\Bt\colon[0,S]\times\Omega\to\R$ given by 
	\[
	\Bt(t) = B(t)- \int_0^t \gamma(s)\,\dd s
	\]
	is a $(\Gc_t)_{t\in[0,S]}$-adapted Brownian motion on $[0,S]$ with respect to $\widetilde\PP$.
	By~\eqref{eq:time-change} we then obtain that
	\begin{align*}
	0&<\widetilde\PP\Bigl(\forall\,t\in[0,S]\colon \Big[X(0)+\Bt(t) \Big] \in I\Bigr) \\
		& = \widetilde\PP\Bigl(\forall\,t\in[0,S]\colon \Big[X(0)+\int_0^{Z^{-1}(t)}\alpha(s)\,\dd s + \int_0^{Z^{-1}(t)}\beta(s)\,\dd W(s) \Big]\in I \Bigr).
	\end{align*}
	Hence it holds that
		\begin{align*}
		0&<
\PP\Bigl(\forall\,t\in[0,S]\colon \Big[X(0)+\int_0^{Z^{-1}(t)}\alpha(s)\,\dd s + \int_0^{Z^{-1}(t)}\beta(s)\,\dd W(s) \Big]\in I \Bigr).
	\end{align*}
	Combining this with the fact that there exists $c\in (0,\infty)$ such that $Z^{-1}(t)\ge c\,t$ for every $t\in [0,\infty)$ completes the proof of the lemma.
\end{proof}

The following proposition establishes a comparison result for SDEs.
It is a key tool to relax global assumptions on the coefficients in order to obtain lower error bounds for SDEs under
non-standard assumptions on the coefficients.

\begin{prop}[Comparison result]\label{prop:localization}
	Assume the setting in Section~\ref{sec:setting}. Let $\at\colon[0,T]\times\R\to\R$
	and $\bt\colon[0,T]\times\R\to\R$ be bounded and Borel-measurable functions such that
	\begin{align}\label{eq:ass1}
		\inf_{(t,x)\in [0,T]\times \R} |\bt(t,x)| > 0
	\end{align}
	and let $C\in [0,\infty)$ be a constant such that for all $t\in[0,T]$ and $x,y\in\R$ it holds that
	\begin{align*}
		|\at(t,x)-\at(t,y)| + |\bt(t,x)-\bt(t,y)| \leq C\,|x-y|.
	\end{align*}
	Moreover, let $I\subseteq\R$ be an open interval such that
	\begin{align*}
		\PP\big(X(0)\in I\big)>0
	\end{align*}
	and for all $t\in [0,T]$ and $x\in I$ it holds that
	\begin{align*}
		a(t,x)=\at(t,x),\qquad b(t,x)=\bt(t,x).
	\end{align*}
	Assume further that $\Xb\colon [0,T]\times\Omega\to\R$ is an $(\Fc_t)_{t\in [0,T]}$-adapted stochastic process
	with continuous sample paths such that
	\begin{align}\label{eq:ass2}
		\PP\big( \{X(0)=\Xb(0)\} \cap \{X(0)\in I\} ) = \PP\big(X(0)\in I\big)
	\end{align}
	and for all $t\in[0,T]$ it holds $\PP$-a.s.~that 
	\begin{align*}
		\Xb(t) = \Xb(0) + \int_0^t \at\bigl(s,\Xb(s)\bigr)\,\dd s + \int_0^t \bt\bigl(s,\Xb(s)\bigr)\,\dd W(s).
	\end{align*}
	Then we have
	\begin{align*}
		\PP\big(\forall\,t\in[0,T]\colon \Xb(t)=X(t)\big)>0.
	\end{align*}
\end{prop}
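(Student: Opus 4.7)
The plan is to introduce an auxiliary process $Y$ satisfying the $(\at,\bt)$-SDE with the same initial value $X(0)$ as the target SDE, use Lemma~\ref{lem:time-change} to ensure $Y$ stays in $I$ with positive probability, apply Lemma~\ref{lem:localization} to identify $Y$ with $X$ on that event, and finally transfer the conclusion from $Y$ to $\Xb$ via pathwise uniqueness for the $(\at,\bt)$-SDE. Since $\at,\bt$ are bounded (hence of linear growth) and globally Lipschitz on $\R$, standard strong existence theory yields an $(\Fc_t)_{t\in[0,T]}$-adapted continuous process $Y\colon[0,T]\times\Omega\to\R$ with $Y(0)=X(0)$ that $\PP$-a.s.\ satisfies, for every $t\in[0,T]$,
\begin{equation*}
    Y(t) = X(0) + \int_0^t \at(s,Y(s))\,\dd s + \int_0^t \bt(s,Y(s))\,\dd W(s).
\end{equation*}

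The processes $\alpha(s)=\at(s,Y(s))$ and $\beta(s)=\bt(s,Y(s))$ are progressively measurable and bounded, and $\beta$ is bounded away from zero by~\eqref{eq:ass1}. Lemma~\ref{lem:time-change} (with $S=T$) therefore gives
\begin{equation*}
    \PP(\forall\,t\in[0,T]\colon Y(t)\in I) > 0.
\end{equation*}
Lemma~\ref{lem:localization} applies to $Y$ in place of $\Xt$---its global linear-growth and Lipschitz-on-$I$ hypotheses follow from global boundedness and Lipschitzianity of $\at,\bt$, and the coincidence of $(\at,\bt)$ with $(a,b)$ on $[0,T]\times I$ is assumed---and yields
\begin{equation*}
    \PP\bigl(\{\forall\,t\in[0,T]\colon Y(t)=X(t)\}\cap\{\forall\,t\in[0,T]\colon Y(t)\in I\}\bigr)
        = \PP(\forall\,t\in[0,T]\colon Y(t)\in I) > 0.
\end{equation*}

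Finally, both $Y$ and $\Xb$ solve the $(\at,\bt)$-integral equation, with possibly distinct initial values $X(0)$ and $\Xb(0)$; the global Lipschitz property of $\at,\bt$ makes a standard It\^o-isometry/Gronwall argument (localized along $\sigma_n=\inf\{t\in[0,T]\colon|Y(t)|\vee|\Xb(t)|\geq n\}\wedge T$, analogous to the proof of Lemma~\ref{lem:localization}) show $\{Y(0)=\Xb(0)\}\subseteq\{\forall\,t\in[0,T]\colon Y(t)=\Xb(t)\}$ up to a $\PP$-null set. Hypothesis~\eqref{eq:ass2} combined with $Y(0)=X(0)$ gives $\{X(0)\in I\}\subseteq\{Y(0)=\Xb(0)\}$ modulo a null set, and because $\{\forall t\colon Y(t)\in I\}\subseteq\{Y(0)\in I\}=\{X(0)\in I\}$, chaining the three identities produces $\PP(\forall\,t\in[0,T]\colon \Xb(t)=X(t))\geq\PP(\forall\,t\in[0,T]\colon Y(t)\in I)>0$. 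The main obstacle is precisely this bookkeeping of initial values: Lemma~\ref{lem:time-change} and Lemma~\ref{lem:localization} are only available for the process starting from $X(0)$, whereas the proposition concerns $\Xb$ whose initial value agrees with $X(0)$ only on $\{X(0)\in I\}$; the auxiliary process $Y$ together with the final uniqueness step is what bridges this discrepancy.
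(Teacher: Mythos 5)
Your proposal is correct and follows essentially the same route as the paper's proof: introduce the auxiliary solution of the $(\at,\bt)$-equation started at $X(0)$, keep it inside $I$ with positive probability via Lemma~\ref{lem:time-change}, identify it with $X$ there via Lemma~\ref{lem:localization}, and transfer to $\Xb$ by pathwise uniqueness together with~\eqref{eq:ass2}. The only detail you gloss over is that Lemma~\ref{lem:time-change} requires $\alpha,\beta$ to be defined, bounded, and (for $\beta$) bounded away from zero on all of $[0,\infty)$, so one must first extend the coefficients and the auxiliary process beyond time $T$ --- e.g.\ by setting $\ah=0$ and $\bh=1$ for $t>T$ --- exactly as the paper does before invoking that lemma.
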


\begin{proof}
	Let $\Xt\colon[0,T]\times\Omega\to\R$ be an $(\Fc_t)_{t\in [0,T]}$-adapted stochastic process
	with continuous sample paths such that for all $t\in [0,T]$ it holds $\PP$-a.s.~that 
	\begin{align*}
		\Xt(t) = X(0) + \int_0^t \at\bigl(s,\Xt(s)\bigr)\,\dd s + \int_0^t \bt\bigl(s,\Xt(s)\bigr)\,\dd W(s).
	\end{align*}
	Define $\ah\colon[0,\infty)\times\R\to\R$ and $\bh\colon[0,\infty)\times\R\to\R$ by
	\begin{align*}
		\ah(t,x) =
		\begin{cases}
			\at(t,x),  &\text{if }t\in [0,T],\\
			0, &\text{else},
		\end{cases}\qquad
		\bh(t,x) =
		\begin{cases}
			\bt(t,x), &\text{if }t\in [0,T],\\
			1, &\text{else}.
		\end{cases}
	\end{align*}
	Observe that $\ah$ and $\bh$ are Borel-measurable and bounded. Moreover, \eqref{eq:ass1} yields
	\begin{align*}
		\inf_{(t,x)\in [0,\infty)\times \R} |\bh(t,x)| > 0.
	\end{align*}
	Define the process $\Xhat\colon[0,\infty)\times\Omega\to\R$ by
	\begin{align*}
		\Xhat(t) =
		\begin{cases}
			\Xt(t), &\text{if }t\in [0,T],\\
			W(t)-W(T)+\Xt(T), &\text{else}.
		\end{cases}
	\end{align*}
	Observe that $\Xhat$ is an $(\Fc_t)_{t\in [0,\infty)}$-adapted stochastic process
	with continuous sample paths and for all $t\in[0,\infty)$ it holds $\PP$-a.s.~that 
	\begin{align*}
		\Xhat(t) = X(0) + \int_0^t \ah\bigl(s,\Xhat(s)\bigr)\,\dd s + \int_0^t \bh\bigl(s,\Xhat(s)\bigr)\,\dd W(s).
	\end{align*}
	We may thus apply Lemma~\ref{lem:time-change} with $\alpha(t)=\ah(t,\Xhat(t))$, $\beta(t)=\bh(t,\Xhat(t))$ and $S=T$ to obtain
	\begin{align*}
		\PP\bigl( \forall\,t\in[0,T]\colon \Xhat(t)\in I \bigr) > 0,
	\end{align*}
	and hence
	\begin{align}\label{eq:5}
		\PP\bigl( \forall\,t\in[0,T]\colon \Xt(t)\in I \bigr) > 0.
	\end{align}
	Combining \eqref{eq:ass2} with the facts $\PP\bigl(\Xt(0)=X(0)\bigr)=1$ and
	\begin{align*}
		\PP\big( \forall\,t\in[0,T]\colon \Xt(t)=\Xb(t) \big) = \PP\big( \Xt(0)=\Xb(0) \big)
	\end{align*}
	yields
	\begin{align}\label{eq:6}
		\PP\bigl( \{\forall\,t\in[0,T]\colon \Xt(t)=\Xb(t)\} \cap \{\forall\,t\in[0,T]\colon \Xt(t)\in I\} \bigr)
			= \PP\bigl( \forall\,t\in[0,T]\colon \Xt(t)\in I \bigr).
	\end{align}
	Lemma~\ref{lem:localization} shows that
	\begin{align*}
		\PP\bigl( \{\forall\,t\in[0,T]\colon \Xt(t)=X(t)\} \cap \{\forall\,t\in[0,T]\colon \Xt(t)\in I\} \bigr)
			= \PP\bigl( \forall\,t\in[0,T]\colon \Xt(t)\in I \bigr).
	\end{align*}
	Combining this with \eqref{eq:6} yields
	\begin{align*}
		\PP\bigl( \{\forall\,t\in[0,T]\colon \Xb(t)=X(t)\} \cap \{\forall\,t\in[0,T]\colon \Xt(t)\in I\} \bigr)
			= \PP\bigl( \forall\,t\in[0,T]\colon \Xt(t)\in I \bigr).
	\end{align*}
	Combining this with \eqref{eq:5} yields the claim.
\end{proof}

\section*{Acknowledgment}

Mario Hefter is supported by the Austrian Science Fund (FWF), Project F5506-N26,
which is part of the Special Research Program
``Quasi-Monte Carlo Methods: Theory and Applications''.

\bibliographystyle{plainnat}
\bibliography{bib}

\end{document}